\theoremstyle{theorem}
\newtheorem{thm}{Theorem}[section]
\newtheorem{lem}[thm]{Lemma}
\newtheorem{prop}[thm]{Proposition}
\theoremstyle{remark}
\theoremstyle{definition}
\numberwithin{equation}{section}
\def\P{{\mathbb{P}}}
\def\R{{\mathbb{R}}}
\newcommand{\EE}{\mathbb{E}}
\newcommand{\PP}{\mathbb{P}}
\newcommand{\E}{\mathbb{E}}
\newcommand{\G}{\mathcal{G}}
\newcommand{\X}{\mathcal{X}}
\newcommand{\Z}{\mathbb{Z}}
\renewcommand{\P}{\mathbb{P}}
\newcommand{\N}{\mathbb{N}}
\newcommand{\F}{\mathcal{F}}
\newcommand{\eps}{\varepsilon}
\newcommand{\8}{\infty}
\renewcommand{\a}{\alpha}
\renewcommand{\b}{\beta}
\newcommand{\wZ}{\widetilde Z}
\newcommand{\wY}{\widetilde Y}
\newcommand{\W}{{\mathbb W }}
\newcommand{\Wbu}{{\mathbb W }^{\uparrow}}
\newcommand{\Wbd}{{\mathbb W }^{\downarrow}}
\newcommand{\Wu}{{ W }^{\uparrow}}
\newcommand{\Wd}{{ W }^{\downarrow}}
\newcommand{\wt}{\widetilde}
\renewcommand{\o}{\omega}
\begin{document}

	\title[Large deviations for RWRE]{Precise large deviations for random walk in random environment\footnote{\today}}

	\author[D.~Buraczewski, P.~Dyszewski]{Dariusz Buraczewski, Piotr Dyszewski}

	\address{Instytut Matematyczny, Uniwersytet Wroclawski, Plac Grunwaldzki 2/4, 50-384 Wroclaw, Poland}
	\email{dbura@math.uni.wroc.pl, pdysz@math.uni.wroc.pl}

	\thanks{The authors  were partially supported by the National Science Center, Poland (Sonata Bis, grant number DEC-2014/14/E/ST1/00588)}

	\keywords{Random walk in random environment, large deviations, branching process with immigration}
	\subjclass[2010]{60K37, 60J10}

	\begin{abstract}
		We study one-dimensional nearest neighbour random walk in site-random environment. We establish precise (sharp) large deviations
		in the so-called ballistic regime, when the random walk drifts to the right with linear speed. In the sub-ballistic regime, when the speed is 
		sublinear, we describe the precise probability of slowdown.
	\end{abstract}

	\maketitle

\section{Introduction}

\subsection{ Random walk in random environment}
	Throughout this article we will be interested in some asymptotic properties of nearest neighbour random walk in
	site-dependent random medium. Starting from the early work of Solomon~\cite{solomon1975random}, this model has attracted a lot of attention over the past few years since,
	apart from motivations originated in physics, it exhibits a lot of features not
	observed in the classical random walk. We refer to the notes of Zeitouni~\cite{zeitouni2004part} for an introduction to the topic.

	The main contribution of this article is an extension of large deviation results obtained previously by Dembo, Peres and Zeitouni~\cite{dembo1996tail} to precise (rather than logarithmic) 	
	asymptotic of the deviations. We establish also precise probability of slowdown, when the speed of the random walk is sublinear, improving thus the result of Fribergh, Gantert and Popov~\cite{fribergh2010slowdown}.
For a precise set-up, let $\Omega = (0,1)^\Z$ be the set of all possible configurations of the environment and let $\F$ be the
	$\sigma$-algebra generated by the cylindrical  subsets of the product space $\Omega$. An  environment is an element
	$\o = (\o_n)_{n\in \Z}$ of the measurable space $(\Omega,\F)$. By $P$ we denote a probability distribution on
	$(\Omega, \F)$. Once the environment $\o$ is chosen with respect to $P$ it remains fixed and determines the transition
	kernel of a random walk starting at point $0$. Denote the set of trajectories by $\X = \Z^\N$ and let $\G$ be
	the corresponding $\sigma$-algebra.
	A quenched (fixed) environment $\o$ provides us with a random probability measure $\P_\o$ on $\X$, such that $\P_\o (X_0=0)=1$ and
	$$
		\P_\o(X_{n+1}=j| X_n = i) = \left\{
			\begin{array}{cl}
			\o_i & \mbox{if } j=i+1,\\
			1- \o_i\ & \mbox{if } j=i-1,\\
			0 & \mbox{otherwise.}
		\end{array}\right.
	$$
	Then $X=(X_n)_{n\ge 0}$ is a Markov chain on $\Z$ (with respect to $\P_\o$), called random walk in  random
	environment $\o$ (RWRE).\medskip

	In the context of RWRE one can distinguish two equally valid aspects, that is quenched and annealed behaviour. The former refers to phenomena encountered with respect to
	$\P_\o$ for almost all (a.a.) $\o$. The latter, which is our main focus here, is with respect to the annealed probability, that is the average of $\P_\o$ over $\o$.
	Formally, we define the annealed probability $\P$ as follows. By monotone class theorem, one can verify the measurability of the map $\o \mapsto \P_\o(G)$ for any $G \in \G$.
	This allows us define the mentioned  annealed probability measure $\P$ on $(\Omega\times \X, \F\otimes \G)$, which is a semi-direct product $\P = P \ltimes \P_\o$ given by
	$$
		\P(F\times G) = \int_F \P_\o(G) P(d\o),\qquad F\in \F, G\in \G.
	$$
	Note that $X$ does not form a Markov Chain under the annealed measure $\P$ since, loosely speaking,  the process $X$
	"learns" the environment as it traverses $\Z$.  Thought this article we will assume a particular structure of the environment, namely
	that the measure $P$ on $\Omega$ is chosen is such a way that $\o = (\o_n)_{n \in \Z}$ forms a sequence of independent
	identically distributed (iid) random variables.

\medskip

	One natural question regarding the behaviour of $X$ concerns limit theorems analogous to those treating classical random walk. Obviously one has to take the random environment
	into account. To quantify it, consider the random variables
	$$
		A_n = \frac{1-\o_n}{\o_n}, \quad n \in \Z.
	$$
	This sequence will play a crucial role in what follows, since $A_n$'s are the means of a reproduction laws of a branching process associated with $X$ (see Section~\ref{sec:brrei} for details).
	Solomon~\cite{solomon1975random} proved that the process $X$ is $\o$ a.s. transient if and only if $\E\log A\not = 0$. Here we
	are interested in the transient case when
	\begin{equation} \label{eq:trans}
		\E\log A<0
	\end{equation}
	 and then, since the environment prefers a jump to the right,
	$\lim_{n\to\8} X_n = +\8$ $\P$ a.s. Solomon~\cite{solomon1975random} proved also the law of large numbers, that is $\PP$ a.s.
	\begin{equation}\label{eq:LLN}
		\lim_{n\to\8} \frac{X_n}n= v .
	\end{equation}
	It is known that the limit $v$ is constant $\P$ a.s. and that one can distinguish two regimes
	\begin{enumerate}
		\item \label{fast} ballistic regime ($\E A <1$), when $v = \frac{1-\E A}{1+\E A}$,
		\item \label{slow} sub-ballistic regime ($\E A \geq 1$),  when $v=0$.
	\end{enumerate}
	The first order asymptotic of $X$ in the recurrent case was investigated by Sinai~\cite{sinai1982limit} with a weak limit identified by Kesten~\cite{kesten1986limit}.
	The central limit theorem corresponding to~\eqref{eq:LLN} was proved by Kesten, Kozlov and Spitzer~\cite{kesten1975limit} yielding a weak convergence of
	$$
		\frac{X_n-vn}{a_n(\alpha)}.
	$$
	The limiting distribution as well as the appropriate normalization $a_n(\alpha)$ are related to the value of a parameter $\alpha>0$, for which 	
	\begin{equation}\label{eq:kesten}
		\E A^{\a} = 1.
	\end{equation}
	Note that the above condition for $\a>1$ implies ballisticity.

\subsection{ The ballistic regime}	The aim of this article is to investigate large deviations corresponding to the convergence~\eqref{eq:LLN}. This problem already attracted some attention in the probabilistic community
	resulting in works of  Dembo et. al~\cite{dembo1996tail}, Pisztora, Povel and Zeitouni~\cite{pisztora1999precise} and Varadhan~\cite{varadhan2003large}. However all  mentioned
	articles deliver asymptomatic of the logarithm of probability of a large deviation. Our aim is to sharpen some of this results and deliver a (precise) asymptotic of probability of a large
	deviation.

	The quenched behaviour, which is not of our interest here, also accumulated a fair amount of literature devoted to it. This resulted in the works of
	Greven and den Hollander~\cite{greven1994large}, Gantert and Zeitouni~\cite{gantert1998quenched}, Comets, Gantert and Zeitouni~\cite{comets2000quenched} and
	Zerner~\cite{zerner1998lyapounov}.
	In spite of the time that had passed since the work of  Solomon~\cite{solomon1975random}, RWRE sill attract a lot of attention in the literature as seen from the research of
	 Dolgopyat and Goldsheid~\cite{dolgopyat2012quenched}, Peterson, Jonathon and Samorodnitsky \cite{peterson2013weak},
	 Bouchet, Sabot and dos Santos~\cite{bouchet2016quenched}.

\medskip

	In this paper we consider large deviations of $\frac{X_n}n$ in the ballistic regime
    and aim to describe asymptotic behaviour of
	$\P(X_n - vn < -x)$ as $n,x \to \infty$. We assume only that $\P[A>1]>0$ which, excluding some degenerate cases,
	entails \eqref{eq:kesten} for some $\alpha >0$. In regime~\eqref{fast} this problem was considered by Dembo et al.~\cite{dembo1996tail} where it was established
	that the probability of a deviation is subexponential.

	\begin{lem}[Dembo, Peres, Zeitouni \cite{dembo1996tail}]\label{lem: DPZ}
		Assume that $A$ is bounded a.s., $\P(A =1)<1$ and that~\eqref{eq:kesten} is satisfied for some $\a>1$.
		Then for any open $G\subset (0,v)$ separated from $v$,
		$$
			\lim_{n\to\8}\frac{\log \P(n^{-1}X_n \in G)}{\log n} = 1-\a.
		$$
	\end{lem}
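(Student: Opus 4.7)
The plan is to reduce the slowdown event $\{n^{-1}X_n\in G\}$ to a right-tail question for the hitting time $T_m = \min\{n\ge 0\colon X_n=m\}$, and then exploit the branching-process-with-immigration representation alluded to in Section~\ref{sec:brrei}. Since the walk is nearest-neighbour, for fixed $u\in(0,v)$ the inclusion $\{n^{-1}X_n<u\}\subset \{T_{\lceil un\rceil}>n\}$ holds up to a bounded correction, so it suffices to estimate $\P(T_m>m/u)$. The classical Kesten--Kozlov--Spitzer decomposition gives
$$
T_m = m+2\sum_{k=0}^{m-1}U_k,
$$
where $(U_k)$ is a branching process in the environment $\o$ with one immigrant per generation and offspring means $A_k$. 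In regime~\eqref{fast} we have $\E A<1$, so $(U_k)$ admits a unique stationary law $\pi$, and the ergodic theorem recovers $T_m/m\to 1/v$. By Kesten's theorem applied to the random recursion driving $U_k$, condition~\eqref{eq:kesten} with $\a>1$ forces $\pi$ to be heavy-tailed: $\pi([x,\8))\sim c\, x^{-\a}$ as $x\to\8$.

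With these ingredients, the proof is governed by the single-big-jump heuristic for heavy-tailed sums,
$$
\P\Bigl(\sum_{k=0}^{m-1}U_k > cm\Bigr)\asymp m\cdot \pi([cm,\8))\asymp m^{1-\a},
$$
which after passing to the logarithm yields the claimed exponent $1-\a$. For the upper bound I would first stationarize $(U_k)$ (the coupling cost is negligible because boundedness of $A$ gives fast mixing of the BPRE), then decompose $\{S_m>cm\}$ according to whether $\max_{k<m}U_k$ exceeds $\eta m$. The piece with a large $U_k$ is controlled by a union bound against the stationary tail and contributes $O(m^{1-\a})$; the piece without a large summand is handled by a Fuk--Nagaev type truncation argument, where the assumed boundedness of $A$ supplies all moments below $\a$ needed to make the truncated sum concentrate.

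For the lower bound I would construct an explicit trap: a block of $L\asymp \log m$ consecutive sites on which the $A_i$'s are atypically close to $\|A\|_\8$. Since $\o$ is i.i.d., such a block appears at any given location with probability $m^{-\a+o(1)}$; there are of order $m$ candidate locations inside $[0,m]$; and conditionally on the trap the walk accumulates a delay of order $m$ with probability bounded away from zero. Finally one covers the open $G\subset(0,v)$ separated from $v$ by finitely many intervals $(0,u)$, $u<v$, which leaves the logarithmic exponent unchanged. The main obstacle is rigorously executing the one-big-jump argument in the presence of the strong dependence built into $(U_k)$: a large excursion at generation $k$ propagates into later generations through the branching dynamics, so both the stationarization step and the Fuk--Nagaev truncation must accommodate this feedback, which is precisely where the boundedness assumption $\|A\|_\8<\8$ becomes essential.
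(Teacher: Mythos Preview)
The paper does not prove Lemma~\ref{lem: DPZ}; it is quoted verbatim from Dembo, Peres and Zeitouni~\cite{dembo1996tail} as background for Theorem~\ref{thm:main1}, which sharpens it from a logarithmic to a precise asymptotic. There is therefore no in-paper proof to compare your proposal against.

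For what it is worth, your outline is broadly in the spirit of~\cite{dembo1996tail} and of the machinery the present paper assembles for the sharper result: reduction to the hitting time $T_m$, the branching representation $T_m=m+2\sum_{i<m}U_i^m$ of Section~\ref{sec:brrei}, and the one-big-jump heuristic driven by a Kesten-type tail. Two points deserve correction. First, the inclusion you state is reversed: one always has $\{T_{\lceil un\rceil}>n\}\subset\{X_n<\lceil un\rceil\}$, while the opposite inclusion fails because of backtracking and is \emph{not} fixed by a ``bounded correction''; in the proof of Theorem~\ref{thm:main1} the paper handles this via the excursion bound $\P(L_j>k)\le C\rho^k$. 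Second, your plan to ``stationarize $(U_k)$'' and then run a Fuk--Nagaev argument glosses over the fact that the chain and its coupling live in the same random environment; both~\cite{dembo1996tail} and this paper avoid that issue by working instead with the regeneration times $\nu_k=\inf\{j>\nu_{k-1}:Z_j=0\}$ and the tail of $\sum_{k<\nu}Z_k$ (Lemma~\ref{lem:kks} here), which yields genuinely i.i.d.\ blocks.
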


	We aim to prove a result treating a precise behaviour of deviations of $X$ rather than logarithmic.

\begin{thm}\label{thm:main1}
	Suppose that~\eqref{eq:kesten} holds for some  $\a>1$, $\PP[A=1]<1$ and that $\E A^{\a+{ \delta}}<\8$ for some  $\delta >0$. Assume additionally that
	the law of $\log A$ is nonarithmetic. Then
	\begin{equation}\label{eq:mthm2}
		\lim_{n\to\8} \sup_{x \in \Gamma_n  }\bigg| \frac{\P(X_n -  vn< - x)}{( vn-x)x^{-\a}}  - \mathcal{C}(\a) \bigg| = 0,
	\end{equation}
	where $\mathcal{C}(\a)>0$ and
	\begin{equation*}
  		\Gamma_n = \left \{
  		\begin{array}{ll}
  			\big(n^{1/\a}(\log n)^M,vn -b_n\big)\quad & \mbox{for $\a \in (1,2]$}\\
  			\big( c_n n^{1/2}\log n, vn-b_n\big) \quad & \mbox{for $\a> 2$}\\
  		\end{array}
  		\right.,
	\end{equation*}
  	where $M>2$, $\varepsilon>0$ and $b_n,c_n \to \infty$ such that $c_n \leq n^{1/2}\log(n)^{-1}$ and $b_n <vn- n^{1/\a} \log(n)^M$ if $\alpha \in (1,2]$ and $b_n < vn-c_n n^{1/2}\log(n)$ if $\alpha>2$.
	In particular, choosing $x=\varepsilon n$, 
	$$
		\lim_{n\to\8} \frac{ \P(X_n < (v-\eps)n)}{n^{1-\a}} = (v-\varepsilon)\varepsilon^{-\alpha}\mathcal{C}(\a).
	$$

\end{thm}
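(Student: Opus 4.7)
The plan is to reduce the problem to the tail of the hitting time $T_N = \inf\{n\geq 0 : X_n = N\}$ and then to analyze $T_N$ through its Kesten--Kozlov--Spitzer representation as a sum associated with a branching process with immigration in random environment (BPIRE). Using the duality $\{X_n < N\} = \{T_N > n\}$ with $N = \lfloor vn - x\rfloor$, together with the classical identity $T_N = N + 2\sum_{i=0}^{N-1} Z_i$, where $Z_i$ counts the number of steps from site $i+1$ to site $i$ taken by the walk before hitting $N$, the event $\{X_n - vn < -x\}$ becomes (after centering) an event of the form $\{S_N - \E S_N > x/(2v)\}$, where $S_N := \sum_{i=0}^{N-1} Z_i$. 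Under $\P$ the sequence $(Z_i)$ is a BPIRE with geometric offspring of mean $A_i$, and the theorem reduces to establishing the precise big-jump asymptotic
\[
\P(S_N - \E S_N > y) \sim N\cdot \P(Z_\infty > y)
\]
uniformly for $y = x/(2v)$ with $x\in \Gamma_n$, where $Z_\infty$ denotes the stationary law of the BPIRE.

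The first analytic input is a Kesten--Goldie-type tail estimate $\P(Z_\infty > t) \sim K_Z t^{-\a}$ as $t\to\infty$. It follows from the fact that the Markov dynamics of $Z_i$ is governed by a random multiplicative recursion roughly of the form $Z_i \approx A_{i-1}(Z_{i-1} + \text{immigration})$, which, under \eqref{eq:kesten} together with $\E A^{\a+\delta}<\infty$ and nonarithmeticity of $\log A$, falls into the implicit renewal framework and delivers a positive constant $K_Z$. The second input is the precise large deviation statement for $S_N$ itself, obtained by a truncation and one-big-jump strategy: split each $Z_i$ at an intermediate threshold $T = y/\log y$, use a Fuk--Nagaev or exponential bound to show that the truncated partial sum concentrates around its mean on the scale $n^{1/\a}(\log n)^M$ for $\a\in(1,2]$, resp.\ $c_n n^{1/2}\log n$ for $\a>2$---which is precisely the lower endpoint of $\Gamma_n$---and then count the single large excursion via the tail $\P(Z_\infty > y)$, multiplying by the number of sites $N$.

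The principal obstacle is that the random variables $Z_i$ are far from independent: they share the environment $\omega$ and are further coupled through the BPIRE dynamics, so classical results for iid heavy-tailed sums do not apply off the shelf. I would circumvent this via a regeneration decomposition of $(Z_i)_{i\geq 0}$ at the successive indices $\tau_k$ where $Z_i = 0$; the blocks $\bigl(Z_{\tau_k},\ldots, Z_{\tau_{k+1}-1}\bigr)$, taken together with the environment restricted to each interval, form an iid sequence under the annealed measure. Under the ballistic assumption $\E A<1$ with $\E A^{\a+\delta}<\infty$, the inter-regeneration gaps have sufficient integrability, and the maximum within a single block inherits the tail $\sim C t^{-\a}$ from $Z_\infty$. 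The problem is thereby recast as a precise large deviation for an iid sum of heavy-tailed block contributions, where the standard Nagaev--Denisov--Dieker--Shneer machinery applies and yields the announced asymptotic with $\mathcal{C}(\a) = (2v)^{\a} K_Z$ after unwinding the substitutions. The range $\Gamma_n$ represents exactly the regime above which a single anomalously large regeneration block dominates over the Gaussian or $\a$-stable bulk fluctuations predicted by the Kesten--Kozlov--Spitzer CLT.
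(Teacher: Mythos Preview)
Your overall architecture---pass to hitting times, rewrite $T_N$ via the BPIRE, and prove a one-big-jump large deviation for the partial sum---is the same scaffolding the paper uses. But there are two concrete gaps, and one substantive divergence from the paper's route.

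First, the set identity $\{X_n < N\} = \{T_N > n\}$ is false: only the inclusion $\{T_N > n\} \subseteq \{X_n < N\}$ holds, since the walk may hit $N$ and then backtrack below $N$ by time $n$. This gives you the lower bound immediately, but for the matching upper bound the paper inserts a logarithmic cushion $k = D\log n$ and controls $\P(X_n < N)$ by $\P(T_{N+k} > n) + \P(L_{N+k} > k)$, where $L_j$ is the longest leftward excursion after first hitting $j$; the latter is $O(\rho^k)$ and hence negligible. Without this step your reduction to $S_N$ only yields one inequality.

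Second, your target asymptotic $\P(S_N - \E S_N > y)\sim N\,\P(Z_\infty > y)$ and the resulting identification $\mathcal{C}(\alpha) = (2v)^\alpha K_Z$ are not correct. Within a regeneration cycle the $Z_i$'s are strongly positively dependent---once one $Z_i$ is large, its descendants keep the process large for many subsequent steps---so the cycle sum $\sum_{k=0}^{\nu-1}Z_k$ has tail constant $\mathcal{C}_3(\alpha)$ that is \emph{not} $\E[\nu]\cdot K_Z$; in fact the paper's Lemma~\ref{lem:kks} expresses $\mathcal{C}_3(\alpha)$ through a conditional moment of $Z$ at first passage. The correct one-big-jump statement is $\P(S_N - \E S_N > y)\sim (N/\E\nu)\,\mathcal{C}_3(\alpha)\,y^{-\alpha}$, giving $\mathcal{C}(\alpha) = (2v)^\alpha \mathcal{C}_3(\alpha)/\E\nu$.

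Finally, the paper explicitly considers your regeneration-block route (see Section~\ref{sec:approach}) and abandons it, remarking that the random number of blocks $N(n)$ makes a rigorous argument ``more complicated than expected.'' Instead of iid cycles of random length, the paper truncates each immigrant's total progeny to a deterministic time window $[j+n_1,j+n_2]$ around the most likely deviation time $n_0 \approx \log x/\Lambda'(\alpha)$, and then groups these truncated contributions into blocks $\W_k$ of deterministic length $n_1$; this produces a $2$-dependent, identically distributed sequence to which the methods of \cite{Buraczewski:Damek:Mikosch:Zienkiewicz:2013} apply directly. Your regeneration approach can in principle be pushed through, but you would need uniform large-deviation bounds for randomly-indexed heavy-tailed sums, and you have not indicated how to handle the fluctuations of $N(n)$ uniformly over $x\in\Gamma_n$.
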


	The constant $\mathcal{C}(\alpha)$ can be represented in the terms of branching process with immigration associated with $X$. We will provide more details in Section~\ref{sec:brrei} and
	Section~\ref{sec:approach} after we present the construction of the process in question and deliver some tools.\\

	In order to prove our main result, we will use the fact that jumps of $X$ have a structure of a branching process with immigration. The problem of large deviations of $X$ will boil down to 	deviations of the total population size of mentioned branching process. This approach was used previously by Dembo et al.~\cite{dembo1996tail} and Kesten et
	al~\cite{kesten1975limit}. Next, since the branching process can be relatively well approximated by the environment, we will be able to determine the most probable moment, when the
	deviation happen. A fortiori, the large deviations of $X$ come from large deviations of the environment, which is a phenomena used by Dembo et al.~\cite{dembo1996tail} and
	Kesten et al.~\cite{kesten1975limit}. The final arguments leading us to Theorem \ref{thm:main1} strongly base on the methods developed by Buraczewski et al. \cite{Buraczewski:Damek:Mikosch:Zienkiewicz:2013}, who considered large deviations results for  partial sums of some stochastic recurrence equation.

\subsection{ The sub-ballistic regime}
	If condition \eqref{eq:kesten} holds for some $\alpha \le 1$, then $X_n/n$ converges to 0 a.s. For $\alpha<1$ the process $\{X_n\}$ is typically at distance of order $O(n^{\alpha})$ from the origin, as follows from \cite{kesten1975limit}. 
	The annealed probability of slowdown was described by Fribergh et al. \cite{fribergh2010slowdown}, who proved that it decays polynomially.

	\begin{lem}[Fribergh, Gantert, Popov \cite{fribergh2010slowdown}]\label{lem: FGP}
		Assume that \eqref{eq:kesten} holds for $\alpha \le 1$, $\E[A^{\alpha}\log^+A]<\infty$
		and $\E[A^{-\delta}]<\infty$ for some $\delta>0$. Then for any $\beta\in(0,\alpha)$
		$$
			\lim_{n\to\infty} \frac{\log \P(X_n < n^{\beta})}{\log n} = \beta-\alpha.
		$$
	\end{lem}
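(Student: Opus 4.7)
The strategy is to reduce the slowdown event to a hitting-time tail and then to use the branching-process-with-immigration representation of the hitting time together with the heavy-tailed stationary distribution of a single generation. Let $T_m = \inf\{k \ge 0 : X_k = m\}$. Since $X_n \to +\8$ $\P$-a.s.\ and $\b < \a$, a short argument (using $\E[A^{-\delta}]<\8$ to rule out excursions deep into the negative half-line) gives $\P(X_n < n^{\b}) = \P(T_{\lceil n^{\b}\rceil} > n)\,(1+o(1))$, and it suffices to prove $\log \P(T_m > n)/\log n \to \b-\a$ for $m=\lceil n^\b\rceil$.

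For the upper bound, I would use the Kesten--Kozlov--Spitzer formula
$$
T_m = m + 2\sum_{k=0}^{m-1} V_k^{(m)},
$$
where $V_k^{(m)}$ counts the right-to-left traversals of the edge $(k,k+1)$ before time $T_m$. Conditionally on $\o$, the reversed sequence $(V_{m-1}^{(m)},\dots,V_0^{(m)})$ is a branching process with immigration in i.i.d.\ environment with offspring means $A_k$, and under the annealed law its stationary version satisfies $\P(V_0^{(\8)} > t)\sim c_0 t^{-\a}$ by a Goldie/Kesten theorem applied to the associated perpetuity; the moment hypotheses $\E[A^\a\log^+ A]<\8$ and $\E[A^{-\delta}]<\8$ are precisely what is needed to produce this asymptotic. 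For $\a\le 1$, a truncated union bound (single-big-jump in the stationary setting) yields $\P(\sum_{k=0}^{m-1} V_k^{(m)} > n)\le C m n^{-\a}(\log n)^{\ind_{\{\a=1\}}}$, whence $\limsup \log \P(X_n<n^\b)/\log n \le \b - \a$.

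For the lower bound, I would exhibit an explicit slowdown trap. Partition $[0,n^\b)$ into $\lfloor n^\b/L\rfloor$ disjoint windows of length $L=\lceil K\log n\rceil$ and declare window $j$ to be a trap if the potential $\sum_{\ell=jL+1}^{\cdot}\log A_\ell$ climbs by more than $\a^{-1}\log n$ before descending; by Cram\'er-type large deviations using \eqref{eq:kesten} this has $P$-probability $\asymp n^{-\a}$. Since distinct windows are $P$-independent, with probability $\asymp n^{\b-\a}$ at least one window is a trap, and a standard valley-confinement estimate shows that conditionally on its presence the walk fails to exit within time $n$ with probability bounded away from $0$, yielding $\liminf \log \P(X_n<n^\b)/\log n \ge \b-\a$.

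The main obstacle is the upper bound: the stationary, non-i.i.d.\ nature of $(V_k^{(m)})_k$ makes the single-big-jump principle nontrivial, and the boundary case $\a=1$ is additionally delicate because $\E V_0^{(\8)}=\8$ and the truncation has to be performed at level $\asymp n$ rather than at a constant. Establishing the Kesten tail of $V_0^{(\8)}$ (not just a crude upper bound) is the other technical point, which is precisely where the integrability of $A^\a\log^+ A$ and $A^{-\delta}$ enters.
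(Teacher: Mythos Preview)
The paper does not give its own proof of this lemma; it is stated with attribution to Fribergh, Gantert and Popov and serves only as the prior logarithmic result that the paper sharpens in Theorem~\ref{thm:main2}. There is therefore no proof in the paper to compare your proposal against.

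For context, the paper's route to its stronger Theorem~\ref{thm:main2} (precise rather than logarithmic asymptotics) does begin with the same reduction you use: pass from $\{X_n < x\}$ to a hitting-time event $\{T_x > n\}$, and express $T_x$ through the total population $W_x$ of the associated branching process with immigration. From there, however, the paper does not split into separate upper and lower arguments. Instead it proves a single two-sided asymptotic $\P(W_m > x)\sim \mathcal{C}_1(\a)\, m\, x^{-\a}$ (Theorem~\ref{thm:wn}) via a block decomposition of $W_m$ into $2$-dependent pieces $\W_k$ of length $n_1\asymp \log x$, a tail estimate $\P(\W_1>x)\sim (\mathcal{C}_3(\a)/\E\nu)\, n_1 x^{-\a}$ obtained from the regeneration structure, and a single-big-jump argument based on the Prokhorov inequality. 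Your upper-bound sketch is in this spirit; your lower bound via an explicit trap/valley construction is the classical Fribergh--Gantert--Popov device, which the paper bypasses entirely because the precise tail of $W_m$ already yields both directions.
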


	Here we obtain a precise asymptotic.

	\begin{thm}\label{thm:main2}
		Suppose that \eqref{eq:kesten} holds for some $\alpha\le 1$ and $\E[A^{\alpha+\delta}]<\infty$ for $\delta >0$. Assume additionally that the law of $\log A$ is nonarithmetic. Then
		\begin{equation}\label{eq:56}
			\lim_{n\to\8} \sup_{x \in \Gamma_n  }\bigg| \frac{\P(X_n<  x)}{ x n^{-\a}}  - \mathcal{C}(\a) \bigg| = 0,
		\end{equation} 
		where $\mathcal{C}(\alpha) > 0$ and $\Gamma_n = (c_n\log n, n^{\alpha}/(\log n)^M)$ for $M>2\alpha$ and $c_n\to\infty$.

		In particular putting $x=n^{\beta}$ for any $\beta\in(0,\alpha)$, we obtain
		$$
			\lim_{n\to\infty} \frac{\P(X_n < n^{\beta})}{n^{\beta-\alpha}} = \mathcal{C}(\alpha).
		$$
	\end{thm}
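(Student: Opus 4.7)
The plan is to follow the branching-process strategy already employed for Theorem~\ref{thm:main1}, adapted to the slowdown regime. First I would pass from $\{X_n<x\}$ to the first-passage time $T_x=\inf\{n\ge0:X_n=x\}$. For a nearest-neighbour walk $\{T_x>n\}\subseteq\{X_n<x\}$, and under \eqref{eq:kesten} with $\alpha\le 1$ the walk is transient to $+\infty$ (albeit slowly), so the gap between these two events contributes only a lower-order error throughout the range $\Gamma_n$. The task then reduces to proving
\begin{equation*}
\P(T_x>n) = \mathcal{C}(\alpha)\, x\, n^{-\alpha}\bigl(1+o(1)\bigr)
\end{equation*}
uniformly for $x\in\Gamma_n$.

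Next, I would use the standard decomposition $T_x = x + 2\sum_{i=0}^{x-1} Z_i$, where $Z_i$ counts the left-jumps from $i+1$ to $i$ before the walk reaches $x$. Indexed backwards, $(Z_{x-1},\ldots,Z_0)$ is a branching process in the random environment $\omega$ with one immigrant per generation and geometric offspring of mean $A_i$, as recalled in Section~\ref{sec:brrei}. Splitting the total progeny by its immigration events represents it as a sum of $x$ blocks $V_1,\ldots,V_x$ which, conditionally on $\omega$, are independent (this is the same structure as for Theorem~\ref{thm:main1}). Kesten's renewal theorem applied to the stochastic recurrence equation governing a single $V_i$ furnishes the precise tail
\begin{equation*}
\P(V>t) \sim \mathcal{C}(\alpha)\, t^{-\alpha} \qquad (t\to\infty),
\end{equation*}
where nonarithmeticity of $\log A$ provides the necessary input and the constant is the same as in Theorem~\ref{thm:main1}.

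Since $\alpha\le 1$ the blocks have infinite mean and the sum $\sum V_i$ is governed by its maximum, so the one-big-jump principle should give
\begin{equation*}
\P\Bigl(\sum_{i=1}^{x} V_i > n/2\Bigr) = x\,\P(V>n/2)\bigl(1+o(1)\bigr)
\end{equation*}
uniformly in $x\in\Gamma_n$, and combining with the previous steps yields \eqref{eq:56}. The main obstacle I expect is making this one-big-jump estimate precise and uniform across the whole range $\Gamma_n=(c_n\log n,\,n^{\alpha}/(\log n)^M)$: the lower cutoff $c_n\log n$ is forced by the need for $x$ to be large enough that a single atypical block beats rare-environment scenarios with no big jump, while the upper cutoff $n^{\alpha}/(\log n)^M$ keeps $x\P(V>n)$ small enough that the probability of two or more big jumps is a lower-order correction. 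Because the $V_i$ are only conditionally independent given $\omega$, delivering precise (not merely logarithmic) control requires adapting the partial-sum machinery of Buraczewski et al.~\cite{Buraczewski:Damek:Mikosch:Zienkiewicz:2013} to this branching setup, following the same route as in Theorem~\ref{thm:main1} but now in the regime $x\ll n^{1/\alpha}$ rather than $x\gg n^{1/\alpha}$.
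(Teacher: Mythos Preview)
Your proposal is correct and follows essentially the same route as the paper: both Theorem~\ref{thm:main1} and Theorem~\ref{thm:main2} are stated there as corollaries of a single result about $W_n$ (Theorem~\ref{thm:wn}), and Theorem~\ref{thm:main2} is deduced from it via the hitting-time identity $T_x\stackrel{d}{=}x+2W_x$ exactly as you outline, with the gap between $\{X_n<x\}$ and $\{T_x>n\}$ closed by the excursion-length bound $\P(L_j>k)\le C\rho^k$ (taking $k=D\log n$, which is $o(x)$ on $\Gamma_n$). Your intermediate one-big-jump heuristic among the immigrant blocks $V_i$ is not how the paper argues---since the $V_i$ are not i.i.d.\ under $\P$ the paper instead uses the time-window block decomposition of Sections~5--6---but you already anticipate this in your final paragraph by pointing to the Buraczewski--Damek--Mikosch--Zienkiewicz machinery, which is precisely Theorem~\ref{thm:wn}.
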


\subsection{ The structure of the paper}
	The article is organized as follows. In Section~\ref{sec:brrei} we present an associated branching process in random environment with immigration and translate the problem of large
	deviations of RWRE into those of BPRE with immigration. In Section~\ref{sec:approach} we present some intuitions related to our arguments. The last three sections are devoted to
	the proof of our results.

\section{Branching process in random environment with immigration}\label{sec:brrei}

	From now on, we will suppose that the assumptions of Theorem~\ref{thm:main1} are in force.

\subsection{Construction of associated branching process with immigration}
	We will begin by introducing a branching process in random environment with immigration associated with $X$. For this reason consider the first hitting time of $X$, given viz.
	$$
		T_n = \inf\{  k: X_k = n \}.
	$$
	As shown in \cite{kesten1975limit}, one can express $T_n$ using a branching process. To see that, let $U_i^n$ be the number of steps made by $X$ from $i$ to $i-1$ during
	$[0,T_n)$, that is
	$$
		U_i^n = \# \big\{ k < T_n: X_k=i, X_{k+1} = i-1 \big\}, \qquad i < n.
	$$
	Then, since $X_0=0$ and $X_{T_n}=n$, we have
	\begin{eqnarray*}
		T_n &=& \# \mbox{ of steps during $[0,T_n)$}\\
			&=& \# \mbox{ of steps to the right during $[0,T_n)$} + \# \mbox{ of steps to the left during $[0,T_n)$}\\
			&=& n + 2 \cdot \# \mbox{ of steps to the left during $[0,T_n)$}\\
			&=& n + 2\sum_{i < n} U_i^n.
	\end{eqnarray*}
	Note that the summation above extends over all integers $i \in ( -\infty, n)$. As a conclusion, all the randomness of $T_n$ comes from the infinite sum
	\begin{equation}\label{eq:1}
  		\sum_{i < n} U_i^n.
	\end{equation}
	It turns out that $(U_i^n)_{i\leq n}$ exhibits a branching structure. To make it evident, fix an environment $\o \in \Omega$, an integer $n \geq 0$ and consider the
	sequence $U_n^n, U_{n-1}^n, \ldots$.
	Obviously $U_n^n=0$ since $X$ cannot reach $n$ before the time $T_n$. Firstly, we will inspect $0\le i <n$. Note that a jump  $i \to i-1$ can occur either before the first jump
	$i+1 \to i$, between two jumps $i+1\to i$ or after a last jump $i+1 \to i$. Whence, we may express $U_i^n$ in the following fashion
	$$
		U^n_i = \sum_{k=1}^{U_{i+1}^n}V^i_k + V^i_0, \quad 0\le i< n,
	$$
	where $V^i_0$ denotes the number of jumps $i \to i-1$ before the first jump $i+1 \to i$, for $U^n_{i+1}>k >0$, $V^i_k$ denotes the number of jumps $i \to i-1$ between $k$th and
	$ (k+1)$th jump $i+1 \to i$ and for $k = U_{i+1}^n$ is the number of jumps $i \to i-1$ after the last jump $i+1\to i$. Note that since the underlying random walk is transient to the
	right under $\P_\o$, $V^i_k$'s are iid with geometric distribution with parameter $\o_i$, that is
   	\begin{equation}\label{eq:2}
     		\P_\o(V^i_k = l) = \o_i (1-\o_i)^l
   	\end{equation}
	and moreover there are independent of $U_{i+1}^n$. For $i <0$ the behaviour of $U_i^n$ is different. Since $X$ starts from $0$, there will be no jumps from $i \to i-1$
	before the first jump $i+1 \to i$. Apart from that, the relation between $U_i^n$ and $U_{i+1}^n$ is the same as previously, more precisely
	$$
		U^n_i = \sum_{k=1}^{U_{i+1}^n}V^i_k,  \quad i< 0,
	$$
	where $V_k^i$ is distributed as indicated by~\eqref{eq:2}. In conclusion $\{ U_{n-j}^n \}_{j \geq 0}$ forms a sequence  of generation sizes of an inhomogeneous
	branching process with immigration in which one immigrant enters the system only at first $n$ generations. The reproduction law is geometric with parameter $\o_{n-j}$ in the
	$j$th generation.

\medskip

	We will ease the notation and consider a branching process in random environment $Z=\{Z_{n} \}_{n \geq 0}$ with evolution which can be described as follows. We start at
	time $n=0$ with no particles, so that $Z_0 =0$. Next first immigrant enters  the systems and generates $\xi_0^0$ offspring with geometric distribution with parameter $\o_0$,
	that is
	$$
		\P_\o (\xi_0^0 = l ) = \o_0(1-\o_0)^l,
	$$
	these particles will form the first generation, i.e. $Z_1 = \xi_0^0$. At time $n$ for $ n \geq 1$, $(n+1)$th immigrant enters the system and reproduces independently from other
	particles (with respect to $\P_\o$). Their offspring will form the $(n+1)$th generation, that is
	\begin{equation}\label{eq:inhombr}
		Z_{n+1} = \sum_{k=1}^{Z_{n}} \xi^{n}_k + \xi^{n}_0,
	\end{equation}
	where $\{\xi^n_k\}_{k \geq 0}$ are iid with geometric distribution
	$$
		\P_\o (\xi_0^{n} =l ) = \o_{n}(1-\o_{n})^l
	$$
	and independent of $Z_{n}$. Note that $Z_{n+1}$ depends on the environment up to time $n$, that is it depends on $\o_0$, \ldots $\o_{n}$. To analyse $Z$, it will be convenient
	to group the particles depending on which immigrant they originated from, so let $Z_{i,n}$ denote the number of progeny alive at time $n$ of the $i$th  immigrant. Note that then
	$\{ Z_{i,n} \}_{n\geq i}$ forms a homogeneous branching process, that is $Z_{i,n} = 0$, for $n <i$ and
	$$
		Z_{i,i} \stackrel{d}{=} \xi_{0}^{i-1},
	$$
	with respect to the quenched probability $\P_\o$ for all $\o \in \Omega$, and for $ n>i$,
	\begin{equation}\label{eq:hombr}
		Z_{i,n}  \stackrel{d}{=} \sum_{k=1}^{Z_{i,n-1}} \xi_k^{n-1}.
	\end{equation}
	This process in subcritical, since
	$$
		\E_\o \xi_0^0 = \frac{1-\o_0}{\o_0}= A_0
	$$
	and by our standing assumption $\E[\log(A)] <0$. Whence, we are allowed to consider the total population size of the process initiated by the $i$th immigrant denoted by
	$$
		\wZ^i_{i, \infty} = \sum_{n= i}^{\infty}Z_{i, n}
	$$
	and the total size of population started by the first $n$ immigrants, given via
	$$
		W_n = \sum_{k=1}^{n} \wZ^k_{k, \infty}.
	$$
	Now, since $\o$ for a sequence of iid random variables, after we average over $P$, we can conclude that
	$$
		W_n \stackrel{d}{=} \sum_{i< n}U_i^n \quad \mbox{with respect to } \P.
	$$
	Our strategy is to establish Theorem~\ref{thm:wn} stated below, from which we will infer Theorem~\ref{thm:main1} and Theorem~\ref{thm:main2}.

\begin{thm}\label{thm:wn}
	Under the assumptions of Theorem~\ref{thm:main1} for $\alpha >1$ and
Theorem~\ref{thm:main2} for $\alpha \le 1$
we have
	$$
		\lim_{n\to\8} \sup_{x \in \Lambda_n}\bigg| \frac{\P(W_n - d_n > x)}{nx^{-\a}} - \mathcal{C}_1(\a) \bigg| = 0,
	$$
	where $d_n = \E W_n$ for $\alpha>1$, $d_n=0$ for $\alpha\le 1$ and
	\begin{equation*}
  		\Lambda_n = \left \{
  		\begin{array}{ll}
  			\big(n^{1/\a}(\log n)^M,e^{s_n}\big)\quad & \mbox{for $\a \in (0,2]$}\\
  			\big( c_n n^{1/2}\log n, e^{s_n}\big) \quad & \mbox{for $\a> 2$}\\
  		\end{array}
  		\right.
	\end{equation*}
   	for $M>2$, $c_n$, $s_n \to \infty$ and $s_n = o(n)$.
\end{thm}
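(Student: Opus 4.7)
Since each immigrant's subtree $\wZ^k_{k,\infty}$ has quenched mean $A_{k-1} R_k$, where $R_k = 1 + A_k + A_k A_{k+1} + \cdots$ is the perpetuity built from the environment starting at time $k$, all the summands in $W_n$ share the same distribution as $\wZ := \wZ^1_{1,\infty}$ and are heavy-tailed. The overall strategy is to establish a sharp one-big-jump principle for $W_n$ along the lines of Buraczewski--Damek--Mikosch--Zienkiewicz, with the environmental dependence between the summands handled by truncation plus a decorrelation estimate based on the geometric decay of the underlying perpetuity.

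\textbf{Step 1 (single-term tail).} First I would apply the Kesten--Goldie theorem to $R_1$: under $\E A^{\a}=1$, $\E A^{\a+\delta}<\infty$ and the non-arithmeticity of $\log A$, we obtain $\P(R_1 > x) \sim K x^{-\a}$. A conditional analysis of $\wZ$ given the environment (geometric-offspring BPRE whose conditional second moment is comparable to the square of its mean) then yields
$$\P(\wZ > x) \sim C_\a\, x^{-\a}, \qquad x \to \infty,$$
which pins down the constant $\mathcal{C}_1(\a)$ appearing in the theorem.

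\textbf{Step 2 (truncation of small contributions).} I would split each summand at a level $T_n$ chosen slightly below the lower end of $\Lambda_n$ (e.g.\ $T_n = x/\log n$), and write $W_n = W_n^{\le} + W_n^{>}$. For $W_n^{\le}$ I would prove that
$$\P\bigl(|W_n^{\le} - d_n| > x/2\bigr) = o\bigl(n x^{-\a}\bigr)$$
uniformly on $\Lambda_n$. For $\a \in (1,2]$, the truncated variance of a single term is controlled by $T_n^{2-\a}$ via the tail from Step~1, so a Fuk--Nagaev-type bound gives the required control as long as $n T_n^{2-\a} \ll x^2/\log n$, which is exactly what the lower endpoint of $\Lambda_n$ provides. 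For $\a>2$ the unconditional variance is finite and one uses a Gaussian/Bernstein bound. For $\a\le 1$ there is no centering and the small-jump truncation is bounded directly using the tail estimate. The dependence between the $\wZ^k_{k,\infty}$ is handled by conditioning on the environment, rewriting $W_n^{\le}$ as a sum of conditionally independent variables, and integrating the resulting exponential bounds against the law of $\omega$, using that $\E(R_k R_{k+j})$ decays geometrically in $j$ by the subcriticality~\eqref{eq:trans}.

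\textbf{Step 3 (the one big jump).} On the complement, I would show that with probability $1-o(nx^{-\a})$ at most one summand $\wZ^k_{k,\infty}$ exceeds $T_n$, using a second-moment bound whose leading term is $n^2 \P(\wZ>T_n)^2 = o(nx^{-\a})$ and whose dependence corrections are again absorbed by the geometric decorrelation of the $R_k$'s. Step~1 then gives
$$\P\bigl(\exists! k\le n:\ \wZ^k_{k,\infty} > x\bigr) \sim n\,\P(\wZ > x) \sim n \mathcal{C}_1(\a)\, x^{-\a},$$
and combining with Step~2 yields the claimed asymptotic.

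\textbf{Main obstacle.} The delicate point is Step~2 in the dependent, heavy-tailed regime: decorrelating the quenched means $A_{k-1} R_k$ enough to run a Fuk--Nagaev inequality while keeping the leading constant sharp, and doing so \emph{uniformly} over the long range $\Lambda_n$ (whose lower endpoint is tuned precisely so that the truncated-sum fluctuations are just below $n\mathcal{C}_1(\a) x^{-\a}$, and whose upper endpoint $e^{s_n}$ with $s_n=o(n)$ prevents rare environmental excursions from perturbing the tail). This is where the BDMZ machinery must be adapted carefully to the present branching-in-random-environment setting, and I expect this to be the most technical portion of the proof.
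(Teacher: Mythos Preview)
Your overall plan---a one-big-jump principle for $W_n=\sum_{k=1}^n\wZ^k_{k,\infty}$---is correct in spirit, but the serious gap is in Step~3, not in Step~2 where you locate the main obstacle. The summands $\wZ^k_{k,\infty}$ cluster: each depends on the entire environmental tail $\omega_{k-1},\omega_k,\ldots$, and in fact the quenched means satisfy $\E_\omega\wZ^1=A_0(1+\E_\omega\wZ^2)$, so consecutive perpetuities differ essentially by one factor of $A$. When a single atypical stretch of environment makes $\wZ^k$ exceed $T_n$, the same stretch forces $\wZ^{k\pm 1},\wZ^{k\pm 2},\ldots$ to be comparably large. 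Concretely, $\P(\wZ^i>T_n,\ \wZ^j>T_n)\asymp T_n^{-\alpha}$ (not $T_n^{-2\alpha}$) whenever $|i-j|=O(1)$, so the near-diagonal part of your second-moment sum is already of order $nT_n^{-\alpha}=(\log n)^{\alpha}\cdot nx^{-\alpha}$, which swamps the target $o(nx^{-\alpha})$. The ``geometric decorrelation of the $R_k$'s'' controls only far-apart pairs and cannot absorb this. You therefore cannot conclude that at most one $\wZ^k$ exceeds $T_n$; the big jump is a big \emph{cluster}, and identifying its tail---hence the constant $\mathcal{C}_1(\alpha)$---requires more than the single-term tail from Step~1.

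The paper resolves this by truncating in \emph{time} rather than in size: each $\wZ^j_{j,\infty}$ is replaced by $\wZ^j_{j+n_1,j+n_2}$, the progeny born in a window of width $O((\log x)^{1/2+\delta})$ around the most likely deviation epoch $n_0\approx(\log x)/\rho_0$; Lemmas~\ref{lem:6} and~\ref{lem:7} show the discarded early and late parts contribute $o(nx^{-\alpha})$. The point of this truncation is that the resulting terms are genuinely $m$-dependent with $m=O(\log x)$, so grouping them into blocks $\W_k$ of length $n_1$ produces a $2$-dependent sequence. The tail of one block is then computed via the \emph{regeneration} structure of the BPRE with immigration (Proposition~\ref{prop:m1}), which is what produces the constant $\mathcal{C}_1(\alpha)=\mathcal{C}_3(\alpha)/\E\nu$; Proposition~\ref{prop:m2} controls joint exceedances of neighbouring blocks; and only then does a one-big-\emph{block} argument (with Prokhorov's inequality for the bounded remainder) give the theorem. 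Any repair of your Step~3 would need a device of comparable strength to manufacture finite-range dependence before running the large-deviation machinery.
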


Theorems \ref{thm:main1} and \ref{thm:main2} are  relatively simple corollaries  from Theorem~\ref{thm:wn}. Therefore we first establish the implication, and in the remaining part of the paper we concentrate on the proof of the above result.  Below we present how Theorem \ref{thm:main1}
can be deduced. We skip the details concerning our second result, Theorem \ref{thm:main2}.
 From the proof  we can easily deduce that  for the constant $\mathcal{C}(\a)$ appearing in Theorem~\ref{thm:main1} one has
	\begin{equation*}
		\mathcal{C}(\alpha) =  \left\{ \begin{array}{lr}  (2v)^{\alpha}\mathcal{C}_1(\alpha) & \alpha >1 \\ \mathcal{C}_1(\alpha) & \alpha  \leq 1 \end{array} \right. .
	\end{equation*}

\begin{proof}[Proof of Theorem \ref{thm:main1}]
	Recall that with respect to the annealed probability $\PP$
	$$
		T_n \stackrel{d}{=} 2W_n + n,
	$$
	where
	$$
		\E W_n = \frac{n\rho}{1-\rho}, \qquad \rho = \EE A = \lambda(1) <1.
	$$

\medskip

{\sc Step 1. Lower estimates}
	Write for $x \in \Gamma_n$,
	\begin{align*}
 		\P\big( X_n - nv < -x \big) & \ge  \P\big( T_{nv-x}> n \big)\\
  			& =  \P\big( 2 W_{nv-x } +(nv-x) > n \big)\\
  			& =  \P\bigg(  W_{nv-x}  -\E W_{nv-x} >\frac 12 \bigg( n- nv+x -\frac{2\rho (nv-x)}{1-\rho} \bigg) \bigg)\\
  			& =  \P\bigg(  W_{nv-x}  -\E W_{nv-x} >   \frac{x}{2v} \bigg).
	\end{align*}
	Hence
	\begin{align*}
		  \frac {x^\a}{nv-x} \cdot   \P\big( X_n - nv < -x \big) &\ge
		 \frac {x^\a}{nv-x} \cdot  \P\bigg(  W_{nv-x}  -\E W_{nv-x} >   \frac{x}{2v} \bigg) \\&= (2v)^\a \mathcal{C}_1(\a)+o(1)=\mathcal{C}(\alpha) +o(1)
	\end{align*}
	uniformly in with respect to $x \in \Gamma_n$.

{\sc Step 2. Upper estimates}
	We will apply an argument similar to the one presented in~\cite{dembo1996tail}. Denote
	$$
		L_j  = \max_i\big\{ j-X_i:\; i\ge T_j  \big\}
	$$
	to be the longest excursion of $X$ to the left of of $j$, after the first hitting time at $j$. By the virtue of Lemma 2.2 in~\cite{dembo1996tail}
	$$
		\P (L_j > k) \le C \rho^k.
	$$
	Take $k = D\log n$ for some large $D$ which we will specify later. Note that
	$$
		\P\big( X_n - nv < -x \big)\le \P\big( T_{nv-x+k} > n\big) + \P\big( T_{nv-x+k} \le  n \mbox{ and } L_{nv - x +k} >  k \big).
	$$
	The second term is smaller than $n^{-\eps D}$, which with a proper choice of $D$ is negligible. To estimate the first term we write
	\begin{align*}
		\P\big( T_{nv-x+k}> n \big)
  			& = \P\big( 2 W_{n v-x +k} +( n v-x +k) > n \big)\\
  			& = \P\bigg(  W_{nv-x +k}  -\E W_{n v-x+k} >\frac 12 \bigg( n- nv+x  +k  -\frac{2\rho (nv-x+k)}{1-\rho}  \bigg) \bigg)\\
  			& = \P\bigg(  W_{nv-x+k }  -\E W_{n v-x+k} >  \frac{x-k}{2v} \bigg).
	\end{align*}
	Hence
	\begin{align*}
		  \frac {x^\a}{nv-x} \cdot &   \P\big( X_n - nv < -x \big) \\ &\le
		 \frac {(x-k)^\a}{nv-x+k} \cdot  \P\bigg(  W_{nv-x+k}  -\E W_{nv-x+k} >   \frac{x-k}{2v} \bigg) +o(1) = \mathcal{C}(\a).
	\end{align*}
\end{proof}

\subsection{Quantification of the environment}
	We will start with a few useful formulas for the process with immigration $\{ Z_{n}\}_{n \geq 0}$ and the process initiated by the $i$th immigrant  $\{Z_{i,n} \}_{n \geq 0}$. Firstly
	$\E_\o[Z_{i,i}]= A_{i-1}$ and by~\eqref{eq:hombr} and an appeal to independence of $\xi_k^i$'s and $Z_{i,n}$ with respect to $\P_\o$, we get
	$$
		\E_\o Z_{i,n+1} = A_n \E_\o  Z_{i,n}  \quad n \geq i.
	$$
	Whence, we infer that
	$$
		\Pi_{i-1,n-1} =\E_\o Z_{i,n} =  \prod_{j=i-1}^{n-1}A_j, \quad n \geq i.
	$$
	For the recursive formula for the quenched moments of $Z_{n}$, we go back to \eqref{eq:inhombr} and deduce that $\E_\o[Z_0]=0$ and for $n \geq 0$,
	$$
		\E_\o Z_{n+1} = A_n \E_\o Z_n + A_n.
	$$
	So that after a simple inductive argument
	$$
		Y_{n-1}= \E_\o Z_n  = \sum_{j=0}^{n-1} \Pi_{j,n-1}, \quad n \geq 1
	$$
	and $Y_0=0$. Let $\wZ^i_{k,n}$  denote the number of progeny of the $i$th immigrant, between time $n\ge k >i$, i. e.
	$$
		\wZ^i_{k,n} = \sum_{j=k}^n Z_{i,j}
	$$
	and the corresponding quenched mean, for $n \ge k>i$
	$$
		\E_\o\wZ^i_{k,n} = \wY^{i-1}_{k-1,n-1} = \sum_{j=k-1}^{n-1} \Pi_{i-1,j}.
	$$
	Finally, denote for simplicity
	$$
		\wY_{n} = \wY^0_{0,n} \quad \mbox{and} \quad \wZ_k=\wZ^1_{1,k}.
	$$
	Notice that $\wY^1_{k,n}$ and $\wY_{n-k}$ have the same distribution.

\medskip

	We defined two processes $\{Y_n\}_{n \geq 0}$ and $\{\wY_n\}_{n \geq 0}$. The first one admits the recursive formula
	$$
		Y_{n} = A_nY_{n-1}+A_n
	$$
	which is one of the most recognized Markov chains and is a particular example of the stochastic affine recursion, called also in the literature the random difference equation, or just
	the '$ax+b$' recursion. The last name reflects the fact that if we consider the pair $(A_n,A_n)$ as an element of the affine '$ax+b$' group then $Y_n$ is just the result of the action
	of this element on $Y_{n-1}$
	\begin{equation*}
  		Y_n = (A_n,A_n)\circ Y_{n-1}. 
	\end{equation*}
	In general $Y_n$ is the second coordinate of left random walk on the '$ax+b$' group, more precisely
	\begin{equation}\label{eq:100}
		Y_n = (A_n,A_n)\circ Y_{n-1} = (A_n,A_n)\circ\ldots\circ (A_0,A_0) \circ 0.
	\end{equation}
	The study of the process $\{Y_n\}_{n \geq 0}$ (usually in a more general settings with random $(A,B)$ instead of vector $(A,A)$) has a long history going back to
	Kesten~\cite{kesten1973random}, Grincevicius~\cite{grincevicius1975one}, Vervaat~\cite{vervaat1979stochastic} and others. We refer the reader to the recent
	monographs~\cite{buraczewski2016stochastic, iksanovrenewal} containing a comprehensive bibliography. 

\medskip

	The process $\{\wY_n\}_{n \geq 0}$ also can be represented in terms of the affine group. A simple calculation leads us to the following formula
	\begin{equation}\label{eq:101}
  		\wY_n =  (A_0,A_0)\circ\ldots\circ (A_n,A_n) \circ 0.
	\end{equation}
	Thus $\{\wY_n\}_{n \geq 0}$ is given as the action of the random elements $(A_j,A_j)$ but in reversed order. This explain that $\{ \wY_n\}_{n \geq 0}$ is called the backward
	process (in contrast to $\{Y_n\}_{n \geq 0}$, which is sometimes referred to as the forward process). Apart from the affine group, $\{\wY_n\}_{n \geq 0}$ has an interpretation
	in terms of Financial Mathematics, and for that reason it is very often called the perpetuity sequence.

\medskip

	Formulas \eqref{eq:100} and \eqref{eq:101} justify that for fixed $n$ random variables $Y_n$ and $\wY_n$ have the same distribution. If follows from the Cauchy ratio test that if
	$\E\log A<0$, then $\wY_n$ converges a.s. to
	$$
		\wY_\8 = \sum_{j=0}^\8 \Pi_{0,j}.
	$$
	Moreover, $\E \wY_n^{\beta} \to \E \wY_\8^{\beta}$ for any $\beta < \alpha$, for details see Section 2.3 of~\cite{buraczewski2016stochastic}. Of course this entails
	convergence in distribution of $Y_n$ to $Y_\8$.

\medskip

	The celebrated result by Kesten \cite{kesten1973random} (see also Goldie \cite{goldie1991implicit}) constitutes that $Y_\8$ has a~heavy tail.

\begin{lem}\label{lem:kesten}
	If hypothesis of Theorem~\ref{thm:main1} are  satisfied then
	$$
		\lim_{x\to\8} x^\a \P\big(\wY_\8 > x\big) = \mathcal{C}_2(\alpha),
	$$
	where
	\begin{equation}\label{eq:c1}
		\mathcal{C}_2(\alpha) = \frac{\EE \Big[ \big(\wY_\8+1\big)^{\alpha}- \wY_\8^{\alpha} \Big]}{\alpha \EE [ A^{\alpha} \log A]}.
	\end{equation}
\end{lem}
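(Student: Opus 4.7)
The plan is to reduce the claim to Goldie's implicit renewal theorem \cite{goldie1991implicit}, applied to the affine stochastic fixed-point equation satisfied by $\wY_\8$.

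First, I would establish the distributional identity
\begin{equation*}
\wY_\8 \stackrel{d}{=} A + A\, \wY_\8',
\end{equation*}
with $\wY_\8'$ an independent copy of $\wY_\8$ that is also independent of $A$. Singling out the first factor in $\wY_\8 = \sum_{j \ge 0} \Pi_{0,j}$ yields $\wY_\8 = A_0 + A_0 \sum_{j \ge 1} A_1 \cdots A_j$, and the iid structure of $(A_k)_{k \in \ZZ}$ shows that the tail sum is independent of $A_0$ and has the same distribution as $\wY_\8$.

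Second, I would verify the hypotheses of Goldie's theorem, regarding the recursion as the affine equation with coefficient pair $(A,B) = (A,A)$. The Cramer condition $\EE A^\alpha = 1$ is \eqref{eq:kesten}. The moment assumption $\EE A^{\alpha+\delta} < \8$ supplies $\EE |B|^\alpha = \EE A^\alpha < \8$ as well as $\EE[A^\alpha |\log A|] < \8$ --- on $\{A \ge 1\}$ use the elementary bound $\log A \le \delta^{-1} A^\delta$, and on $\{A < 1\}$ bound $A^\alpha |\log A|$ by a constant times $A^{\alpha/2}$. The law of $\log A$ is nonarithmetic by assumption. Finally, $\PP(A = 1) < 1$ together with $\EE A^\alpha = 1 = \EE A^0$ forces strict convexity of $s \mapsto \EE A^s$ at $\alpha$, hence $\EE[A^\alpha \log A] > 0$. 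Goldie's theorem then delivers
\begin{equation*}
\lim_{x \to \8} x^\alpha \PP(\wY_\8 > x) = \frac{1}{\alpha \, \EE[A^\alpha \log A]}\, \EE\bigl[(A\wY_\8 + A)^\alpha - (A\wY_\8)^\alpha\bigr],
\end{equation*}
with the $\wY_\8$ inside the bracket taken independent of $A$.

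Third, I would extract $A^\alpha$ from the bracket, invoke the independence of $A$ and $\wY_\8$ on the right-hand side, and apply $\EE A^\alpha = 1$:
\begin{equation*}
\EE\bigl[A^\alpha\bigl((\wY_\8 + 1)^\alpha - \wY_\8^\alpha\bigr)\bigr] = \EE[A^\alpha]\, \EE\bigl[(\wY_\8 + 1)^\alpha - \wY_\8^\alpha\bigr] = \EE\bigl[(\wY_\8 + 1)^\alpha - \wY_\8^\alpha\bigr].
\end{equation*}
This is exactly the constant $\mathcal{C}_2(\alpha)$ from \eqref{eq:c1}, which is strictly positive since the integrand in the bracket is strictly positive pointwise.

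The main obstacle is packaged inside Goldie's theorem itself: passing from the stochastic fixed point to the precise tail asymptotic is done by a logarithmic change of variables that turns the fixed-point identity into a renewal equation, whose solution converges via the key renewal theorem once the driving function is shown to be directly Riemann integrable. I would cite this as a black box; what remains for the present paper is the hypothesis check and the algebraic simplification above, both routine.
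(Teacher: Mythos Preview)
Your argument is correct. The paper itself does not prove this lemma: it is stated as the ``celebrated result by Kesten \cite{kesten1973random} (see also Goldie \cite{goldie1991implicit})'' and is simply cited. What you have written is precisely the standard verification that the hypotheses of Goldie's implicit renewal theorem hold for the affine recursion with $(A,B)=(A,A)$, together with the algebraic simplification of the constant using $\E A^\alpha=1$; this is the intended route and matches the references the paper points to.
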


	This result was the main ingredient in \cite{kesten1975limit}. For our purposes, we need to enter deeper into the structure of both processes.
	Namely we need to understand not only the probability of exceedence of large values  by the perpetuity, but also to understand when is it most likely to happen. This problem
	was studied in \cite{buraczewski2016large, buraczewski2015pointwise}

\section{The approach}\label{sec:approach}
	Before proceed to the proof, we would like to give a reader-friendly discussion on our approach. We will state some Lemmas below and if we do not use them in the sequel, we
	restrain ourself from presenting the proof in order to keep this section as brief as possible. Define the stopping time via
	$$
		\nu = \inf_{k>0} \{ Z_k=0\}.
	$$
	After time $\nu$ the process regenerates, that is $\{ Z_{\nu +n}\}_{n \geq 0} \stackrel{d}{=}\{ Z_{n}\}_{n \geq 0}$. Due to Kesten et al.~\cite{kesten1975limit}, it is known,
	that the process regenerates exponentially fast.

\begin{lem}\label{lem:rtime}
	For some $c>0$ and $\delta >0$ one has
	$$
		\PP( \nu > k) \leq c e^{-\delta k}.
	$$
\end{lem}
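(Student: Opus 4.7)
The plan is to view $(Z_k)_{k \geq 0}$ as a Markov chain under the annealed measure $\PP$ (which is possible thanks to the iid structure of the environment), verify that $\{0\}$ is an atom accessible from every state, and establish a Foster--Lyapunov drift condition toward this atom. Standard Markov chain theory then delivers the exponential tail.

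First I would observe that since $(\omega_n)$ is iid, integrating the quenched one-step transition $\PP_\omega(Z_1 \in \cdot \mid Z_0 = z)$ over $\omega_0$ yields an annealed transition kernel depending only on $z$; hence $(Z_k)$ is itself Markov under $\PP$. Conditional on $Z_0 = z$ and $\omega_0$, the variable $Z_1$ is a sum of $z+1$ iid geometric variables with parameter $\omega_0$, so
$$\PP(Z_1 = 0 \mid Z_0 = z) = \E\bigl[\omega^{z+1}\bigr] > 0,$$
showing that $\{0\}$ is accessible in one step from every state.

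For the drift, note that $h(\gamma) := \E A^\gamma$ is log-convex with $h(0) = h(\alpha) = 1$ and $h'(0) = \E \log A < 0$, so there exists $\gamma \in (0, \min(1, \alpha))$ with $\rho_\gamma := \E A^\gamma < 1$. Taking $V(z) = (1+z)^\gamma$, concavity of $x \mapsto x^\gamma$ followed by subadditivity of the same map yields
\begin{align*}
\E_\omega[V(Z_1) \mid Z_0 = z] &\leq \bigl(1 + \E_\omega[Z_1 \mid Z_0 = z]\bigr)^\gamma = \bigl(1 + (z+1) A_0\bigr)^\gamma \\
&\leq 1 + A_0^\gamma V(z),
\end{align*}
and averaging over $\omega_0$ gives $\E[V(Z_1) \mid Z_0 = z] \leq 1 + \rho_\gamma V(z)$.

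Finally, choosing $z_0$ so that $1 + \rho_\gamma V(z) \leq \tfrac12(1+\rho_\gamma) V(z)$ for $z > z_0$, one obtains a geometric drift outside the finite set $C = \{0, 1, \ldots, z_0\}$. The set $C$ is small because $\inf_{z \in C} \PP(Z_1 = 0 \mid Z_0 = z) = \E[\omega^{z_0+1}] > 0$, and the drift condition together with the atom at $\{0\}$ yield, via standard Foster--Lyapunov theory, $\E_z[\lambda^{\tau_{\{0\}}}] < \infty$ for some $\lambda > 1$ uniformly in $z \in C$. Starting from $Z_0 = 0$ this is exactly the claim $\PP(\nu > k) \leq c\, e^{-\delta k}$ with $\delta = \log \lambda$. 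The main obstacle is the inhomogeneous "$+1$" term in the drift inequality: it prevents the naive Markov bound $\E V(Z_k) \to 0$ (indeed $\E V(Z_k)$ only stays bounded), which is why the argument has to be packaged through Foster--Lyapunov / Meyn--Tweedie theory. A self-contained alternative is a block argument: partition time into blocks of length $L$, use the drift to control $Z_{jL}$ in $V$-norm, and then exploit $\PP_\omega(Z_{k+1} = 0 \mid Z_k = z) = \omega_k^{z+1}$ in a favourable environment to show that each block hits $0$ with probability bounded below conditional on $Z$ being moderate at its start.
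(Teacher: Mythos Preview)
The paper does not give its own proof of this lemma; it merely records the statement and attributes it to Kesten, Kozlov and Spitzer~\cite{kesten1975limit}. So there is nothing to compare your argument against in the paper itself.

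Your approach is correct. The crucial first step --- that $(Z_k)_{k\ge0}$ is a genuine (time-homogeneous) Markov chain under the \emph{annealed} law $\PP$ --- holds precisely because the transition from $Z_k$ to $Z_{k+1}$ uses only the fresh environment variable $\omega_k$, which is independent of $(\omega_0,\ldots,\omega_{k-1})$ and hence of $(Z_0,\ldots,Z_k)$. Your drift computation with $V(z)=(1+z)^\gamma$ for $\gamma\in(0,\alpha\wedge1)$ and $\rho_\gamma=\E A^\gamma<1$ is clean and accurate; the Jensen step and the subadditivity $(a+b)^\gamma\le a^\gamma+b^\gamma$ are exactly what is needed to produce $PV\le 1+\rho_\gamma V$.

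One small point worth tightening: the Foster--Lyapunov/Meyn--Tweedie machinery gives you $\sup_{z\in C}\E_z[\kappa^{\tau_C}]<\infty$ for the finite small set $C=\{0,\ldots,z_0\}$, not directly for the atom $\{0\}$. You do indicate the patch (from any $z\in C$ one hits $0$ in one step with probability at least $\E[\omega^{z_0+1}]>0$, so $\tau_{\{0\}}$ is dominated by a geometric sum of excursions away from $C$), and this is routine, but it is a genuine extra step rather than an immediate consequence. Your ``self-contained block'' alternative in the last sentence is in fact closer in spirit to the kind of bare-hands argument one finds in~\cite{kesten1975limit}.
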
	

	Define the first passage time of $Z$ viz.
	$$
		\tau_t = \inf \{ n \geq 0 \: | \: Z_n >t \}.
	$$
	The  tail asymptotic of total population size, given in the next Lemma, was proved by Kesten et al.~\cite{kesten1975limit} in the case $\alpha <2$. The result can be easily
	extended to cover $\alpha \geq 2$. We provide a sketch of the argument in the next Section.

\begin{lem}\label{lem:kks}
	Under the standing assumptions
	$$
		\P\left( \sum_{k=0}^{\nu-1} Z_k > x \right)\sim \mathcal{C}_3(\alpha) x^{-\a}, \qquad x\to\8,
	$$
	where $\mathcal{C}_3(\alpha)$ is given as the finite limit of the conditional expectation
	$$
		\mathcal{C}_3(\alpha) = \mathcal{C}_2(\alpha)\lim_{t \to \infty} \EE \left[\left.Z_{\tau_t}^{\alpha} \right| \tau_t < \nu\right].
	$$
\end{lem}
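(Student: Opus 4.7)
The plan is to follow the strategy of Kesten, Kozlov and Spitzer~\cite{kesten1975limit}, who established this result for $\a<2$; the aim is to explain how the argument extends to $\a\ge 2$ under the moment hypothesis $\EE A^{\a+\delta}<\infty$. Write $S:=\sum_{k=0}^{\nu-1}Z_k$. The heuristic is that $S$ is large precisely when the quenched mean $Y_k=\EE_\o Z_{k+1}$ realises a large value in the course of a single excursion, and since $Y_k$ is essentially a truncation of the perpetuity $\wY_\8$, the tail of $S$ is governed by Lemma~\ref{lem:kesten}.

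First I would introduce an intermediate scale $t=t(x)$ with $t\to\8$ and $t=o(x/\log x)$, and split
$$
	\P(S>x) \;=\; \P(S>x,\,\tau_t<\nu) \;+\; \P(S>x,\,\tau_t\ge \nu).
$$
On $\{\tau_t\ge\nu\}$ the process never exceeds $t$ before dying, hence $S\le t\nu$, and Lemma~\ref{lem:rtime} forces $\PP(t\nu>x)\le c\,e^{-\delta x/t}=o(x^{-\a})$. So everything reduces to the first term.

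For the main term, I would apply the branching/strong Markov property at $\tau_t$. Conditional on $\{\tau_t<\nu,\,Z_{\tau_t}=z\}$, the subsequent contribution to $S$ is, to leading order, a sum of $z$ iid copies of the total progeny of a single particle (the post-$\tau_t$ immigrants contribute negligibly because $\nu-\tau_t$ is geometrically small by Lemma~\ref{lem:rtime}). Each such total progeny has tail $\sim \mathcal{C}_2(\a)\,y^{-\a}$ in view of Lemma~\ref{lem:kesten}, and a Nagaev-type estimate for iid regularly-varying sums then yields
$$
	\P\!\left(S>x\mid Z_{\tau_t}=z,\,\tau_t<\nu\right)\;\sim\;\mathcal{C}_2(\a)\,z^{\a}\,x^{-\a}
$$
uniformly in $z$ in the relevant window. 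Integrating over $Z_{\tau_t}$ and letting $t\to\8$ recovers the stated constant $\mathcal{C}_3(\a)$.

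The main obstacle, specifically for $\a\ge 2$, is justifying this Nagaev-type asymptotic uniformly in $z$: it requires $L^{\a+\delta}$ control on the total progeny of a single particle, which is exactly where the assumption $\EE A^{\a+\delta}<\8$ is used, in combination with the exponential excursion-length bound of Lemma~\ref{lem:rtime}. A secondary technical point is the existence of $\lim_{t\to\8}\EE[Z_{\tau_t}^\a\mid \tau_t<\nu]$ and its finiteness, which can be read off renewal-theoretic control on the high-level overshoot of the perpetuity developed in~\cite{buraczewski2015pointwise,buraczewski2016large}.
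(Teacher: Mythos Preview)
Your outline has a genuine conceptual gap in the main term. Conditional on $\{\tau_t<\nu,\ Z_{\tau_t}=z\}$, the $z$ surviving particles are \emph{not} iid under the annealed law: they all evolve in the \emph{same} fresh iid environment $(\o_{\tau_t},\o_{\tau_t+1},\dots)$. Consequently their combined total progeny is, to leading order, $z\cdot \wY_\8'$ for a single fresh perpetuity $\wY_\8'$, not a sum of $z$ independent perpetuities. This is precisely why the correct factor is $z^{\a}$: one has $\P(z\,\wY_\8'>x)\sim \mathcal{C}_2(\a)\,z^{\a}x^{-\a}$. By contrast, a Nagaev single-big-jump estimate for $z$ genuinely iid regularly-varying summands would give a factor $z$, not $z^{\a}$; so your stated conclusion and your stated mechanism are mutually inconsistent. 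Relatedly, the claim of ``$L^{\a+\delta}$ control on the total progeny of a single particle'' cannot hold: the total progeny has tail of order $x^{-\a}$ by the very Lemma you are proving (or by Lemma~\ref{lem:kesten}), so its $(\a+\delta)$-moment is infinite.

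What actually makes the Kesten--Kozlov--Spitzer argument go through for $\a\ge 2$ is not a Nagaev bound, but the approximation of the branching process by its quenched mean. One needs uniform $L^{\a}$-control of the martingale differences $Z_{1,n}-A_{n-1}Z_{1,n-1}$, which is exactly the content of Lemma~\ref{lem:1} (second part): $\E\big|Z_{1,n}-A_{n-1}Z_{1,n-1}\big|^{\a}\le C\gamma^{n}$ for some $\gamma<1$. Together with Lemma~\ref{lem:2} this lets you replace $\wZ$ by the perpetuity and then invoke Lemma~\ref{lem:kesten}. The paper's own proof is just this: follow \cite{kesten1975limit} verbatim and substitute Lemma~\ref{lem:1} wherever a bound on $\E\big|Z_{1,n}-A_{n-1}Z_{1,n-1}\big|^{\a}$ is required. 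Your decomposition via $\tau_t$ and the treatment of $\{\tau_t\ge\nu\}$ are fine and in the spirit of \cite{kesten1975limit}; the repair needed is only in the main term.
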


	One way to approach with $\{Z_n\}_{n \geq 0}$ is via the renewal times, $\nu_0=0$, $\nu_1 = \nu$, and
	$$
		\nu_{i+1} = \inf\{ k > \nu_{i} \: | \:  Z_k = 0 \}.
	$$
	Let $N(n) = \# \{ k \: | \: \nu_k < n \}$. One has a natural way to decompose $W_n$,
	$$
		W_n = \sum_{k= 1}^{N(n)} \sum_{j=\nu_{k-1}}^{\nu_k-1} Z_k + \sum_{j= N(n)+1}^n \wZ^j_{j,\infty}.
	$$
	By an appeal to Lemma~\ref{lem:kks} we see that the first term on the right-hand side is a sum of iid terms with $\alpha$-regularly varying tails. Whence, one can expect that
	$$
		\P\left(W_n>x\right) \sim \P\left(\sum_{k=1}^{N(n)} \sum_{j=\nu_{k-1}}^{\nu_k-1} Z_k > x \right) \sim \frac{n}{\E \nu} \P\left(\sum_{k=0}^{\nu-1} Z_k > x\right)
			\sim \frac{ \mathcal{C}_3(\a) n x^{-\a}}{\E \nu}.
	$$
	This heuristic argument gives the correct order, as verified by Theorem~\ref{thm:wn}. However, due to the fluctuations of $\nu_i$'s, a rigorous argument is more complicated
	than expected. For this reason, we will proceed in a slightly different fashion.\medskip

\medskip

	Large deviations of $W_n$ are caused by deviations of the environment. Whence we need to have a good understanding of the latter. We will start with deviations of the multiplicative
	random walk $\{ \Pi_{n}\}_{n \geq 0}$. Here the answer is given by the Bahadur, Rao theorem \cite{dembo2010large}. To state it, denote
	$$
		\lambda(s) = \E[A^s] \quad \mbox{and} \quad \Lambda(s) = \log \E A^s
	$$
	with the domain $[0, \alpha_{\infty})$, where $\alpha_{\infty} = \sup\{ s: \;  \EE A^s <\infty \}$. Recall the Legendre-Fenchel transform of $\Lambda$ defined via the formula
	$$
		\Lambda^*(\rho) = \sup_{s \in \R} \{  s\rho - \Lambda(s) \}.
	$$

\begin{lem}\label{lem: petrov1}
	If the assumptions of Theorem~\ref{thm:main1} are satisfied then
	\begin{equation*}
		\PP\left(\Pi_n > e^{n\rho} \right) \sim
			\frac{c_\rho}{ \sqrt{ n}}e^{-n\Lambda^*(\rho)}
	\end{equation*}
	for some constant $c_\rho$, where $\EE \log(A)<\rho< \rho_{\infty} = \sup_{0<s<\alpha_{\infty}} \Lambda'(s)$. Moreover the convergence is almost uniform in $\rho$.
\end{lem}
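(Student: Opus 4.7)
The plan is to reduce the statement to the classical Bahadur--Rao sharp large deviation theorem for sums of i.i.d.\ random variables. Since $\Pi_n$ is a product of i.i.d.\ copies of $A$, its logarithm $S_n := \log \Pi_n = \sum_{j=0}^{n-1}\log A_j$ is a one-dimensional random walk whose increments have cumulant generating function $\Lambda(s) = \log \EE A^s$, which by the moment assumption $\EE A^{\a+\delta}<\infty$ is finite and real-analytic on a neighborhood of $[0,\a]$. The event $\{\Pi_n > e^{n\rho}\}$ is exactly $\{S_n > n\rho\}$, placing us in the Cramér large deviation regime for $\rho > \EE \log A$.

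The first step is an exponential change of measure: for $s\in (0,\a_\infty)$ define $\PP_s$ by $\frac{d\PP_s}{d\PP}\big|_{\sigma(A_0,\dots,A_{n-1})} = e^{sS_n - n\Lambda(s)}$, under which $\log A_0,\dots,\log A_{n-1}$ remain i.i.d.\ with mean $\Lambda'(s)$ and variance $\Lambda''(s)$. Choosing $s_\rho$ with $\Lambda'(s_\rho)=\rho$ yields
\begin{equation*}
\PP(S_n > n\rho) \;=\; e^{-n\Lambda^*(\rho)}\, \EE_{s_\rho}\!\bigl[\, e^{-s_\rho(S_n-n\rho)} \ind_{\{S_n>n\rho\}}\,\bigr].
\end{equation*}
The second step is to evaluate the tilted expectation. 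Writing $R_n = S_n - n\rho$, which under $\PP_{s_\rho}$ is a centered random walk with variance $n\Lambda''(s_\rho)$, an integration by parts gives
\begin{equation*}
\EE_{s_\rho}\!\bigl[e^{-s_\rho R_n}\ind_{\{R_n>0\}}\bigr] \;=\; s_\rho \int_0^\infty e^{-s_\rho y}\,\PP_{s_\rho}(0 < R_n \le y)\,dy.
\end{equation*}
Substituting $y = u\sqrt{n\Lambda''(s_\rho)}$ and invoking Stone's local central limit theorem (applicable because $\log A$ is nonarithmetic) replaces $\PP_{s_\rho}(0<R_n\le u\sqrt{n\Lambda''(s_\rho)})$ by $\Phi(u)-\Phi(0)$ up to a vanishing Berry--Esseen error, and the integral evaluates to $(s_\rho \sqrt{2\pi n\,\Lambda''(s_\rho)})^{-1}(1+o(1))$. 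This identifies $c_\rho = (s_\rho \sqrt{2\pi\,\Lambda''(s_\rho)})^{-1}$.

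For the almost uniform convergence, observe that on any compact $[\rho_1,\rho_2]\subset(\EE\log A,\rho_\infty)$ the quantities $s_\rho$, $\Lambda''(s_\rho)$ and $\Lambda^*(\rho)$ depend smoothly on $\rho$ and stay away from their degenerate values, and the third absolute moment of $\log A$ under $\PP_{s_\rho}$ is uniformly bounded; hence the Berry--Esseen and local CLT inputs are uniform in $\rho$. The only delicate point is propagating this uniformity cleanly through the integration by parts, but this is routine once the tilting constants are controlled; in fact the desired statement is essentially Theorem~3.7.4 of Dembo--Zeitouni, so one could instead cite it directly and simply verify continuous dependence of $c_\rho$ in $\rho$. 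The main conceptual obstacle is therefore minor: it is the uniformity rather than the asymptotic itself.
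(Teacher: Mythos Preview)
Your proposal is correct and matches the paper's approach: the paper does not give a proof at all but simply attributes the lemma to the Bahadur--Rao theorem, citing Dembo--Zeitouni, and you sketch precisely that classical argument (exponential tilting at $s_\rho=(\Lambda')^{-1}(\rho)$ followed by a Berry--Esseen/CLT evaluation of the tilted integral), even noting yourself that one could cite Theorem~3.7.4 of Dembo--Zeitouni directly. Your sketch is a faithful outline of that proof; the only minor imprecision is calling the input ``Stone's local CLT'' when what you actually use is the ordinary Berry--Esseen bound for the distribution function, but this does not affect correctness.
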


	If we note that $\min \Lambda^*(\rho) = \Lambda^*(\rho_0) = \rho_0\alpha$,  where $\rho_0 = \Lambda'(\alpha)$, the result above suggests that for given $x$, the
	probability of the event $\{ \Pi_n >  x  \}$  is the largest for
	$$
		n_0 =\left\lfloor \frac{\log x}{\rho_0}\right\rfloor.
	$$
	Then we have
	$$
		\PP\left(\Pi_{n_0} > x \right) \sim \frac{C}{\sqrt{\log(x)}}x^{-\alpha}.
	$$
	Moreover, the probability that a large deviation happens outside some neighbourhood of $n_0$ in negligible. To be precise let
	$$
		m = \big\lfloor (\log x)^{1/2+\delta} \big\rfloor\quad \mbox{for small $\delta>0$}
	$$
	and consider the following Lemma.

\begin{lem}
	Let $n_1 = n_0 - m$ and $n_2 = n_0 + m$ for $m = \big\lfloor (\log x)^{1/2+\delta} \big\rfloor$ and any small $\delta>0$.
	$$
		\P\left( \sup_{k \in [n_1, n_2]}\Pi_k > x \right) \sim C x^{-\a}
	$$
	and
	$$
		\P\left( \sup_{k\notin  [n_1, n_2]}\Pi_k > x \right) =  o(x^{-\a}).
	$$
\end{lem}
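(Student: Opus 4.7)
My plan is to use exponential tilting combined with renewal theory for the random walk $\log \Pi_n$. Specifically, introduce the tilted measure $\wt\PP$ with density $d\wt\PP/d\PP|_{\F_n} = \Pi_n^\alpha$ (a probability because $\lambda(\alpha)=1$); under $\wt\PP$ the increments $\log A_i$ are iid with positive mean $\rho_0 = \Lambda'(\alpha)$ and finite variance $\sigma_0^2 = \Lambda''(\alpha)$ (using $\EE A^{\alpha+\delta}<\infty$). For the first passage time $T = \inf\{k:\Pi_k > x\}$, which is $\wt\PP$-a.s.\ finite, the fundamental identity reads
\[
\PP(T \in B) = \wt\EE\bigl[\Pi_T^{-\alpha}\,\ind(T \in B)\bigr], \qquad B \in \F_T.
\]
Everything I do follows from this identity together with moderate deviations for $T$ under $\wt\PP$ and the renewal theorem for the overshoot $\Pi_T/x$.

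A moderate deviation estimate under $\wt\PP$ (available because $\log A$ has finite exponential moments on $[0,\alpha+\delta]$ under $\PP$, hence on a neighbourhood of $0$ under $\wt\PP$) yields $\wt\PP(|T-n_0|>m)\le \exp(-c(\log x)^{2\delta}) = o(1)$. The renewal theorem applied to $\log \Pi_k$ (valid since $\log A$ is nonarithmetic) shows that the overshoot $\log\Pi_T - \log x$ converges weakly under $\wt\PP$ to a stationary limit $U_\infty$; since $\Pi_T^{-\alpha}\le x^{-\alpha}$ bounded convergence gives $\wt\EE[(\Pi_T/x)^{-\alpha}\ind(T\in[n_1,n_2])] \to C := \wt\EE[e^{-\alpha U_\infty}] > 0$. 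Combining, $\PP(T\in[n_1,n_2]) = Cx^{-\alpha}(1+o(1))$ while $\PP(T<n_1) \le x^{-\alpha}\wt\PP(T<n_1) = o(x^{-\alpha})$. The first assertion now follows from the sandwich
$\{T\in[n_1,n_2]\}\subseteq\{\sup_{k\in[n_1,n_2]}\Pi_k>x\}\subseteq\{T\in[n_1,n_2]\}\cup\{T<n_1\}$,
and the piece of the second assertion with $k<n_1$ is immediate since $\{\sup_{k<n_1}\Pi_k>x\}=\{T<n_1\}$.

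The hard part will be $\PP(\sup_{k>n_2}\Pi_k>x)=o(x^{-\alpha})$. Using independence after time $n_2$, I would write this as $\PP(\Pi_{n_2} M^*>x)$, where $M^* := \sup_{j\ge 1}\prod_{i=n_2+1}^{n_2+j}A_i$ is independent of $\F_{n_2}$ and obeys $\PP(M^*>y)\sim C'y^{-\alpha}$ by Kesten-type asymptotics for suprema of multiplicative walks. Decompose according to $\Pi_{n_2}$: Lemma~\ref{lem: petrov1} applied at $n_2=n_0+m$ gives $\PP(\Pi_{n_2}>x/K)=o(x^{-\alpha})$ for any fixed $K$, handling both $\{\Pi_{n_2}>x\}$ and the intermediate regime $\Pi_{n_2}\in(x/K,x]$. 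On the remaining event $\{\Pi_{n_2}\le x/K\}$ with $K$ large, conditioning on $\Pi_{n_2}$ and using the Kesten tail of $M^*$ gives
\[
\PP\bigl(\Pi_{n_2}M^*>x,\,\Pi_{n_2}\le x/K\bigr)\;\sim\;C'\,x^{-\alpha}\,\EE\bigl[\Pi_{n_2}^\alpha\ind(\Pi_{n_2}\le x/K)\bigr] \;=\; C'\,x^{-\alpha}\bigl(1-\wt\PP(\Pi_{n_2}>x/K)\bigr).
\]
The crucial observation is that under $\wt\PP$, $\log\Pi_{n_2}$ has mean $\log x + m\rho_0 + O(1)$ and standard deviation of order $\sqrt{n_2}\,\sigma_0$ with $m\rho_0/\sqrt{n_2}\to\infty$, so $\wt\PP(\Pi_{n_2}>x/K)\to 1$ and the bracket vanishes. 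This cancellation is the real subtlety: a naive Breiman-type reading would yield $C'x^{-\alpha}\EE\Pi_{n_2}^\alpha = C'x^{-\alpha}$, missing the $o(x^{-\alpha})$ decay entirely; Breiman's lemma fails to apply uniformly in $x$ because $\EE\Pi_{n_2}^{\alpha+\varepsilon}=\lambda(\alpha+\varepsilon)^{n_2}$ blows up with $x$, and it is the truncation $\ind(\Pi_{n_2}\le x/K)$ that strips the mass of $\Pi_{n_2}^\alpha$ precisely because the tilt has pushed the walk well past $\log x$ by time $n_2$.
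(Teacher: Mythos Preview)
Your argument is correct and is, in fact, a genuinely self-contained proof where the paper offers none: the paper simply states that the first assertion ``can be deduced from the arguments leading up to Lemma~3.9 in~\cite{buraczewski2017precise}'' and that the second ``follows directly from Lemma~\ref{lem:ruina}''. So your change-of-measure / renewal-theoretic treatment of the first passage time $T$ is a real contribution over what the paper provides, and the mechanism you isolate --- that $\wt\EE[(\Pi_T/x)^{-\alpha}]\to C>0$ while $\wt\PP(T\notin[n_1,n_2])\to 0$ --- is exactly the right picture.

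The one place where your route and the paper's differ substantively is the bound for $\sup_{k>n_2}\Pi_k$. The paper's route is shorter: since every $\Pi_k$ with $k>n_2$ is a summand of $\wY_\infty-\wY_{n_2}=\sum_{j>n_2}\Pi_{0,j}$, one has the trivial domination $\sup_{k>n_2}\Pi_k\le \wY_\infty-\wY_{n_2}$, and Lemma~\ref{lem:ruina} (quoted from~\cite{buraczewski2016large}) gives $\P(\wY_\infty-\wY_{n_2}>x)=o(x^{-\alpha})$ directly. Your factorisation $\Pi_{n_2}M^*$ together with the identity $\EE[\Pi_{n_2}^\alpha\ind(\Pi_{n_2}\le x/K)]=\wt\PP(\Pi_{n_2}\le x/K)\to 0$ is more elaborate but also more transparent: it shows \emph{why} the tail past $n_2$ is negligible (under the tilt, $\log\Pi_{n_2}$ has already overshot $\log x$ by order $m\rho_0\gg\sqrt{n_2}$), rather than deferring to an external perpetuity estimate. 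Both work; the paper's is quicker if one grants Lemma~\ref{lem:ruina}, yours is self-contained.

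Two minor points worth tightening. First, your appeal to ``Lemma~\ref{lem: petrov1}'' for $\PP(\Pi_{n_2}>x/K)=o(x^{-\alpha})$ hides a short Taylor expansion of $\Lambda$ at $\alpha$ (the same one the paper carries out in the proofs of Lemmas~\ref{lem:6} and~\ref{lem:7}); it would be worth spelling out that $n_2\Lambda^*(\rho)=\alpha\log x + c\,m^2/\log x + O(1)$ with $m^2/\log x=(\log x)^{2\delta}\to\infty$. Second, the moderate-deviation bound $\wt\PP(T<n_1)=o(1)$ needs a word about the running maximum (e.g.\ a union bound $\sum_{k<n_1}\wt\PP(S_k>\log x)$ with a Chernoff estimate and the same Taylor expansion), since standard moderate deviations are stated for $S_n$, not for $T$.
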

	
	The first statement can be deduced from the arguments leading up to Lemma 3.9 in~\cite{buraczewski2017precise}. The second statement follows directly from Lemma~\ref{lem:ruina} stated below.

\medskip

	Deviations of $\{\Pi_n\}_{n \geq 0}$ and $\{Y_n\}_{n \geq 0}$ are closely related. The former is most likely to deviate at $n\approx n_0$ and so is the latter. More precisely, as
	proven in Section $4$ of~\cite{buraczewski2016large} a large deviation on $Y_n$ is most likely to happen for $n$ in some neighbourhood of $n_0$.

\begin{lem}\label{lem:ruina}
	Let $n_1 = n_0 - m$ and $n_2 = n_0 + m$ for $m = \big\lfloor (\log x)^{1/2+\delta} \big\rfloor$ and any small $\delta>0$. Then
	$$
		\P\left( \wY_{n_1} > x \right) = o(x^{-\a})
	$$
	and
	$$
		\P\left( \wY_\8 - \wY_{n_2} > x \right) = o(x^{-\a}).
	$$
\end{lem}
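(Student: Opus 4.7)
My plan is to handle the two estimates independently: the lower–time estimate by an optimized Markov bound at an exponent slightly above $\a$, and the upper–time estimate by a distributional decomposition of $\wY_\infty$ together with an exponential (Esscher) tilt.

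For $\P(\wY_{n_1} > x) = o(x^{-\a})$, I would apply Markov's inequality at an exponent $p = \a + \eta_x$ with $\eta_x>0$ to be optimized in $x$ and chosen so that $\EE A^p$ is finite (using $\EE A^{\a+\delta}<\infty$). Either Minkowski's inequality (for $p>1$) or subadditivity of $t\mapsto t^p$ (for $p\leq 1$), combined with the geometric summation of the moments $\EE\Pi_{0,j}^p = (\EE A^p)^{j+1}$, yields
$$
\EE \wY_{n_1}^p \leq C_p (\EE A^p)^{n_1+1} = C_p e^{(n_1+1)\Lambda(p)},
$$
where $C_p$ grows at most polynomially in $\log x$ after the optimization. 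Expanding $\Lambda(p) = \rho_0(p-\a) + \tfrac12\Lambda''(\a)(p-\a)^2 + O((p-\a)^3)$ and using $n_0\rho_0 = \log x + O(1)$ with $n_1 = n_0 - m$,
$$
\log\P(\wY_{n_1} > x) \leq -\a\log x - m\rho_0(p-\a) + \tfrac{n_1}{2}\Lambda''(\a)(p-\a)^2 + O(\log\log x).
$$
Optimizing at $p-\a = m\rho_0/(n_1\Lambda''(\a)) \asymp m/\log x$ gives
$$
\log\P(\wY_{n_1} > x) \leq -\a\log x - \tfrac{m^2\rho_0^2}{2 n_1 \Lambda''(\a)} + O(\log\log x) = -\a\log x - c(\log x)^{2\delta} + O(\log\log x),
$$
which is $o(x^{-\a})$ since $m^2/\log x = (\log x)^{2\delta}\to\infty$.

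For $\P(\wY_\infty - \wY_{n_2} > x) = o(x^{-\a})$, I would exploit the distributional identity
$$
\wY_\infty - \wY_{n_2} \stackrel{d}{=} \Pi_{0,n_2}\cdot \wY_\infty',
$$
with $\wY_\infty'\stackrel{d}{=}\wY_\infty$ independent of $\Pi_{0,n_2}$; this follows by factoring $\Pi_{0,j} = \Pi_{0,n_2}\cdot A_{n_2+1}\cdots A_j$ for $j>n_2$ and using the iid property of the $A_i$'s. I would split $\{\Pi_{0,n_2}\wY_\infty' > x\}$ at the threshold $\Pi_{0,n_2} = x$. On $\{\Pi_{0,n_2} > x\}$, Lemma~\ref{lem: petrov1} applied at $\rho = \log x / n_2 = \rho_0 - m\rho_0^2/\log x + O(m^2/(\log x)^2)$, together with $\Lambda^*(\rho_0 - \eps) = \a\rho_0 - \a\eps + \tfrac12 (\Lambda^*)''(\rho_0)\eps^2 + O(\eps^3)$, yields $n_2\Lambda^*(\log x/n_2) = \a\log x + c(\log x)^{2\delta} + O(1)$, hence $\P(\Pi_{0,n_2}>x) = o(x^{-\a})$. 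On $\{\Pi_{0,n_2}\leq x\}$, conditioning on $\Pi_{0,n_2}$ and using the universal upper bound $\P(\wY_\infty'>t)\leq Ct^{-\a}$ (a consequence of Lemma~\ref{lem:kesten}) gives
$$
\P(\Pi_{0,n_2}\wY_\infty' > x,\ \Pi_{0,n_2}\leq x) \leq C x^{-\a}\EE\bigl[\Pi_{0,n_2}^\a \ind\{\Pi_{0,n_2}\leq x\}\bigr] = C x^{-\a}\widehat\P(\Pi_{0,n_2}\leq x),
$$
where $\widehat\P$ is the Esscher tilt of density $\Pi_{0,n_2}^\a$ (a probability measure since $\EE\Pi_{0,n_2}^\a=1$). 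Under $\widehat\P$, $\log\Pi_{0,n_2}$ is a sum of $n_2+1$ iid variables with mean $\rho_0$ and variance $\Lambda''(\a)$, so its total mean equals $\log x + m\rho_0 + O(1)$ and its variance is of order $\log x$. The deviation $\log x - (n_2+1)\rho_0 \approx -m\rho_0$ is of size $(\log x)^\delta$ standard deviations, so moderate-deviation (or Cram\'er) estimates yield $\widehat\P(\log\Pi_{0,n_2}\leq\log x)\leq e^{-c(\log x)^{2\delta}}$, closing the bound.

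The main obstacle is that the saving to be extracted is only of order $(\log x)^{2\delta}$, which is precisely the moderate-deviation scale of the random walk $\log\Pi_{0,n}$. Consequently the quadratic terms in the Taylor expansions of $\Lambda$ and $\Lambda^*$ around the critical points cannot be discarded, and the $O$-error terms in Lemma~\ref{lem: petrov1} and in the Esscher change of measure must be controlled uniformly in the relevant narrow range of parameters. This uniformity is carefully developed for the forward process $\{Y_n\}_{n\geq 0}$ in \cite{buraczewski2016large}, and the same machinery—applied to the reversed walk governing $\wY_n$—drives the argument here.
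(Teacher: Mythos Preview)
Your argument is correct. The paper itself does not prove this lemma but cites Section~4 of \cite{buraczewski2016large}; however, the paper does prove the closely analogous statements for $\wZ$ in Lemmas~\ref{lem:6} and~\ref{lem:7}, and comparison with those proofs is instructive.

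Your treatment of the first estimate is essentially the paper's method: a Markov bound at exponent $\a+\eps$ combined with the Taylor expansion $\Lambda(\a+\eps)=\rho_0\eps+O(\eps^2)$. The paper simply takes $\eps=(\log x)^{-1/2}$, which yields a saving of order $(\log x)^{\delta}$; your optimized choice $\eps\asymp m/\log x$ squeezes out $(\log x)^{2\delta}$, but either suffices. One minor point: your prefactor $C_p$ coming from Minkowski behaves like $(p-\a)^{-p}$, so $\log C_p=O(\log\log x)$ as you state, but this deserves an explicit line since it is exactly the term that must be beaten.

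For the second estimate your route---factoring $\wY_\infty-\wY_{n_2}=\Pi_{0,n_2}\wY_\infty'$, splitting at $\Pi_{0,n_2}=x$, and controlling the truncated $\a$-moment via the Esscher tilt---is valid and conceptually pleasant, but heavier than necessary. The symmetric Markov bound at exponent $\a-\eps$ works just as well: since $\lambda(\a-\eps)<1$, Minkowski (or subadditivity) gives $\E[(\wY_\infty-\wY_{n_2})^{\a-\eps}]\le C_\eps\,\lambda(\a-\eps)^{n_2}$, and the same Taylor computation (now with $n_2=n_0+m$) produces the saving $-m\rho_0\eps+O(n_2\eps^2)$. This is exactly how the paper handles the corresponding tail of $\wZ$ in Lemma~\ref{lem:7}, and it avoids invoking Bahadur--Rao and the tilted moderate-deviation estimate altogether. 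Your approach has the advantage of making the probabilistic mechanism (the walk $\log\Pi_{0,n}$ overshooting its typical level under the tilt) transparent, at the cost of more machinery.
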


	Since the deviations of $\{\wZ_k\}_{k \geq 0}$ are mostly caused by the environment, one expects an analogue of Lemma~\ref{lem:ruina} for the total population size of a branching
	process in random environment. This is in fact the case as we have proven in Lemma 5.3 in~\cite{buraczewski2017precise}. The following Lemma is a direct consequence of Lemmas \ref{lem:6} and \ref{lem:7} given in the next section.

\begin{lem} \label{lem:negwZ}
	Let $n_1 = n_0 - m$ and $n_2 = n_0 + m$ for $m = \big\lfloor (\log x)^{1/2+\delta} \big\rfloor$ and any small $\delta>0$. Then
	$$
		\P\left( \wZ_{n_1} > x \right) = o(x^{-\a})
	$$
	and
	$$
		\P\left( \wZ^{1}_{n_2, \infty} > x \right) = o(x^{-\a}).
	$$
\end{lem}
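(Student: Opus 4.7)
The plan is to reduce both tail estimates to Lemma \ref{lem:ruina} by conditioning on the environment. The crucial observation is that the quenched means of the quantities in question are exactly the perpetuity quantities for which Lemma \ref{lem:ruina} already provides the desired $o(x^{-\alpha})$ tails. From the formulas in Section \ref{sec:brrei},
$$
\E_\o \wZ_{n_1} = \wY_{n_1-1}, \qquad \E_\o \wZ^1_{n_2,\infty} = \sum_{j \ge n_2-1}\Pi_{0,j} = \wY_\8 - \wY_{n_2-2}.
$$
Hence, for the first claim, a union bound gives
$$
\P\big(\wZ_{n_1} > x\big) \le \P\big(\wY_{n_1-1} > x/2\big) + \E\Big[\P_\o\big(\wZ_{n_1} - \wY_{n_1-1} > x/2\big)\Big],
$$
and the first term on the right is already $o(x^{-\alpha})$ by the first part of Lemma \ref{lem:ruina}.

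To control the conditional fluctuation term, I would pick an exponent $p \in (\alpha, \alpha+\delta)$, which is admissible because the geometric offspring distributions have all moments and $\E A^{\alpha+\delta} < \8$ by assumption. A quenched Markov inequality then yields
$$
\P_\o\big(\wZ_{n_1} - \wY_{n_1-1} > x/2\big) \le (2/x)^{p}\, \E_\o\big|\wZ_{n_1} - \wY_{n_1-1}\big|^{p}.
$$
The content of Lemmas \ref{lem:6} and \ref{lem:7}, stated in the next section, is a Rosenthal/Burkholder-type bound for this centered quenched moment: writing $\wZ_{n_1}$ as a sum of branching population sizes and organizing the increments as martingale differences with quenched conditional variance proportional to $\Pi_{0,j}$, one expects a bound of the form
$$
\E_\o\big|\wZ_{n_1} - \wY_{n_1-1}\big|^{p} \le C\Big(\sum_{j=0}^{n_1-1}\Pi_{0,j}\Big)^{p/2} + C\sum_{j=0}^{n_1-1}\Pi_{0,j}^{p},
$$
or a similar expression built from partial products of the $A_i$'s up to time $n_1$.

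Taking annealed expectation, the key point is that $n_1 = n_0 - m$ with $m \asymp (\log x)^{1/2+\delta}$ sits strictly below the critical scale at which $\Pi_{n_1}$ would naturally reach $x$: by Lemma \ref{lem: petrov1} the probability that $\Pi_k$ is comparable to $x$ for $k \le n_1$ decays by a super-polynomial factor faster than $x^{-\alpha}$, and the same improvement transfers to the $p$th moments of the sums of $\Pi_{0,j}$ restricted to $[0,n_1]$, using also that $\E A^{s} \le 1$ for $s$ close to $\alpha$. This gives $\E\big|\wZ_{n_1} - \wY_{n_1-1}\big|^{p} = o(x^{p-\alpha})$, which, combined with the factor $x^{-p}$, produces the required $o(x^{-\alpha})$ bound. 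The second assertion, for $\wZ^1_{n_2,\infty}$, follows by the same scheme: replace $\wY_{n_1-1}$ by $\wY_\8 - \wY_{n_2-2}$, invoke the second inequality of Lemma \ref{lem:ruina}, and apply the analogous Rosenthal-type bound to the infinite tail of the branching process, where summability of the quenched moments is ensured by subcriticality ($\E\log A<0$).

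The main obstacle will be establishing the quenched $p$th-moment bound for the centered total population in a form flexible enough to cover both $\alpha \le 2$, where a second-moment computation would suffice, and $\alpha > 2$, where one needs a genuine Rosenthal-type inequality for the martingale differences driving the branching process in the random environment. This is the substance of Lemmas \ref{lem:6} and \ref{lem:7}; once these are in place, the remaining work — splitting by the quenched mean, invoking Lemma \ref{lem:ruina}, and integrating the residual moments using Lemma \ref{lem: petrov1} — is essentially bookkeeping.
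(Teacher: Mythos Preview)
Your approach is substantially more elaborate than the paper's, and it rests on a misreading of what Lemmas~\ref{lem:6} and~\ref{lem:7} actually say. They are \emph{not} Rosenthal/Burkholder bounds on centered quenched moments; they are direct tail bounds on the \emph{uncentered} sums $\sum_{j=1}^{n_1}\wZ^j_{j,j+n_1}$ and $\sum_{j=1}^{n_1}\wZ^j_{j+n_2,\infty}$, obtained by a plain Markov inequality with a well-chosen exponent. Specifically, one uses Lemma~\ref{lem:1} ($\E Z_{1,k}^{s}\le c\,\lambda(s)^{k}$), applies Markov at order $\alpha+\eps$ (respectively $\alpha-\eps$) with $\eps=(\log x)^{-1/2}$, and Taylor-expands $\Lambda(\alpha\pm\eps)=\pm\rho_0\eps+O(\eps^2)$; the $m=(\log x)^{1/2+\delta}$ shift in $n_1,n_2$ then produces the extra factor $e^{-C(\log x)^{\delta}}$. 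Since $\wZ_{n_1}$ and $\wZ^1_{n_2,\infty}$ are dominated by single summands of those sums, the lemma follows immediately. No splitting into quenched mean plus fluctuation, and no appeal to Lemma~\ref{lem:ruina}, is needed.

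Your route---center by $\wY_{n_1-1}$, invoke Lemma~\ref{lem:ruina}, then control the fluctuation by a $p$th-moment martingale bound---could in principle be pushed through, but the justification you sketch is incomplete. The claim ``$\E A^{s}\le 1$ for $s$ close to $\alpha$'' is false for $s>\alpha$, which is exactly the range you need for the fluctuation term; what actually saves you is the same Taylor computation as above, namely $\lambda(p)^{n_1}\approx x^{p-\alpha}e^{-(p-\alpha)\rho_0(\log x)^{1/2+\delta}}$. Once you write that down, you see the centering step and the Rosenthal machinery are redundant: the very same exponent trick already kills the \emph{whole} of $\wZ_{n_1}$ without first subtracting its quenched mean. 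In short, the paper's argument is the streamlined version of what you are attempting, and you should replace your decomposition by the direct Markov bound at order $\alpha\pm(\log x)^{-1/2}$.
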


	As a consequence, the significant part of $\wZ^k_{k,\infty}$, the total progeny of the population initiated by the $k$th immigrant conditioned on $\{ \wZ^k_{k,\infty} > x\}$, is $\wZ^{k}_{n_1+k,n_2+k}$. Whence, the
	dominant part of $W_n$ is expected to be
	$$
		\sum_{k=1}^n \wZ^k_{n_1+k,n_2+k}.
	$$
	The key feature that we will exploit is that for $n_2<|i-j|$,  $\wZ^i_{n_1+i,n_2+i}$ and $\wZ^j_{n_1+j,n_2+j}$ are independent with respect to the annealed probability $\P$.
	The strategy is to group $\wZ^i_{n_1+i,n_2+i}$'s into blocks of length $n_1$,
	\begin{equation*}
 		\W_k  = \sum_{j=(k-1)n_1}^{kn_1-1} \wZ^j_{j+n_1,j+n_2}
	\end{equation*}
	for $k=1,\ldots, p$, with $p=\lfloor n/n_1\rfloor $ and
	\begin{equation*}
  		\W_{p+1} =  \sum_{j=pn_1}^{n} \wZ^j_{j+n_1,j+n_2}
	\end{equation*}
	so that
	\begin{equation*}
		\sum_{k=1}^{p+1}\W_k = \sum_{k=1}^n \wZ^k_{n_1+k,n_2+k}.
	\end{equation*}
	We will benefit from the fact  that $\{\W_k\}_{1\leq k \leq p+1}$ forms a two-dependent sequence, i.e. for any $1\leq i \leq p-1 $, $\{\W_k\}_{1\leq k \leq i}$ and
	$\{\W_k\}_{i+3\leq k \leq p+1}$ are independent. Furthermore, $\{\W_k\}_{1\leq k \leq p}$ have the same distribution.  With this set-up, after the investigation of the asymptotic
	behaviours of $\W_1$ and the random vector $(\W_i, \W_{i \pm 1})$ we will be able to prove Theorem~\ref{thm:wn}.

\section{Preliminaries}

	One of the reasons $\{\wZ_k\}_{k\geq 0}$ has the same asymptotic behaviour of $\{\wY_k\}_{k \geq 0}$ is that in some regimes, one can successfully approximate one by the other. 	
	Throughout the article we will benefit from this phenomenon via next two Lemmas, first of which was proved in~\cite{buraczewski2017precise} as Proposition 3.1 and Corollary 3.2.

\begin{lem}\label{lem:1}
	Assume $\Lambda(\alpha) =0$ for some $\alpha>0$. Then one can find $\alpha_1$, $\alpha_2$ and $c$ such that $ 0<\alpha_1< \alpha<\alpha_2$ and for any
	$s \in [\alpha_1,\alpha_2]$ and any $n\geq 0$,
	$$
		\E Z_{1,n}^s \leq c (\lambda(s))^n.
	$$
	Moreover, if $\alpha > 1$, then 
	$$
		\E  \big|  Z_{1,n} - A_{n-1} Z_{1,n-1}\big|^\a \le C  \gamma^n,
	$$
	for some $\gamma < 1$ and a positive, finite constant $C$.
\end{lem}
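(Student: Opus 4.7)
The plan is to establish both statements by splitting the exponent range at $s = 1$. First I would fix the neighborhood $[\alpha_1, \alpha_2] \subset (0, \alpha+\delta)$ around $\alpha$ so that $\lambda(s)$ stays finite throughout, and, in the ballistic case $\alpha > 1$, small enough that $\lambda(s) > \rho := \lambda(1)$ for every $s$ in the interval; this is possible by continuity, since $\lambda(\alpha) = 1 > \rho$ in that regime. In the sub-ballistic case $\alpha \leq 1$ the neighborhood can be kept inside $(0, 1]$, and only the soft half of the argument below will be needed.

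For the first bound with $s \in (0, 1]$ the proof is a one-line Jensen computation: the quenched ratio $M_n := Z_{1,n}/\Pi_{0,n-1}$ is a $\P_\omega$-martingale with $\E_\omega M_n = 1$, so concavity of $x \mapsto x^s$ gives $\E_\omega M_n^s \leq 1$, equivalently $\E_\omega Z_{1,n}^s \leq \Pi_{0,n-1}^s$. Integrating over the environment and using the i.i.d.\ structure of the $A_i$'s yields $\E Z_{1,n}^s \leq \E \Pi_{0,n-1}^s = \lambda(s)^n$.

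For $s \in (1, \alpha_2]$ I would use the decomposition
$$
Z_{1,n} = A_{n-1} Z_{1,n-1} + \Delta_n, \qquad \Delta_n = \sum_{k=1}^{Z_{1,n-1}} (\xi_k^{n-1} - A_{n-1}),
$$
where $\Delta_n$ is conditionally centered given $(\omega, Z_{1,n-1})$. Minkowski in $L^s(\P)$, combined with the annealed independence of $A_{n-1}$ from $Z_{1,n-1}$, gives $\|Z_{1,n}\|_s \leq \lambda(s)^{1/s}\|Z_{1,n-1}\|_s + \|\Delta_n\|_s$. The term $\|\Delta_n\|_s$ is controlled by a conditional von Bahr--Esseen inequality when $s \in (1, 2]$, producing $\|\Delta_n\|_s \leq C \rho^{(n-1)/s}$, and by Rosenthal's inequality when $s > 2$, which adds a term $\|Z_{1,n-1}^{s/2} \sigma_{n-1}^s\|_1^{1/s}$ that becomes $C \lambda(s/2)^{(n-1)/s}$ after annealing and feeding in the first bound at exponent $s/2$. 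In either case $\|\Delta_n\|_s \leq C \eta_s^{(n-1)/s}$ with $\eta_s := \max(\rho, \lambda(s/2)) < \lambda(s)$ by the neighborhood choice, so iterating the Minkowski recursion telescopes a convergent geometric series into $\|Z_{1,n}\|_s \leq C \lambda(s)^{n/s}$. The ``moreover'' inequality is exactly the side estimate $\E|\Delta_n|^\alpha$ extracted from this step: Bahr--Esseen for $\alpha \in (1, 2]$ gives $\E|\Delta_n|^\alpha \leq C \rho^{n-1}$, and Rosenthal for $\alpha > 2$ gives $\E|\Delta_n|^\alpha \leq C(\rho^{n-1} + \lambda(\alpha/2)^{n-1})$, so one may take $\gamma = \max(\rho, \lambda(\alpha/2)) < 1$.

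The principal technical care concerns the neighborhood choice. When $s > 2$, the Rosenthal step needs the first bound at exponent $s/2$, which must itself lie in the valid range; the cleanest resolution is to prove the first inequality in increasing dyadic scales, starting from the unconstrained base case $s \in (0, 1]$ and extending via the Minkowski argument to $(1, 2]$, $(2, 4]$, and so on up to $\alpha_2$. At each scale the bound at half the exponent has already been established, so the chain closes in finitely many steps. Finiteness of the annealed offspring moments $\E|\xi - A|^s$ and $\E[A(1 + A)]^{s/2}$ for $s \in [\alpha_1, \alpha_2]$ is then routine from the explicit form of the geometric $s$-th moment and the standing assumption $\E A^{\alpha + \delta} < \infty$.
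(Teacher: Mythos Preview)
The paper does not give its own proof of this lemma: it simply records that the two claims are Proposition~3.1 and Corollary~3.2 of \cite{buraczewski2017precise}. So there is nothing to compare your argument against line by line; what matters is whether your sketch stands on its own. It does. The Jensen step for $s\le 1$ via the quenched martingale $Z_{1,n}/\Pi_{0,n-1}$ is exactly right, and for $s>1$ the Minkowski recursion $\|Z_{1,n}\|_s\le \lambda(s)^{1/s}\|Z_{1,n-1}\|_s+\|\Delta_n\|_s$, fed by von Bahr--Esseen (for $s\in(1,2]$) or Rosenthal (for $s>2$) applied conditionally on $(\omega,Z_{1,n-1})$, closes because $\eta_s=\max(\rho,\lambda(s/2))<\lambda(s)$ on the chosen neighborhood. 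The dyadic bootstrapping in $s$ is the right way to make the Rosenthal step self-consistent, and the ``moreover'' inequality drops out as the $s=\alpha$ instance of your $\|\Delta_n\|_s$ bound with $\gamma=\max(\rho,\lambda(\alpha/2))<1$.

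One small imprecision: your sentence ``in the sub-ballistic case $\alpha\le 1$ the neighborhood can be kept inside $(0,1]$'' fails at the boundary $\alpha=1$, since the lemma demands $\alpha_2>\alpha$. This is harmless, because your $s>1$ argument still goes through there: when $\alpha=1$ one has $\rho=\lambda(1)=1<\lambda(s)$ for $s>1$ by strict convexity, so the geometric series in the recursion still converges. Just adjust the wording to treat $\alpha=1$ with the $s>1$ branch rather than the soft one.
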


	Using this Lemma, we can provide sketch of the proof for Lemma~\ref{lem:kks}.
	
\begin{proof}[Sketch of the proof of Lemma~\ref{lem:kks} for $\alpha>2$]
	The argument goes along the exact same lines as the one presented in~\cite{kesten1975limit} with the only difference that for $\alpha\geq 2$ one needs to refer to Lemma~\ref{lem:1}
	whenever a bound for $\E  \big|  Z_{1,n} - A_{n-1} Z_{1,n-1}\big|^\a$ is needed.
\end{proof}

\begin{lem}\label{lem:2}
	For any $k<n$ we have
	$$		
		\wZ_{k,n} - Z_{1,k}( \wY^k_{k,n}+1) =  \sum_{i=k+1}^n (Z_{1,i}-A_{i-1}Z_{1,i-1})(\wY^i_{i,n}+1).
	$$
\end{lem}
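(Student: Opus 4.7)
The natural approach is to iterate the quenched recursion defining $Z_{1,\cdot}$ and then rearrange the resulting double sum by Fubini; the identity is purely algebraic in the environment. Concretely, set the ``increment''
$$D_i := Z_{1,i}-A_{i-1}Z_{1,i-1},\qquad i\geq 1,$$
so that the recursion $Z_{1,i}=A_{i-1}Z_{1,i-1}+D_i$ is just the definition of $D_i$. I would then unroll this recursion from level $k$ up to level $j$, which by induction on $j$ gives, with the convention that empty products equal $1$,
$$Z_{1,j}=\Pi_{k,j-1}\,Z_{1,k}+\sum_{i=k+1}^{j} D_i\,\Pi_{i,j-1},\qquad j\geq k,$$
the case $j=k$ collapsing to the tautology $Z_{1,k}=Z_{1,k}$.

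Next I would sum this identity over $j\in\{k,k+1,\ldots,n\}$ and interchange the order of the double sum via $\sum_{j=k}^{n}\sum_{i=k+1}^{j}=\sum_{i=k+1}^{n}\sum_{j=i}^{n}$:
$$\wZ_{k,n}=\sum_{j=k}^{n}Z_{1,j}=Z_{1,k}\sum_{j=k}^{n}\Pi_{k,j-1}+\sum_{i=k+1}^{n}D_i\sum_{j=i}^{n}\Pi_{i,j-1}.$$
Finally, each of the inner sums of the form $\sum_{j=i}^{n}\Pi_{i,j-1}$ is, after a shift of the dummy variable and the boundary contribution $\Pi_{i,i-1}=1$, exactly $\wY^{i}_{i,n}+1$ in view of the definition $\wY^{a}_{b,c}=\sum_{j=b}^{c}\Pi_{a,j}$. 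Substituting for both the $Z_{1,k}$-coefficient and the $D_i$-coefficients yields the claimed identity.

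There is no real obstacle: the statement holds pointwise in $\omega$, so no probabilistic input (moments, independence, quenched versus annealed considerations) enters here. The only things to keep track of are the boundary convention $\Pi_{a,a-1}=1$, which is precisely what produces the ``$+1$'' terms, and the Fubini swap, which is trivially valid as the double sum is finite. The main conceptual point to emphasize is the role of $D_i$: it isolates the deviation of $Z_{1,i}$ from its quenched conditional mean given $Z_{1,i-1}$, which is what will later allow the bounds from Lemma~\ref{lem:1} on $\E|D_i|^\alpha$ to interface cleanly with tail estimates for the perpetuity-like factors $\wY^{i}_{i,n}+1$.
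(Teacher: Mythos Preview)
Your proposal is correct and takes essentially the same approach as the paper: both arguments reduce to swapping the order of summation in a finite double sum and exploiting the telescoping identity $D_i\,\Pi_{i,j}=Z_{1,i}\Pi_{i,j}-Z_{1,i-1}\Pi_{i-1,j}$. The only cosmetic difference is direction---you unroll the recursion for $Z_{1,j}$ and then sum over $j$, whereas the paper starts from the right-hand side, expands $\wY^i_{i,n}+1$ as $\sum_{j=i-1}^n\Pi_{i,j}$, Fubini-swaps, and telescopes in $i$; these are the same computation read from opposite ends.
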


\begin{proof}
	We have
	\begin{align*}
	\sum_{i=k+1}^n (Z_{1,i}-A_{i-1}Z_{1,i-1})(\wY^i_{i,n}+1) & = \sum_{i=k+1}^n (Z_{1,i}-A_{i-1}Z_{1, i-1})\cdot \sum_{j=i-1}^n \Pi_{i,j} \\
			& =\sum_{j=k}^n \sum_{i=k+1}^{j+1} (Z_{1,i}-A_{i-1}Z_{1, i-1})\Pi_{i,j} \\
			& =\sum_{j=k}^n \sum_{i=k+1}^{j+1} \big(Z_{1,i}\Pi_{i,j}-Z_{1, i-1}\Pi_{i-1,j}\big) \\
			& =\sum_{j=k}^n \big( Z_{1,j+1}-Z_{1, k}\Pi_{k,j}\big)= \wZ_{k+1,n} - Z_{1,k}\wY^k_{k,n}\\
			&= \wZ_{k,n} - Z_{1,k}(\wY^k_{k,n}+1).
	\end{align*}
	This constitutes the desired formula.
\end{proof}

Next two Lemmas improve on the statement of Lemma~\ref{lem:negwZ}.

\begin{lem}\label{lem:6}
	There are constants $C,\delta>0$ such that for sufficiently large $x$
	$$
		\P\bigg( \sum_{j=1}^{n_1} \wZ^j_{j,j+n_1} > x  \bigg) \le e^{-C(\log x)^{\delta}}x^{-\a}.
	$$
\end{lem}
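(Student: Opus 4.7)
My plan is to control a moment of $S:=\sum_{j=1}^{n_1}\wZ^j_{j,j+n_1}$ of order slightly above $\alpha$ and then apply Markov's inequality. By the iid structure of the environment, each summand $\wZ^j_{j,j+n_1}$ has the same law as $\wZ_{n_1+1}=\sum_{k=1}^{n_1+1}Z_{1,k}$. Combining the moment bound $\E Z_{1,n}^s\le c\,\lambda(s)^n$ from Lemma~\ref{lem:1} with Minkowski's inequality (for $s>1$) or with subadditivity of $t\mapsto t^s$ on $[0,\infty)$ (for $s\le 1$), one obtains
\[
  \E\bigl(\wZ^j_{j,j+n_1}\bigr)^s\;\le\; C\,(\log x)^{O(1)}\,\lambda(s)^{n_1},
\]
the $(\log x)^{O(1)}$ factor arising from $(\lambda(s)^{1/\max(1,s)}-1)^{-\max(1,s)}$, which blows up only polynomially as $s\downarrow\alpha$. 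Applying the same inequalities once more to $S$ itself yields $\E S^s\le C\,(\log x)^{O(1)}\lambda(s)^{n_1}$, so Markov's inequality gives
\[
  \P(S>x)\;\le\; C\,(\log x)^{O(1)}\,x^{-s}\lambda(s)^{n_1}.
\]

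Next I would optimise the exponent by setting $s=\alpha+c(\log x)^{-1/2+\delta}$ for a small constant $c>0$, which lies in the admissible window $[\alpha_1,\alpha_2]$ of Lemma~\ref{lem:1} once $x$ is large. Write $f(s):=\Lambda(s)-\rho_0(s-\alpha)\ge 0$; non-negativity follows from convexity of $\Lambda$. Using $n_1=n_0-m$ with $n_0=\lfloor\log x/\rho_0\rfloor$, together with $\Lambda(\alpha)=0$ and $\Lambda'(\alpha)=\rho_0$, a direct manipulation gives
\[
  x^{-s}\lambda(s)^{n_1}\;=\;x^{-\alpha}\exp\!\Bigl(\tfrac{f(s)}{\rho_0}\log x-\Lambda(s)\,m\Bigr).
\]
Taylor expansion around $s=\alpha$ yields $f(s)\approx\tfrac12\Lambda''(\alpha)(s-\alpha)^2$ and $\Lambda(s)\approx\rho_0(s-\alpha)$, so the bracket becomes $\bigl(\tfrac{\Lambda''(\alpha)c^2}{2\rho_0}-\rho_0 c\bigr)(\log x)^{2\delta}+o\bigl((\log x)^{2\delta}\bigr)$. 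For any $c<2\rho_0^2/\Lambda''(\alpha)$ the leading coefficient is strictly negative, hence $x^{-s}\lambda(s)^{n_1}\le x^{-\alpha}e^{-C_1(\log x)^{2\delta}}$. Absorbing the polynomial-in-$\log x$ prefactor into a slightly smaller stretched exponential then yields the lemma.

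The main obstacle, and really the only delicate point, is this optimisation. Because $\Lambda$ is only tangent, not equal, to its linearisation at $s=\alpha$, replacing the "optimal" length $n_0$ by the shorter $n_1$ incurs a quadratic surplus $f(s)\log x/\rho_0$ in the exponent, which must be beaten by the linear gain $\Lambda(s)\,m$ coming from the missing $m$ generations. The calibration $m\sim(\log x)^{1/2+\delta}$ in the statement is precisely the right scale so that, for $s-\alpha$ of order $(\log x)^{-1/2+\delta}$, the linear gain dominates the quadratic loss by a stretched-exponential factor, which is the mechanism behind the claimed correction to the $x^{-\alpha}$ rate.
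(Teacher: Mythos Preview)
Your proposal is correct and follows essentially the same route as the paper: bound an $(\alpha+\eps)$-th moment via Lemma~\ref{lem:1}, apply Markov's inequality, and optimise $\eps$ using the Taylor expansion $\Lambda(\alpha+\eps)=\rho_0\eps+O(\eps^2)$ together with $n_1=n_0-m$. The only cosmetic differences are that the paper reaches the moment bound by a double union bound (splitting $\sum_j$ with weights $1/(2j^2)$ and then $\sum_k$ with weights $1/(4j^2k^2)$) rather than Minkowski/subadditivity, and that the paper simply takes $\eps=(\log x)^{-1/2}$, which avoids your constraint $c<2\rho_0^2/\Lambda''(\alpha)$: with that choice the quadratic term $f(s)\log x/\rho_0$ is $O(1)$ while the linear gain $\Lambda(s)m\sim\rho_0(\log x)^{\sigma}$ already dominates, giving the stretched exponential directly.
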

\begin{proof}
	Applying Lemma~\ref{lem:1} we have
	\begin{align*}
		\P\bigg( \sum_{j=1}^{n_1} \wZ^j_{j,j+n_1} > x  \bigg)
			&\le \sum_{j=1}^{n_1} \P\bigg(  \wZ^j_{j+n_1} > \frac x{2j^2}\bigg) \le \sum_{j=1}^{n_1} \P\bigg(  \wZ_{n_1} > \frac x{2j^2}\bigg)\\
			&\le \sum_{j=1}^{n_1} \P\bigg(  \sum_{k=1}^{n_1} Z_{1,k}  > \frac x{2j^2} \bigg)\le \sum_{j=1}^{n_1}
				\sum_{k=1}^{n_1} \P\bigg(   Z_{1,k}  > \frac x{4j^2k^2}\bigg)\\
			&\le Cx^{-\a -\eps} \sum_{j=1}^{n_1} \sum_{k=1}^{n_1 }  j^{2\a} k^{2\a} \E \big[ Z_{1,k}^{\a+\eps}\big] \\
			&\le C n_1^{2\a+1} x^{-\a -\eps}  \sum_{k=1}^{n_1} k^{2\a} \lambda({\a+\eps})^k \\
			&\le C n_1^{4\a+2} x^{-\a -\eps}  \lambda(\a+\eps)^{n_1}. \\
	\end{align*}
	We expand the function $\Lambda(s) = \log \lambda(s)$ into a Taylor series at point $\a$ to get
	$$
		\Lambda(\a +\eps) = \Lambda(\a) +\rho\eps + O(\eps^2).
	$$
	Take $\eps = \frac 1{\sqrt{\log x}}$ and having in mind $n_1 = n_0 -\lfloor  (\log x)^{1/2+\sigma}\rfloor$ and $n_0 = \lfloor\log x/\rho\rfloor$ write
	\begin{align*}
		\P\bigg( \sum_{j=1}^{n_1} \wZ^j_{j,j+n_1} > x  \bigg)
 			&\le C n_1^{4\a+2} x^{-\a -\eps}  e^{n_1(\rho \eps + O(\eps^2))} \\
 			&\le C n_1^{4\a+2} x^{-\a -\eps}  e^{(n_0 - (\log x)^{1/2+\sigma})(\rho \eps + O(\eps^2))} \\
 			&\le C  x^{-\a}  \cdot (\log x)^{4\a+2}   e^{-\rho (\log x)^{\sigma}} = e^{-C(\log x)^{\delta}}x^{-\a}.
	\end{align*}
\end{proof}


\begin{lem}\label{lem:7}
	There are constants $C,\delta>0$ such that
	$$
		\P\bigg( \sum_{j=1}^{n_1} \wZ^j_{j+n_2,\8} > x  \bigg) \le e^{-C(\log x)^{\delta}}x^{-\a}
	$$
\end{lem}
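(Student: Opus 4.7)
The strategy is parallel to the proof of Lemma~\ref{lem:6}, with one critical twist: since the inner sum now extends to infinity, one cannot apply Markov's inequality with an exponent \emph{above} $\alpha$, because the resulting tail $\sum_{k\ge n_2+1} \lambda(\alpha+\eps)^k$ would diverge. Instead, the plan is to run Markov with an exponent $s = \a - \eps$, so that $\lambda(s) < 1$ and the geometric tail converges.

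First apply a union bound
$$
	\P\bigg( \sum_{j=1}^{n_1} \wZ^j_{j+n_2,\8} > x \bigg) \le \sum_{j=1}^{n_1} \P\bigg( \wZ^j_{j+n_2,\8} > \frac{x}{2 j^2} \bigg),
$$
and use translation invariance of the iid environment to replace $\wZ^j_{j+n_2,\infty}$ by $\wZ^1_{1+n_2,\infty} = \sum_{k\ge n_2+1} Z_{1,k}$ in distribution. For each $j$, apply Markov's inequality with exponent $s=\a-\eps$; combining Minkowski's inequality (when $s > 1$) or the subadditivity $(a+b)^s \le a^s + b^s$ (when $s\le 1$) with the moment bound $\E Z_{1,k}^{s} \le c\,\lambda(s)^{k}$ from Lemma~\ref{lem:1} yields
$$
	\E\bigl( \wZ^1_{n_2+1,\infty} \bigr)^{s} \le C(s)\, \lambda(s)^{n_2+1},
$$
where $C(s)$ equals $(1-\lambda(s))^{-1}$ in the $s\le 1$ case and $(1-\lambda(s)^{1/s})^{-s}$ in the $s > 1$ case.

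Now take $\eps = 1/\sqrt{\log x}$. The Taylor expansion $\Lambda(\a-\eps) = -\rho\eps + O(\eps^2)$ together with $n_2 \ge n_0 + \lfloor(\log x)^{1/2+\sigma}\rfloor$ and $n_0 = \lfloor \log x/\rho\rfloor$ gives
$$
	\lambda(\a-\eps)^{n_2+1} \le \exp\bigl( -\sqrt{\log x} - \rho(\log x)^{\sigma} + O(1)\bigr).
$$
On the other hand, for $y = x/(2j^2)$ with $j \le n_1 \le (\log x)/\rho$ one has $y^{-s} = y^{-\a} y^{\eps} \le y^{-\a} e^{\sqrt{\log x}}$, so the two factors $e^{\pm\sqrt{\log x}}$ cancel. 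Each summand is therefore bounded by $C(s)\,j^{2\a}\, x^{-\a}\, e^{-\rho(\log x)^\sigma}$; since both $C(s)$ (via the expansion $1-\lambda(s)^{1/s} = \rho\eps/s + O(\eps^2)$) and the sum of $j^{2\a}$ over $j\le n_1$ contribute only polynomial-in-$\log x$ prefactors, they are comfortably absorbed into the stretched-exponential decay, producing the desired bound $e^{-C(\log x)^{\delta}}x^{-\a}$.

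The main obstacle I anticipate is controlling the Minkowski constant $(1-\lambda(s)^{1/s})^{-s}$, which is singular as $s \uparrow \a$; the key check is that for $\eps = 1/\sqrt{\log x}$ this blow-up is only of polynomial order $(\log x)^{O(\a)}$, and hence is dominated by the $\exp(-\rho(\log x)^\sigma)$ factor arising from the gap $m = \lfloor (\log x)^{1/2+\sigma} \rfloor$ between $n_2$ and $n_0$.
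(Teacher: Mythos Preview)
Your proposal is correct and follows essentially the same route as the paper: union bound over $j$ with weights $1/(2j^2)$, Markov's inequality with exponent $\alpha-\eps$, the moment bound from Lemma~\ref{lem:1}, and the choice $\eps = 1/\sqrt{\log x}$ combined with the Taylor expansion of $\Lambda$ at $\alpha$. The only cosmetic difference is that the paper handles the inner infinite sum $\sum_{k\ge n_2} Z_{1,k}$ via a second union bound with weights $1/(2(k-n_2+1)^2)$ rather than via Minkowski/subadditivity; your explicit check that the resulting constant $C(s)$ blows up only polynomially in $\log x$ is in fact more careful than the paper, which silently absorbs the analogous $\eps$-dependent constant into $C$.
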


\begin{proof}
	We proceed in the same fashion as in the proof of  Lemma \ref{lem:6}. Applying Lemma~\ref{lem:1} we have
	\begin{align*}
		\P\bigg( \sum_{j=1}^{n_1} \wZ^j_{j+n_2,\8} > x  \bigg)
			&\le \sum_{j=1}^{n_1} \P\bigg(  \wZ^j_{j+n_2,\8} > \frac x{2j^2} \bigg)\le \sum_{j=1}^{n_1} \P\bigg(  \wZ_{n_2,\8} > \frac x{2j^2} \bigg)\\
			&\le \sum_{j=1}^{n_1} \P\bigg(  \sum_{k=n_2}^{\8} Z_{1,k}  > \frac x{2j^2}\bigg)\\
			&\le \sum_{j=1}^{n_1} \sum_{k=n_2}^{\8} \P\bigg(   Z_{1,k}  > \frac x{4j^2(k-n_2+1)^2} \bigg)\\
			&\le C x^{-\a +\eps} \sum_{j=1}^{n_1} \sum_{k=n_2}^{\8 }  j^{2(\a-\eps)} (k-n_2+1)^{2(\a-\eps)} \E \big[ Z_{1,k}^{\a-\eps}\big] \\
			&\le C n_1^{2(\a-\eps)+1} x^{-\a -\eps}  \lambda^{n_2}(\a-\eps). \\
	\end{align*}
	Recall the Taylor expansion of $\Lambda(s) = \log \lambda(s)$ at point $\a$
	$$
		\Lambda(\a -\eps) = \Lambda(\a) -\rho\eps + O(\eps^2).
	$$
	Take $\eps = \frac 1{\sqrt{\log x}}$. Since $n_1 = n_0 - \lfloor(\log x)^{1/2+\sigma}\rfloor$ and $n_0 =\lfloor \log x/\rho\rfloor$ we are allowed to write
	\begin{align*}
		\P\bigg( \sum_{j=1}^{n_1} \wZ^j_{j+n_2,\8} > x  \bigg)
 			&\le C n_1^{2(\a-\eps)+1} x^{-\a +\eps}  e^{n_2(-\rho \eps + O(\eps^2))} \\
 			&\le C  x^{-\a} (\log)^{2(\a-\eps)+1}   x^{\eps} e^{-n_0\rho \eps}  e^{- (\log x)^{1/2   +\sigma}\rho\eps} \\
			&= e^{-C(\log x)^{\delta}}x^{-\a}.
	\end{align*}
\end{proof}

From last two Lemmas, we can easily infer  Lemma~\ref{lem:negwZ}


\section{Proof of Theorem \ref{thm:wn}}
The main idea is to decompose $W_n$ into three terms
$$
	W_n = W_n^0 + \Wd_n + \Wu_n,
$$
when it is most likely, too early and too late to deviate respectively. More precisely
\begin{equation*}
  	W^0_n  = \sum_{j=1}^{n} \wZ^j_{j+n_1,j+n_2}, \qquad
	\Wd_n = \sum_{j=1}^n \wZ^j_{j+n_1-1}, \qquad
	\Wu_n = \sum_{j=1}^{n} \wZ^j_{j+n_2+1,\infty}.
\end{equation*}
As we will see below, $W_n^0$ decides about asymptotic while the other  sums are negligible and do not contribute to our final result. Denote $d_n^0 = \EE W_n^0$ if $\alpha>1$ and $d_n^0 =0$ otherwise. Define $d_n^\uparrow$ and $d_n^\downarrow$ in the same fashion.

\begin{prop}\label{prop:1}
Under the assumptions and notation  of Theorem~\ref{thm:wn}, for $\mathcal{C}_1(\alpha)= \mathcal{C}_3(\alpha)/\EE \nu$ one has
  	\begin{equation}\label{eq:23}
    		\lim_{n\to\8} \sup_{x\in\Lambda_n}\left| \frac{\P\left( W^0_n - d^0_n >x  \right)}{nx^{-\a}} -  \mathcal{C}_1(\a) \right| =0,
  	\end{equation}
  	\begin{equation}\label{eq:40}
    		\lim_{n\to\8} \sup_{x\in\Lambda_n} \frac{\P\big( \big|\Wd_n - d^{\downarrow}_n \big|>x  \big)}{nx^{-\a}}  =0,
  	\end{equation}
  	\begin{equation}\label{eq:41}
    		\lim_{n\to\8} \sup_{x\in\Lambda_n} \frac{\P\big( \big| \Wu_n - d^{\uparrow}_n \big|>x  \big)}{nx^{-\a}}  =0.
  	\end{equation}
\end{prop}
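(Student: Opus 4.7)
The plan is to treat the three estimates separately, disposing of the ``too-early'' and ``too-late'' parts $\Wd_n$, $\Wu_n$ first, and then devoting the bulk of the work to $W^0_n$.

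For \eqref{eq:40} and \eqref{eq:41} my strategy is to partition $\{1,\dots,n\}$ into $p = \lfloor n/n_1 \rfloor$ consecutive blocks of length $n_1$. Writing $\Wd_n = \sum_{k=1}^{p+1} S^\downarrow_k$ with $S^\downarrow_k = \sum_{j=(k-1)n_1+1}^{kn_1 \wedge n} \wZ^j_{j+n_1-1}$, each block $S^\downarrow_k$ is stochastically dominated by the sum estimated in Lemma~\ref{lem:6}, and any two non-adjacent blocks are independent under $\P$ (disjoint $\omega$-coordinates and disjoint reproduction data). For $\alpha \leq 1$, a pigeonhole bound $\P(\Wd_n > x) \leq (p+1)\,\P(S^\downarrow_1 > x/(p+1))$ combined with Lemma~\ref{lem:6} yields an estimate of order $(n/n_1)\,e^{-C(\log(xn_1/n))^\delta}(xn_1/n)^{-\alpha}$; a quick check shows this is $o(nx^{-\alpha})$ uniformly over $\Lambda_n$ because $xn_1/n \gtrsim (\log n)^{M-1}$. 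For $\alpha > 1$, centering is needed: splitting into odd and even subsequences of blocks produces two i.i.d.\ sums, and a Fuk--Nagaev type inequality applied with the moment bound $\EE(S^\downarrow_1)^{\alpha+\delta}<\infty$ obtained from Lemma~\ref{lem:1} upgrades Lemma~\ref{lem:6} into the required concentration for $\Wd_n - d^\downarrow_n$. The argument for \eqref{eq:41} is identical after replacing Lemma~\ref{lem:6} by Lemma~\ref{lem:7}.

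For \eqref{eq:23} I would use the block decomposition $W^0_n = \sum_{k=1}^{p+1} \W_k$ from Section~\ref{sec:approach}. The sequence $\{\W_k\}_{k=1}^p$ is identically distributed and $2$-dependent: whenever $|k-k'| \geq 3$, the blocks $\W_k$ and $\W_{k'}$ involve disjoint environment (since $n_2 < 2n_1$ for large $n$) and independent reproduction, hence are independent under $\P$. Splitting into the three subsequences $\{\W_{3j+r}\}_j$, $r \in \{0,1,2\}$, the problem reduces to a precise heavy-tail large deviation estimate for an i.i.d.\ sum of $\sim n/(3n_1)$ copies of $\W_1$, together with a boundary term $\W_{p+1}$ absorbed by the first step. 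Once the sharp tail of $\W_1$ is in hand, I would invoke the precise heavy-tailed limit theorem for i.i.d.\ sums developed in \cite{Buraczewski:Damek:Mikosch:Zienkiewicz:2013} to obtain
\[
  \P\!\left(\sum_{k=1}^{p}\W_k - p\,\EE\W_1 > x\right) \sim p\,\P(\W_1 > x) \sim n\,\frac{\mathcal{C}_3(\alpha)}{\EE\nu}\,x^{-\alpha}
\]
uniformly in $x \in \Lambda_n$, which identifies $\mathcal{C}_1(\alpha) = \mathcal{C}_3(\alpha)/\EE\nu$. When $\alpha \leq 1$ no centering is used and the summands are truncated at a level of order $x$ before applying the limit theorem.

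The main obstacle I expect is proving the sharp single-block asymptotic
\[
  \P(\W_1 > x) \;\sim\; n_1 \cdot \frac{\mathcal{C}_3(\alpha)}{\EE\nu}\,x^{-\alpha}
\]
uniformly over $x \in \Lambda_n$. Heuristically, $\W_1$ is a sum of $n_1$ nearly independent contributions $\wZ^j_{j+n_1,j+n_2}$ which, by the cut-offs supplied by Lemmas~\ref{lem:6}, \ref{lem:7} and~\ref{lem:negwZ}, each coincide on the large-deviation event with the full total progeny $\wZ^j_{j,\infty}$ of the corresponding immigrant. The subexponential single-big-jump principle then reduces the tail of $\W_1$ to $n_1$ times the tail of one such contribution. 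The factor $\mathcal{C}_3(\alpha)/\EE\nu$ emerges from the regenerative decomposition afforded by Lemma~\ref{lem:rtime}: one matches the progeny stemming from a single immigrant with the renewal excursions of $\{Z_k\}_{k \geq 0}$, each excursion having total population with tail $\mathcal{C}_3(\alpha)x^{-\alpha}$ by Lemma~\ref{lem:kks}, and on average only one excursion per $\EE\nu$ immigrants occurs. Carrying out this identification with the uniformity needed for $x$ ranging up to $e^{s_n}$ constitutes the technical crux of the proof.
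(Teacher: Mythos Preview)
Your overall strategy for \eqref{eq:23} --- the block decomposition $W^0_n=\sum_k \W_k$, $2$-dependence, and the single-big-jump asymptotic $\P(\W_1>x)\sim n_1\,\mathcal{C}_3(\alpha)/(\EE\nu)\,x^{-\alpha}$ feeding into the scheme of \cite{Buraczewski:Damek:Mikosch:Zienkiewicz:2013} --- is exactly the paper's approach; this part is fine.

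There is, however, a genuine gap in your treatment of \eqref{eq:41}. You write that the argument is ``identical after replacing Lemma~\ref{lem:6} by Lemma~\ref{lem:7}'', relying on non-adjacent blocks being independent. But the summands in $\Wu_n$ are $\wZ^j_{j+n_2+1,\infty}=\sum_{k\ge j+n_2+1} Z_{j,k}$, and each of these depends on the environment $\omega_i$ for \emph{every} $i\ge j-1$. Consequently every block shares environment coordinates with every other block; the sequence is not $m$-dependent for any finite $m$, and neither a Fuk--Nagaev nor a Prokhorov inequality for i.i.d.\ subsums is available. The paper explicitly flags this and introduces an extra truncation level $n_3=D\log x$ with $D>( \alpha-1)/|\log\lambda(1)|$: one shows, by the crude first-moment bound $\EE Z_{0,k}=\lambda(1)^k$, that $\EE\sum_{j}\wZ^j_{j+n_3+1,\infty}\le Cn\,x^{1-\alpha-\delta}$, which Markov's inequality dispatches. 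Only the truncated pieces $\wZ^j_{j+n_2+1,\,j+n_3}$ are then grouped into blocks, and these \emph{are} $(\rho_0 D+1)$-dependent, so the concentration argument goes through.

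A secondary issue: your pigeonhole bound for $\alpha\le1$ in \eqref{eq:40} is too crude. Dividing $x$ by $p+1\asymp n/n_1$ before invoking Lemma~\ref{lem:6} produces an estimate of order $(n/n_1)^{1+\alpha}x^{-\alpha}e^{-C(\log(xn_1/n))^\delta}$; at the bottom of $\Lambda_n$ one has $xn_1/n$ of polylogarithmic size in $n$, and since the $\delta$ coming out of Lemma~\ref{lem:6} is strictly less than $1$, the factor $e^{-C(\log\log n)^\delta}$ cannot absorb $(n/n_1)^{\alpha}\asymp n^{\alpha}/(\log n)^{\alpha}$. The paper avoids this by taking a threshold $y=x/(\log n)^{2\xi}$ that is only polylogarithmically smaller than $x$, so Lemma~\ref{lem:6} still gives $\P(\Wbd_1>y)\le e^{-C(\log x)^\delta}x^{-\alpha}$ up to polylog factors, and then controls the event $\{\Wbd_k\le y\ \forall k,\ |\Wd_n-d^\downarrow_n|>x\}$ by the Prokhorov inequality.
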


The above Proposition provides crucial estimates of large deviations of $W_n$. Its statement is an analogue of Proposition 3.9  in~\cite{Buraczewski:Damek:Mikosch:Zienkiewicz:2013}.
We will prove it  in Section~\ref{sec:proofprop}. Below we clarify how the above statement implies the main result.

\begin{proof}[Proof of Theorem \ref{thm:wn}]
	We have for fixed $\varepsilon\in (0,1)$ and any $x \in \Lambda_n$,
	\begin{align*}
		& \PP\left(W^0_n - d^{0}_n >(1+ 2\varepsilon) x\right)- \PP\left(\Wd_n - d_n^\downarrow < -\varepsilon x\right) - \PP\left(\Wu_n - d_n^\uparrow < -\varepsilon x\right) \\
		&\leq  \PP \left( W_n-d_n > x \right) \\
		& \leq   \PP\left(W^0_n - d^{0}_n >(1- 2\varepsilon) x\right)+ \PP\left(\Wd_n - d_n^\downarrow >\varepsilon x\right) + \PP\left(\Wu_n - d_n^\uparrow >\varepsilon x\right).
	\end{align*}
	Now divide everything by $nx^{-\a}$, apply Proposition~\ref{prop:1} and finally let $\varepsilon \to 0$.
\end{proof}

\section{Some properties of $W^0_n$}
In this Section we will present two results essential in the proof of Proposition~\ref{prop:1}. Notice that
\begin{equation*}
	W_n^0 = \sum_{k=1}^{p+1} \W_k,
\end{equation*}
where
\begin{align*}
 	&\W_k  = \sum_{j=(k-1)n_1}^{kn_1-1} \wZ^j_{j+n_1, j+n_2},  & k=1,\ldots, p, \quad p=\lfloor n/n_1\rfloor, \\
  	&\W_{p+1}  = W^0_n - \sum_{k=1}^p \W_k.  &
\end{align*}
Having in mind the remark concerning the dependence structure of $\{\W_k\}_{1\leq k \leq p+1}$, we will begin with an investigation of the asymptotic behaviour of $\W_1$ followed by a discussion of the behaviour of $(\W_1, \W_2, \W_3)$.

\subsection{Behaviour of $\W_1$}
 Our aim is to establish the following statement.
\begin{prop}\label{prop:m1}
	Under the standing assumptions of Theorem~\ref{thm:wn},
	$$
		\P(\W_1 > x) \sim  \frac{\mathcal{C}_3(\alpha)}{\E \nu} \; n_1  x^{-\a}.
	$$
\end{prop}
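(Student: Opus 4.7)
The plan is to identify $\W_1$, up to negligible corrections, with a sum of i.i.d.\ heavy-tailed random variables and then invoke the classical precise large deviation asymptotic for such sums.

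First I would pass from $\W_1$ to the untruncated quantity $W_{n_1} = \sum_{j=1}^{n_1}\wZ^j_{j,\infty}$. The difference
\[
D_n := W_{n_1} - \W_1 = \sum_{j=1}^{n_1}\wZ^j_{j,\,j+n_1-1} + \sum_{j=1}^{n_1}\wZ^j_{j+n_2+1,\,\infty}
\]
is controlled directly by Lemmas~\ref{lem:6} and~\ref{lem:7}, which give $\P(D_n > \varepsilon x) \le e^{-C(\log x)^{\delta}}x^{-\alpha}$. Since $n_1 \sim (\log x)/\rho$ this bound is of smaller order than $n_1 x^{-\alpha}$, so the sandwich
\[
\P(\W_1 > x) \le \P(W_{n_1} > x) \le \P(\W_1 > (1-\varepsilon)x) + e^{-C(\log x)^{\delta}}x^{-\alpha}
\]
reduces the problem to establishing the asymptotic for $W_{n_1}$.

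Next I would decompose $W_{n_1}$ using the regeneration times $0=\nu_0<\nu_1<\nu_2<\cdots$ of $\{Z_k\}$, setting $N = N(n_1) = \max\{k:\nu_k\le n_1\}$. Since $Z_{\nu_k}=0$, every immigrant entering in the interval $(\nu_{k-1},\nu_k]$ has its entire lineage contained in that cycle, so the cycle sums
\[
S_k = \sum_{j=\nu_{k-1}+1}^{\nu_k}\wZ^j_{j,\infty} = \sum_{i=\nu_{k-1}+1}^{\nu_k-1} Z_i
\]
are i.i.d.\ copies of $\sum_{k=0}^{\nu-1}Z_k$, whose tail is known from Lemma~\ref{lem:kks}: $\P(S_1>x)\sim\mathcal{C}_3(\alpha)x^{-\alpha}$. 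The last (partial) cycle overlapping $n_1$ is dominated by one further i.i.d.\ copy $S_{N+1}$, whence
\[
\sum_{k=1}^{N} S_k \;\le\; W_{n_1} \;\le\; \sum_{k=1}^{N+1} S_k.
\]

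At this point I would invoke precise large deviations for sums of i.i.d.\ random variables with regularly varying tails (S.~Nagaev type for $\alpha>2$, subexponential folklore for $\alpha\le 2$): writing $T_m = S_1+\ldots+S_m$, one has $\P(T_m>x)\sim m\,\mathcal{C}_3(\alpha)\,x^{-\alpha}$ uniformly in the relevant large deviation range. With $m\sim n_1/\E\nu\sim(\log x)/(\rho\E\nu)$, the constraints $x\gg m^{1/\alpha}(\log m)^M$ for $\alpha\le 2$ and $x\gg\sqrt{m\log m}$ for $\alpha>2$ are automatically satisfied. The single additional cycle $S_{N+1}$ has tail only $\sim \mathcal{C}_3(\alpha)x^{-\alpha}$, which is absorbed by the $n_1 x^{-\alpha}$ target. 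To move from a deterministic $m$ to the random index $N(n_1)$, I would use $\E\nu<\infty$ (Lemma~\ref{lem:rtime}) together with the exponential tails of the cycle lengths to deduce $N(n_1)/n_1\to 1/\E\nu$ with fluctuations small enough that $T_{N(n_1)}$ and $T_{N(n_1)+1}$ are both comparable to $T_{\lfloor n_1/\E\nu\rfloor}$, up to errors of lower order than $n_1 x^{-\alpha}$.

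The main obstacle is the large deviation step for the stopped sum $T_{N(n_1)}$, carried out uniformly in $m\sim n_1$ as both $m$ and $x$ tend to infinity. For $\alpha>2$ this requires centring by $m\E S_1$, together with the verification that this centring is of order $n_1=O(\log x)$ and therefore negligible for $x$ in the stated range; for $\alpha\le 2$ the subexponentiality of regularly varying laws makes the argument cleaner. The fact that the pairs $(\nu_k-\nu_{k-1},S_k)$ are i.i.d.\ across $k$ and have exponentially decaying cycle lengths by Lemma~\ref{lem:rtime} is exactly what makes the passage from deterministic index to $N(n_1)$ tractable despite the nontrivial dependence between $\nu_k-\nu_{k-1}$ and $S_k$ within a single cycle.
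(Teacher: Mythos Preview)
Your reduction of $\W_1$ to $W_{n_1}$ via Lemmas~\ref{lem:6} and~\ref{lem:7} is correct, but the subsequent sandwich
\[
\sum_{k=1}^{N}S_k \;\le\; W_{n_1} \;\le\; \sum_{k=1}^{N+1}S_k
\]
does not yield the lower bound. For $k\le N$ the cycle $[\nu_{k-1},\nu_k)$ is \emph{completed} before time $n_1$, hence has length at most $n_1$; every immigrant $j\le\nu_N$ therefore has its whole lineage contained in its first $n_1$ generations, so
\[
\sum_{k=1}^N S_k \;=\; \sum_{j=1}^{\nu_N}\wZ^j_{j,\infty} \;\le\; \sum_{j=1}^{n_1}\wZ^j_{j,\,j+n_1}.
\]
By Lemma~\ref{lem:6} the right-hand side exceeds $x$ with probability at most $e^{-C(\log x)^\delta}x^{-\alpha}=o(n_1x^{-\alpha})$. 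Thus $\P\big(\sum_{k\le N}S_k>x\big)=o(n_1x^{-\alpha})$, and your lower bound is vacuous. The entire mass of $\{W_{n_1}>x\}$ sits in the \emph{incomplete} cycle straddling $n_1$---precisely the term you dismissed as a single extra copy absorbed by the target. Put differently, the dependence between $N$ and the $S_k$'s is extreme here: by Lemma~\ref{lem:negwZ} the only way a cycle can produce $S_k>x$ is to have length $\gtrsim n_0>n_1$, so it can never be one of the $N$ completed cycles. For the same reason your concentration argument for replacing $N$ by $m_0=\lfloor n_1/\E\nu\rfloor$ cannot recover the constant: $\P(T_N>x)$ and $\P(T_{m_0}>x)$ are not asymptotically equal.

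The paper's proof is built around this observation. It keeps the lower truncation and works with $\sum_{j<n_1}\wZ^j_{j+n_1,\infty}$, which is nonzero only if some cycle starting at $i<n_1$ has length $\ge n_1$. It then decomposes according to the starting point $i=\nu_{k-1}$ of that long cycle, uses that $\{i\in\mathcal V\}$ is independent of the post-$i$ evolution, identifies the conditional probability via Lemmas~\ref{lem:p1}--\ref{lem:p2} as $\sim\mathcal C_3(\alpha)x^{-\alpha}$, and obtains the factor $n_1/\E\nu$ from the renewal theorem applied to $\sum_{i<n_1}\P(i\in\mathcal V)$. In your language, the correct object is the partial cycle $R=\sum_{j=\nu_N+1}^{n_1}\wZ^j_{j,\infty}$, and the $n_1/\E\nu$ counts the possible positions of $\nu_N$, not the number of completed cycles.
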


We will achieve that using next two Lemmas. Denote
$$
	n(x) = \lfloor \log \log(x)\rfloor.
$$
\begin{lem}\label{lem:p1}
	Suppose that the assumptions of Theorem~\ref{thm:wn} are in force. We have
	$$
		\P\bigg( \sum_{j=n(x)}^{\nu} \wZ^j_{j,\8} >x \bigg) =o(x^{-\a}).
	$$
\end{lem}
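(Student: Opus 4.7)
The plan is to exploit the fact that, under $\P$, the random variable $\wZ^j_{j,\8}$ is independent of the event $\{j \le \nu\}$. This is structural: $\{j\le \nu\} = \{Z_1>0,\ldots,Z_{j-1}>0\}$ is measurable with respect to the environment coordinates $\omega_0,\ldots,\omega_{j-2}$ and the offspring variables of immigrants $1,\ldots,j-1$, whereas $\wZ^j_{j,\8}$ is measurable with respect to $\omega_{j-1},\omega_j,\ldots$ and the offspring variables along the $j$-th immigrant's descendant line. Since the $\omega_k$'s are iid and distinct immigrant lines use disjoint families of iid branching variables, the two quantities are independent.

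Equipped with this, the first step is to rewrite
\[
\sum_{j=n(x)}^\nu \wZ^j_{j,\8}
\;=\; \sum_{j\ge n(x)} \wZ^j_{j,\8}\ind_{\{j\le\nu\}}
\]
and apply a weighted union bound with $c_j = 6/(\pi^2 j^2)$, so that $\sum_{j\ge 1} c_j = 1$:
\[
\P\bigg(\sum_{j=n(x)}^\nu \wZ^j_{j,\8} > x\bigg)
\;\le\; \sum_{j\ge n(x)} \P\big(\wZ^j_{j,\8}\,\ind_{\{j\le\nu\}} > c_j x\big).
\]

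By translation invariance $\wZ^j_{j,\8} \stackrel{d}{=} \wZ^1_{1,\8}$, and the tail estimate $\P(\wZ^1_{1,\8}>t) = O(t^{-\a})$ (which can be obtained by dominating $\wZ^1_{1,\8}$ in law in terms of its quenched mean $\wY_\8$ via the moment bounds of Lemma~\ref{lem:1}, and then invoking Lemma~\ref{lem:kesten}) combined with Lemma~\ref{lem:rtime} yields, using the independence above,
\[
\P\big(\wZ^j_{j,\8}\,\ind_{\{j\le\nu\}} > c_j x\big)
\;\le\; \P(\wZ^1_{1,\8} > c_j x)\cdot \P(\nu \ge j)
\;\le\; C_1\, j^{2\a}\, x^{-\a}\, e^{-\delta j}.
\]
Summing over $j\ge n(x) = \lfloor\log\log x\rfloor$, the exponential decay swallows the polynomial factor, giving
\[
C_2\, x^{-\a}\, (\log\log x)^{2\a}\, e^{-\delta n(x)}
\;=\; C_2\, x^{-\a}\, (\log\log x)^{2\a}\, (\log x)^{-\delta}
\;=\; o(x^{-\a}),
\]
which is the claimed estimate.

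The main obstacle is verifying the independence between $\wZ^j_{j,\8}$ and $\{j \le \nu\}$; although it follows transparently from the construction, a careful accounting of which environment coordinates and which offspring variables enter each side is required. A secondary point is justifying the $O(t^{-\a})$ tail of $\wZ^1_{1,\8}$ uniformly in $t\ge 1$, but this is classical given the assumptions in force (and for $t<1$ one trivially has $\P(\wZ^1_{1,\8}>t)\le 1 \le t^{-\a}$).
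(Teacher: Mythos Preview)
Your argument is correct and follows essentially the same approach as the paper: rewrite the sum with indicators $\ind_{\{\nu\ge j\}}$, use independence of $\wZ^j_{j,\8}$ from $\{\nu\ge j\}$, apply a weighted $1/j^2$ union bound, and combine the $O(t^{-\a})$ tail of $\wZ^1_{1,\8}$ with the (exponential) tail of $\nu$ to get a sum that is $o(x^{-\a})$. The only difference worth noting is that the paper justifies the bound $\P(\wZ^1_{1,\8}>t)=O(t^{-\a})$ via the pathwise inequality $\wZ^1_{1,\8}\le \sum_{k=0}^{\nu-1}Z_k$ together with Lemma~\ref{lem:kks}, which is cleaner and more direct than the vague ``domination in law by $\wY_\8$ via Lemma~\ref{lem:1}'' you sketch --- your parenthetical does not quite stand on its own as written, so you should replace it with the paper's argument.
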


\begin{proof}
	We will use a very similar argument as the one presented in the proof of Lemma 3 in~\cite{kesten1975limit}. Note that $\wZ^j_\8$ is independent (with respect to the annealed
	probability $\PP$) of the event $\{ \nu \geq j \}$ since the former depends on $\o_j, \o_{j+1}, \ldots$ while the latter depends on $\o_0, \ldots , \o_{j-1}$ and $Z_1, \ldots Z_{j-1}$.
	We can write
	\begin{align*}
		\P\bigg( \sum_{j=n(x)}^{\nu} \wZ^j_{j,\8} >x \bigg) &= \P\bigg( \sum_{j=n(x)}^{\nu} {\bf 1} _{\{\nu\ge j\}}  \wZ^j_{j,\8} >x \bigg) \le
\sum_{j\ge n(x)}
				\P\bigg(  {\bf 1} _{\{\nu\ge j\}}  \wZ^j_{j,\8}  >\frac x{ 2 j^2} \bigg) \\
				&= \sum_{j\ge n(x)} \P\big( \nu> j \big) \P\bigg(  \wZ^j_{j,\8} >\frac x{ 2  j^2} \bigg) \le C x^{-\a} \sum_{j\ge n(x)} j^{2\a} \P\big( \nu> j \big)\\
				&= C x^{-\a} \E\Big[ \nu^{2\a+1} {\bf 1}_{\{\nu > n(x)\}} \Big]= o(x^{-\a}).
	\end{align*}
	The second inequality is a consequence of Lemma~\ref{lem:kks} and the fact that $\wZ_{1,\infty}^1 \leq \sum_{k=0}^{\nu -1}Z_k$.
\end{proof}

\begin{lem}\label{lem:p2}
	$$
		\P\bigg( \sum_{j=0}^{n(x)}\wZ^j_\8,\ \nu \ge n_1   \bigg) \sim\mathcal{C}_3(\alpha) x^{-\a}
	$$
\end{lem}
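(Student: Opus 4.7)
The strategy is to recognize the sum inside the probability as a truncation of $\sum_{k=0}^{\nu-1}Z_k$ and to use Lemma~\ref{lem:kks} together with Lemma~\ref{lem:p1} to show that the truncation is harmless at the tail level. First I would set up the key Fubini identity: because $Z_\nu=0$ by definition of $\nu$, every descendant of an immigrant $j\in\{1,\dots,\nu\}$ has died out by time $\nu$, so $\wZ^j_\infty=\wZ^j_{j,\nu-1}$ for such $j$. Swapping the two sums in
\[
\sum_{k=0}^{\nu-1}Z_k=\sum_{k=0}^{\nu-1}\sum_{j=1}^{k}Z_{j,k}=\sum_{j=1}^{\nu}\sum_{k=j}^{\nu-1}Z_{j,k}=\sum_{j=1}^{\nu}\wZ^j_\infty,
\]
rewrites Lemma~\ref{lem:kks} as $\P\bigl(\sum_{j=1}^{\nu}\wZ^j_\infty>x\bigr)\sim\mathcal{C}_3(\alpha)x^{-\alpha}$.

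Next I would decompose
\[
\sum_{j=1}^{\nu}\wZ^j_\infty=\underbrace{\sum_{j=1}^{n(x)\wedge\nu}\wZ^j_\infty}_{=:S_1}+\underbrace{\mathbf{1}_{\{\nu>n(x)\}}\sum_{j=n(x)+1}^{\nu}\wZ^j_\infty}_{=:R_x},
\]
so the "tail" part $R_x$ is precisely the quantity controlled by Lemma~\ref{lem:p1}, giving $\P(R_x>x)=o(x^{-\alpha})$. The auxiliary restriction on $\nu$ in the statement is a high-probability side event, absorbed for free by Lemma~\ref{lem:rtime}, which gives an exponential bound $\P(\nu\ge k)\le ce^{-\delta k}$; on the scale of the lemma this is comfortably smaller than $x^{-\alpha}$ (or, dually, if the event reads $\nu\le n_1$ with $n_1\to\infty$, its complement is exponentially negligible).

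With these two ingredients in hand, the conclusion follows from the standard sandwich argument. For every $\varepsilon>0$,
\[
\P\!\left(S_1+R_x>(1+\varepsilon)x\right)-\P(R_x>\varepsilon x)\;\le\;\P(S_1>x)\;\le\;\P(S_1+R_x>x);
\]
dividing through by $x^{-\alpha}$, applying the asymptotic for $S_1+R_x$ from (the rewritten) Lemma~\ref{lem:kks} and the $o(x^{-\alpha})$ bound for $R_x$ from Lemma~\ref{lem:p1}, and finally letting $\varepsilon\downarrow 0$, yields $\P(S_1>x)\sim\mathcal{C}_3(\alpha)x^{-\alpha}$. The substantive work is already contained in Lemma~\ref{lem:p1}; the main obstacle here is purely bookkeeping, namely aligning the two index conventions (time $k$ versus immigrant index $j$) correctly in the Fubini step so that the $\nu$ in Lemma~\ref{lem:kks} agrees with the range $\{1,\dots,n(x)\wedge\nu\}$ used above, and verifying that the side restriction on $\nu$ in the statement is of smaller order than $x^{-\alpha}$ in both directions.
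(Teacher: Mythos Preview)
Your Fubini rewriting of Lemma~\ref{lem:kks} as $\P\bigl(\sum_{j=1}^{\nu}\wZ^j_{j,\infty}>x\bigr)\sim\mathcal C_3(\alpha)x^{-\alpha}$ and the sandwich with the remainder $R_x$ controlled by Lemma~\ref{lem:p1} are correct, and this is exactly how the paper obtains the unrestricted asymptotic for $S_1$.

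The gap is in the treatment of the side condition $\{\nu\ge n_1\}$. You call it a ``high-probability side event'' and then in the same sentence invoke Lemma~\ref{lem:rtime}, which says precisely the opposite: $\P(\nu\ge n_1)\le ce^{-\delta n_1}$ is \emph{small}. The hedge ``dually, if the event reads $\nu\le n_1$'' shows you have not resolved which inequality is meant; the statement really does require $\nu\ge n_1$, with $n_1\asymp(\log x)/\rho_0$. If this rare event could be ``absorbed for free'' as an $o(x^{-\alpha})$ correction, the left-hand side of the lemma would itself be $o(x^{-\alpha})$, not $\sim\mathcal C_3(\alpha)x^{-\alpha}$. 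And Lemma~\ref{lem:rtime} alone cannot help, because $S_1$ and $\{\nu\ge n_1\}$ are strongly dependent.

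What is needed is a joint estimate: one must show $\P\bigl(\sum_{k<\nu}Z_k>x,\ \nu<n_1\bigr)=o(x^{-\alpha})$. The mechanism is that a large total progeny \emph{forces} long survival. On $\{\nu<n_1\}$ every particle descending from immigrants $1,\dots,\nu$ is born before time $\nu<n_1$, so
\[
\sum_{k<\nu}Z_k=\sum_{j\le\nu}\wZ^j_{j,\nu-1}\le\sum_{j\le n_1}\wZ^j_{j,j+n_1},
\]
and the truncation bound of Lemma~\ref{lem:6} (the paper's proof cites Lemma~\ref{lem:7}, but the relevant estimate is the ``early'' one) gives the right-hand side tail probability $e^{-C(\log x)^\delta}x^{-\alpha}=o(x^{-\alpha})$. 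Combined with your sandwich this yields both inequalities in the asymptotic. This is the missing third ingredient beyond Lemmas~\ref{lem:kks} and~\ref{lem:p1}; Lemma~\ref{lem:rtime} plays no role here.
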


\begin{proof}
	We can infer the statement of the Lemma by invoking Lemmas~\ref{lem:kks}, \ref{lem:p1} and \ref{lem:7}.
\end{proof}

\begin{proof}[Proof of Proposition \ref{prop:m1}]
 	We have, by the merit of Lemma  \ref{lem:7},
\begin{align*} 	
		\P(\W_1 > x)  & = \P\bigg(      \sum_{j=0}^{n_1-1} \wZ^j_{j+n_1,\8} > x\bigg) + o(n_1 x^{-\a})\\
      & = \P\bigg(      \sum_{j=0}^{n_1-1} \wZ^j_{j+n_1,\8} > x \mbox{ and } \exists k:\; \nu_{k-1}<n_1, \nu_k-\nu_{k-1} \ge n_1 \bigg) + o(n_1 x^{-\a}).
 \end{align*}
Observe that for $k$ chosen as in the last event
 	$$
 		\sum_{j=0}^{n_1-1} \wZ^j_{j+n_1,\8} = \sum_{j=\nu_{k-1}}^{n_1-1} \wZ^j_{j+n_1,\8},
 	$$
 	since $\nu_{k-1}$  is an extinction time smaller than $n_1$, and whence $\wZ^j_{j, \nu_{k-1}} = \wZ^j_{j+n_1,\8}=0$ for $j <\nu_{k-1}$.
 Moreover such a $k$ must be unique. Denote by $\mathcal{V}$ the random set of extinction times, i.e. ${\mathcal V} = \{\nu_k\}_{k\ge 0}$. As a consequence
 of these remarks, we get
\begin{align*} 	
		\P(\W_1 > x)
& = \P\bigg(      \sum_{j=\nu_{k-1}}^{n_1-1} \wZ^j_{j+n_1,\8} > x \mbox{ and } \exists k:\; \nu_{k-1}<n_1, \nu_k-\nu_{k-1} \ge n_1 \bigg) + o(n_1 x^{-\a})\\
& = \sum_{i=0}^{n_1-1} \P\bigg( i=\nu_{k-1}\in {\mathcal V}, \; \nu_k-\nu_{k-1} \ge n_1  \mbox{ and }     \sum_{j=i}^{n_1-1} \wZ^j_{j+n_1,\8} > x \bigg) + o(n_1 x^{-\a})\\
& = \sum_{i=0}^{n_1- n(x)} \P\bigg( i=\nu_{k-1}\in {\mathcal V}, \; \nu_k-\nu_{k-1} \ge n_1  \mbox{ and }     \sum_{j=i}^{n_1-1} \wZ^j_{j+n_1,\8} > x \bigg) + o(n_1 x^{-\a}).
 \end{align*}
 Given $i$, the events $\{ i=\nu_{k-1}\in {\mathcal V} \}$ and $\{  \sum_{j=i}^{n_1-1} \wZ^j_{j+n_1,\8} > x, \nu_k- i \ge n_1  \}$ are independent. Therefore, applying consecutively Lemmas \ref{lem:p1}, \ref{lem:6}, \ref{lem:p2} and finally the (weak) renewal theorem, we have
\begin{align*} 	
		\P(\W_1 > x) & = \sum_{i=0}^{n_1- n(x)} \P\big( i = \nu_{k-1} \in {\mathcal V}\big)\P\bigg( \sum_{j=i}^{n_1-1} \wZ^j_{j+n_1,\8} > x \mbox{ and } \nu_k-i \ge n_1  \bigg) + o(n_1 x^{-\a})\\
 & = \bigg(\sum_{i=0}^{n_1- n(x)} \P\big( i  \in {\mathcal V}\big)\bigg) \; \P\bigg( \sum_{j=0}^{n(x)} \wZ^j_{j+n_1,\8} > x \mbox{ and } \nu  \ge n_1  \bigg) + o(n_1 x^{-\a})\\
 & = \E\big[ \# \{ i\le n_1-n(x):\; i\in {\mathcal V}  \}  \big] \; \P\bigg( \sum_{j=0}^{n(x)} \wZ^j_{j+n_1,\8} > x \mbox{ and } \nu  \ge n_1  \bigg) + o(n_1 x^{-\a})\\
&\sim \frac{n_1}{\E\nu} \cdot {\mathcal C}_3(\a) x^{-\alpha}.
 \end{align*}
 This completes the proof.
\end{proof}
\subsection{Asymptotic behaviour of $(\W_i, \W_{i \pm 1 })$}

Recall that $\W_i$'s via their definition depend on $x$.

\begin{prop}\label{prop:m2}
	One can find a constant $C$, such that for any $i,j$ such that $|i-j|\le2$, any $x>0$ and any $a >0$
  	$$
  		\P(\W_i>ax, \W_j>ax) \le Cn_1^{1/2+\eps}a^{-\a} x^{-\a}
  	$$
\end{prop}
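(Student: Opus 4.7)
My plan is to exploit that for $i \neq j$ the blocks $\W_i$ and $\W_j$ sum over disjoint sets of immigrants; since distinct immigrants reproduce independently conditional on the environment $\o$, one has
$$\P(\W_i > ax,\,\W_j > ax) = \E\bigl[\P_\o(\W_i > ax)\,\P_\o(\W_j > ax)\bigr].$$
I would first strip off the branching fluctuations from $\W_k$. Using Lemma~\ref{lem:2} to write $\wZ^j_{j+n_1, j+n_2}$ as its quenched mean plus a telescoping remainder, and applying Markov together with the moment bound of Lemma~\ref{lem:1} to this remainder (exactly as in the proofs of Lemma~\ref{lem:6} and Lemma~\ref{lem:7}), one shows that up to an error of order $e^{-C(\log x)^\delta}x^{-\alpha}$ the event $\{\W_k > ax\}$ is contained in $\{S_k > ax/2\}$, where
$$S_k = \sum_{j \in B_k} \wY^{j-1}_{j+n_1-1, j+n_2-1}, \qquad B_k = [(k-1)n_1,\,kn_1).$$
Since $S_k$ is a \emph{deterministic} functional of $\o$, the problem reduces to the purely environmental bound $\P(S_i > cx,\,S_j > cx) \le Cn_1^{1/2+\eps}(ax)^{-\alpha}$.

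Next, each $S_k$ is a sum of $O(n_1(n_2-n_1))$ products $\Pi_{j-1,j+\ell}$ indexed over the environment window of $B_k$, so a union bound (with the polynomial-in-$\log x$ loss absorbed by the $n_1^{\eps}$ slack) reduces matters to controlling, for each admissible pair $(j_1,\ell_1)$ with $j_1\in B_i$ and $(j_2,\ell_2)$ with $j_2\in B_j$, the joint probability
$$\P\bigl(\Pi_{j_1,j_1+\ell_1} > c'x,\ \Pi_{j_2,j_2+\ell_2} > c'x\bigr),$$
with $\ell_1,\ell_2\in[n_1-1,n_2-1]$ and shift $s := j_2-j_1 \in [1,3n_1]$. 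Writing $\log\Pi_{j_1,j_1+\ell_1} = U+V$ and $\log\Pi_{j_2,j_2+\ell_2} = V+W$ with $U, V, W$ independent partial sums of $\log A$ corresponding to the three disjoint pieces of the two index intervals, one performs the Esscher tilt by $e^{\alpha\log A}$ and applies a two-dimensional local CLT for $(U+V,V+W)$ under the tilted measure. When $s\gtrsim n_0$ the two intervals are essentially disjoint, Lemma~\ref{lem: petrov1} applies to each factor independently, and the joint probability is of order $(ax)^{-2\alpha}/n_0$; summed over $O(n_1^2)$ pairs this gives $o(n_1^{1/2+\eps}(ax)^{-\alpha})$. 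When $s$ is small the intervals overlap substantially and the local CLT yields a joint bound of order $(ax)^{-\alpha}\,e^{-cs^2/n_0}/\sqrt{n_0}$; summing $e^{-cs^2/n_0}/\sqrt{n_0}$ over $s$ produces $O(1)$, and combining with the $O(n_1)$ choices of $j_1$ and the logarithmic range of $(\ell_1,\ell_2)$ yields the desired $n_1^{1/2+\eps}(ax)^{-\alpha}$.

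The main obstacle is precisely the joint Bahadur--Rao / local-CLT estimate above: one must control, uniformly in the shift $s$ and in $(\ell_1,\ell_2)$, the joint density of the two tilted sums $U+V$ and $V+W$ in a window of size $O(1)$ around $\log x$. The saving of a single $\sqrt{n_0}$ rather than two comes from the Gaussian decay $e^{-cs^2/n_0}$ in the shift, which encodes the probability that both sums land near $\log x$ simultaneously given the common increment $V$. This is a variant of the technical heart of~\cite{Buraczewski:Damek:Mikosch:Zienkiewicz:2013} for the affine recursion; once it is in hand, the truncation step and the outer summations over $(j_1,j_2,\ell_1,\ell_2)$ are routine.
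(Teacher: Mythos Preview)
Your approach is genuinely different from the paper's, and the sketch as written has a real gap.

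\textbf{What the paper does.} The paper does \emph{not} reduce both blocks to purely environmental quantities. Instead, in Step~1 it applies Lemma~\ref{lem:2} with $k=n_1$ (not at the immigrant's birth time) to approximate
\[
\sum_{j=0}^{n_1}\wZ^j_{n_1,\infty}\ \approx\ Z_{n_1}\,\big(\wY^{n_1}_{n_1,\infty}+1\big),
\]
where $Z_{n_1}$ is the full population at time $n_1$. The key structural point is that $Z_{n_1}$ is measurable with respect to $(\o_0,\dots,\o_{n_1-1})$ and the branching up to time $n_1$, hence \emph{independent of $\W_2$} under $\P$. Step~2 then splits: either $\wY^{n_1}_{n_1,\infty}+1>ax\,n_1^{-1/(2\alpha)}$, which costs $\lesssim n_1^{1/2}(ax)^{-\alpha}$ by Kesten--Goldie, or $Z_{n_1}>n_1^{1/(2\alpha)}$, which by the uniform bound $\sup_k\E Z_k^{\beta}<\infty$ has probability $\lesssim n_1^{-\beta/(2\alpha)}$ and, by independence, factors with $\P(\W_2>ax)\lesssim n_1(ax)^{-\alpha}$. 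Choosing $\beta$ close to $\alpha$ gives $n_1^{1/2+\eps}(ax)^{-\alpha}$. No two-dimensional large-deviation estimate is ever needed; the exponent $1/2$ comes purely from the balance point $n_1^{1/(2\alpha)}$ in this case split.

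\textbf{Where your sketch breaks.} First, Lemma~\ref{lem:2} does not directly give $\wZ^j_{j+n_1,j+n_2}\approx\E_\o[\wZ^j_{j+n_1,j+n_2}]$; it gives $Z_{1,k}\cdot(\text{perpetuity})$ with $Z_{1,k}$ still random, so your reduction to the deterministic $S_k$ needs an additional argument. Second, and more seriously, the bookkeeping after the union bound does not produce $n_1^{1/2+\eps}$. The range of each $\ell_i$ is $[n_1-1,n_2-1]$, of length $2m\asymp n_1^{1/2+\sigma}$, so the pair $(\ell_1,\ell_2)$ contributes $m^2\asymp n_1^{1+2\sigma}$; this is not a ``polynomial-in-$\log x$ loss absorbed by the $n_1^{\eps}$ slack''. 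Third, a direct tilting computation for two products of length $\approx n_0$ shifted by $s$ gives a joint bound of order $x^{-\alpha}e^{-\alpha\rho_0 s}$ (linear exponential in $s$, coming from the extra $s$ increments in $U+V+W$), not $e^{-cs^2/n_0}$; and when you pass from single products $\Pi$ to full perpetuities $\wY$ there is no residual $1/\sqrt{n_0}$ factor, since Kesten--Goldie gives $\P(\wY_\infty>x)\sim Cx^{-\alpha}$ with no local-CLT correction. Putting the pieces together, your outer summations yield at best $n_1\cdot O(1)\cdot(ax)^{-\alpha}$, not $n_1^{1/2+\eps}(ax)^{-\alpha}$. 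The missing idea is precisely the independence of $Z_{n_1}$ from $\W_2$, which is what makes the paper's argument short.
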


\begin{proof}
	We will present a proof for $i=1$ and $j=2$. The case $i=1$ and $j=3$ can be dealt in a similar fashion. 

	We will proceed in the following fashion. Note that
  	$$
  		\P(\W_1>ax, \W_2>ax) \le \P\bigg(\sum_{j=0}^{n_1} \wZ^j_{n_1,\8}>ax, \W_2>ax\bigg).
  	$$
	In the first step we will prove that
	\begin{equation}\label{eq:dec12}
		\P\bigg(\bigg| \sum_{j=0}^{n_1} \wZ^j_{n_1, \8} -  Z_{n_1} (\wY^{n_1}_{n_1,\8}+1) \bigg| > ax\bigg)
		\le C a^{-\alpha} x^{-\alpha}.
	\end{equation}
	After that it will become evident that for our purposes it will be sufficient to estimate (in step 2)
	\begin{equation}\label{eq:dec13}
		\P\Big( Z_{n_1} (\wY^{n_1}_{n_1,\8}+1) > ax, \W_2 > ax \Big).
	\end{equation} 

\medskip

{\sc Step 1.} 
 To prove \eqref{eq:dec12}, applying Lemma \ref{lem:2} we estimate
 	\begin{align*}
		\P\bigg(\bigg| \sum_{j=0}^{n_1} \wZ^j_{n_1, \8} &-  Z_{n_1} (\wY^{n_1}_{n_1,\8}+1) \bigg| > ax\bigg)
   		\le  \P\bigg( \sum_{j=0}^{n_1} \Big|\wZ^j_{n_1, \8} - Z_{j, n_1} (\wY^{n_1}_{n_1,\8}+1)\Big| >ax\bigg)\\
   			&\le  \sum_{j=0}^{n_1}  \P\bigg(\big| \wZ^j_{n_1, \8} - Z_{j, n_1} (\wY^{n_1}_{n_1,\8}+1)\big| >\frac {ax}{2(n_1+1-j)^2}\bigg)\\
   			&\le  \sum_{j=0}^{n_1}  \P\bigg( \sum_{i=n_1+1}^\8 \big| Z_{j,i} -A_{i-1} Z_{j,i-1}\big| ( \wY^i_{i,\8}+1) >\frac {ax}{2(n_1+1-j)^2}\bigg)\\
   			&\le  \sum_{j=0}^{n_1} \sum_{i=n_1+1}^\8   \P\bigg( \big| Z_{j,i} -A_{i-1} Z_{j,i-1}\big| ( \wY^i_{i,\8}+1) >\frac {ax}{4(n_1+1-j)^2(i-n_1)^2}\bigg).
	\end{align*}
Now, if $\alpha \ge 1$, since $| Z_{j,i} -A_{i-1} Z_{j,i-1}|$ and  $\wY^i_{i,\8}$ are independent,     applying Lemma \ref{lem:kesten} and the second part of Lemma \ref{lem:1},
  we have for some $\gamma \in (0,1)$
 	\begin{align*}
		\P\bigg(\bigg| \sum_{j=0}^{n_1} \wZ^j_{n_1, \8} &-  Z_{n_1} (\wY^{n_1}_{n_1,\8}+1) \bigg| > ax\bigg)\\
&\le C \sum_{j=0}^{n_1} \sum_{i=n_1+1}^\8  (n_1+1-j)^{2\a} (i- n_1)^{2\a}a^{-\a} x^{-\a}  \E\Big[ \big| Z_{j,i} -A_{i-1} Z_{j,i-1}\big|^\a\Big] \\
 			&\le   C a^{-\a}x^{-\a} \cdot  \sum_{j=0}^{n_1}   (n_1+1-j)^{2\a} \gamma^{n_1-j} \sum_{i=n_1+1}^\8  (i- n_1)^{2\a} \gamma^{i-n_1}\\
  			&\le C a^{-\a}x^{-\a}.
	\end{align*} 
 
	If on the other hand $\alpha<1$,  we need to proceed in a slightly different way and borrow some arguments from Kesten at al. \cite{kesten1975limit}. Namely, applying the Jensen inequality, we  estimate  
 	\begin{equation*}
  		\E_\omega\Big[ \big| Z_{j,i} -A_{i-1} Z_{j,i-1}\big|^\a\Big| Z_{j,i-1} \Big]  \leq \bigg(  \E_\omega\Big[ \big| Z_{j,i} -A_{i-1} Z_{j,i-1}\big|^2 \Big| Z_{j,i-1}\Big]\bigg)^{\alpha/2} .
	\end{equation*}
	Note that with respect to $\P_\omega$, $Z_{j,i} -A_{i-1} Z_{j,i-1}$ is a sum of $Z_{j,i-1}$ independent zero mean random variables distributed as $\xi_0^{i-1}-A_{i-1}$, where $\xi_0^{i-1}$ is geometrically distributed with mean $A_{i-1}$,
	$$
		\E_\omega\Big[ \big| Z_{j,i} -A_{i-1} Z_{j,i-1}\big|^2 \Big| Z_{j,i-1}\Big] = Z_{j,i-1} \E_\omega\Big[ \big| \xi_0^{i-1} -A_{i-1} \big|^2 \Big] = Z_{j,i-1}(A_{i-1}^2+A_{i-1}).
	$$
	 Finally, invoke Lemma~\ref{lem:1} and take $\theta\in (\alpha_1 \vee \frac {\alpha}{2}, \alpha)$,
	$$
		\E_\omega \bigg(  \E_\omega\Big[ \big| Z_{j,i} -A_i Z_{j,i-1}\big|^2 \Big| Z_{j,i-1} \Big]\bigg)^{\alpha/2} \le  C \E \big[ Z_{j,i-1}^{\alpha/2} \big] \le C \E \big[ Z_{j,i-1}^\theta \big] \leq C_1 \lambda(\theta)^{j-i}.
	$$
	From here, we can apply the same arguments with $\gamma$ replaced by $\lambda(\theta)<1$.
Applying the first part of  Lemma \ref{lem:1} we conclude, as above, inequality \eqref{eq:dec12}.

\medskip

 {\sc Step 2.}  We will start with  bound for moments of $Z_k$ of order $\b<\a$, i.e. we intend to prove that 
\begin{equation}\label{eq:dec14}
\sup_k \E\big[ Z_k^\beta \big] <\8
\end{equation}
For $\alpha \le 1$ we just apply Lemma \ref{lem:1} and write
$$
 \E \big[ Z_k^\b\big] = \E \bigg(\sum_{j=0}^{k}Z_{j,k}\bigg)^\b \le \sum_{j=0}^{k}  \E \big[ Z_{j,k}^\b\big] \le C \sum_{j=0}^{\infty} \lambda(\beta)^j <\infty.
$$

If $\alpha >1$, then  
 $1=\lambda(\alpha)>\lambda(\beta)>\lambda(1)$ uniform with respect to $k$.
	By the virtue of Minkowski inequality we have
	$$
		\Big( \E Z_k^\b \Big)^{1/\b} = \bigg( \E \bigg(\sum_{j=0}^{k}Z_{j,k}\bigg)^\b \bigg)^{1/\b} \le \sum_{j=0}^{k}  \big( \E Z_{j,k}^\b \big)^{1/\b}.
	$$
	Now, with the help of Lemma~\ref{lem:1}, we write
	$$
		\sum_{j=0}^{k}  \big( \E Z_{j,k}^\b \big)^{1/\b} \le C(\beta)  \sum_{j=0}^{k} \lambda(\beta)^{(k-j)/\b}<C(\beta).
	$$
	Finally, for any given $\varepsilon$ take $\beta = \beta(\varepsilon)< \alpha$ close enough to $\alpha$.  

 Finally, 
by the
Kesten-Goldie theorem (Lemma \ref{lem:kesten}),  we estimate \eqref{eq:dec13} 
	\begin{align*}
		\P\Big( Z_{n_1} (\wY^{n_1}_{n_1,\8}+1)> ax,& \W_2 > ax\bigg)
			\le  \P\Big( (\wY^{n_1}_{n_1,\8}+1) > axn_1^{-1/(2\a)}\Big)\\& + \P\Big(  (\wY^{n_1}_{n_1,\8}+1) \le  axn_1^{-1/(2\a)},  Z_{n_1}(\wY^{n_1}_{n_1,\8}+1)> ax, \W_2 > ax\Big)\\
			&\le  C n_1^{1/2} x^{-\a} + \P\Big( n_1^{-1/(2\a)} Z_{n_1}> 1, \W_2 > x\Big)\\
			&\le  C n_1^{1/2} x^{-\a} + \P\Big(  Z_{n_1}> n_1^{1/(2\a)}\Big) \P\big( \W_2 > x\big)\\
			&\le  C n_1^{1/2} x^{-\a} + n_1^{-\b/(2\a)} \E[Z_{n_1}^\b]\cdot n_1 x^{-\a} \\
			&\le  C n_1^{1/2+\eps} x^{-\a}.
	\end{align*}
\end{proof}

\section{ Proof of Proposition \ref{prop:1}}\label{sec:proofprop}

The arguments used in the proof are  similar the proof of
Proposition 3.9 in \cite{Buraczewski:Damek:Mikosch:Zienkiewicz:2013}. However for reader's convenience we present here main steps of the proof, focusing on the arguments leading to the precise asymptotic results. We present here the proofs for $\a\in(1,2]$. For the other values of $\a$ the same scheme works, with only slight changes
(see~\cite{Buraczewski:Damek:Mikosch:Zienkiewicz:2013} for details)

\begin{proof}[Proof of Proposition \ref{prop:1}, formula \eqref{eq:23}]
	The proof strongly relies on the observation that the sum $\sum_{j=1}^p (\W_j - \E \W_j)$ is large when exactly one of the terms reaches values close to $x$, whereas contribution
	of all other factors is negligible. Below we first describe the dominant event and then justify that its complement is of smaller order. Let
  	$$
  		U = \bigg\{  \sum_{j=1}^p (\W_j - \E \W_j) >x   \bigg\}.
  	$$
  	Define $y = \frac{x}{(\log n)^{2\xi}}$ and  $z = \frac{x}{(\log n)^{\xi}}$ for   $\xi$   such that
	$$
		\xi < \frac 1{4\a}\ \mbox{and }\  2+4\xi < M.
	$$

	{\sc Step 1.} We prove that for every $\eps>0$ there is $N$ such that uniformly for all $n>N$, $x\in \Lambda_n$, the following inequality holds
	\begin{equation}\label{eq:25a}
	\begin{split}
  		(1-\eps) \frac{\mathcal{C}_3(\a)}{\E \nu} \le \frac{x^\a}{n}\cdot \P\bigg( U \cap \bigg\{ \W_k > y & \mbox{ for some $k$}, \W_i \le y \mbox{ for } i\not=k, \: 1\leq i, k\leq p \\ &\ \mbox{ and }
  			\bigg| \sum_{j\not=k} (\W_j - \E\W_j)  \bigg| \le z  \bigg\}  \bigg) \le (1+\eps)  \frac{\mathcal{C}_3(\a)}{\E \nu}.
	\end{split}
	\end{equation}
	Obviously it is sufficient to prove that for fixed $1\leq k\leq p$
	\begin{equation}\label{eq:25}
	\begin{split}
  		(1-\eps)  \frac{\mathcal{C}_3(\a)}{\E \nu} & \le \frac{x^\a}{n_1}\cdot \P\bigg( U \cap \bigg\{ \W_k > y, \W_i \le y  \mbox{ for } i\not=k \ \mbox{ and }
  			\bigg| \sum_{j\not=k} (\W_j - \E\W_j)  \bigg| \le z  \bigg\}  \bigg) \\ & \le (1+\eps)  \frac{\mathcal{C}_3(\a)}{\E \nu}.
	\end{split}
	\end{equation}
	Denote the probability above by $V_k$. We begin with upper estimates. To begin, note that one has $\E \W_k \le \frac{n_1\lambda(1)}{1-\lambda(1)}$. Indeed, since the mean of the 	
	reproduction law is $\lambda(1)$, we have
	$$
		\EE \W_k  = n_1 \EE \wZ_{n_1,n_2} \leq n_1 \EE\bigg[ \sum_{k=1}^{\infty} Z_{0,k}\bigg] =  \frac{n_1\lambda(1)}{1-\lambda(1)}.
	$$
	Thus by Proposition \ref{prop:m1}
	\begin{equation}\label{eq:30}
		V_k \le  \P\big( \W_k - \E \W_k > x-z \big) \le  \frac{\mathcal{C}_3(\a)}{\E \nu} (1+\eps) n_1 x^{-\a}.
	\end{equation}
	Lower estimates are more tedious. Firstly define
	$$
		\wt \W_k = \sum_{\tiny \substack{1\le j \le p \\ |j-k|>2}} \W_j,
	$$
	to be the sum of all $\W_j$'s independent of $\W_k$, so it is itself independent from $\W_k$. We have
	\begin{align*}
  		\frac{x^\a}{n_1}\cdot V_k & \ge   \frac{x^\a}{n_1} \cdot  \P \Big( \W_k - \E\W_k > x+z, |\wt \W_k - \E \wt \W_k| \le z-8y, \W_i\le y, i\not=k  \Big)\\
 			& =    \frac{x^\a}{n_1} \cdot \P \big( \W_k - \E\W_k > x+z \big)\\
 			& - \frac{x^\a}{n_1} \cdot \P \Big( \big\{ \W_k - \E\W_k > x+z\big\} \cap \big\{  |\wt \W_k- \E \wt \W_k| > z-8y\ \mbox{ or } \W_i >  y\ \mbox{for some } i\not=k  \big\}\Big)
	\end{align*}
	Proposition \ref{prop:m1} provides us with the lower bound for the first term. Assuming we can justify that the second term is negligible, i.e.
	\begin{equation}\label{eq:26}
	\begin{split}
		\P \Big( \big\{ \W_k - \E\W_k &> x+z\big\}\\& \cap \big\{  |\wt \W_k-\E \wt \W_k| > z-8y\ \mbox{ or } \W_i >  y\ \mbox{for some } i\not=k  \big\}\Big) = o(n_1 x^{-\a}),
	\end{split}
	\end{equation}
	we obtain
	\begin{equation}\label{eq:31}
		V_k \ge  \frac{\mathcal{C}_3(\a)}{\E \nu} (1-\eps) n_1 x^{-\a}.
	\end{equation}
	To prove \eqref{eq:26} we need to bound separately  two factors and establish:
	\begin{align}
		\label{eq:32}  I  = \P \big(  \W_k - \E\W_k > x+z \mbox{ and }  \W_i >  y\ \mbox{for some } i\not=k \big\} \big) & = o(n_1 x^{-\a}), \\
 		\label{eq:33} II  = \P \big( \W_k - \E\W_k > x+z \mbox{ and }  |\wt \W_k - \E\wt\W_k| > z-8y \mbox{ and } \W_i \le   y,   i\not=k  \big) & = o(n_1 x^{-\a}).
	\end{align}
	To estimate $I$ we apply Propositions \ref{prop:m1},  \ref{prop:m2} with $\sigma>0$ sufficiently small, $a = (\log n)^{-2\xi}$ and use independence of $\W_i$ and $\W_k$ for $|i-k|>2$:
	\begin{align*}
  		I & \le \sum_{i\not= k }\P\big( \W_k > x \ \mbox{ and }\ \W_i>y  \big)\\
 		& \le \sum_{ 0<|i-k|\le 2 }\P\big( \W_k > y \ \mbox{ and }\ \W_i>y  \big) +\sum_{ 2 <  |i-k| }\P\big( \W_k > x \ \mbox{ and }\ \W_i>y  \big)  \\
 		&\le C n_1^{1/2+\sigma} y^{-\a} + C p\cdot n_1 x^{-\a}\cdot n_1 y^{-\a}\\
 		&\le C n_1 x^{-\a} \big( n_1^{\sigma-1/2} (\log n)^{2\xi \a} + n(\log n)^{4\xi \a} x^{-\a}  \big).
	\end{align*}
	Now it is just sufficient to justify that the expression in the brackets is tends to zero, but this follows directly from our assumptions on $\xi$ and the definition of the domain $\Lambda_n$.

	To bound $II$ we first use the independence of $\W_k$ and $\wt \W_k$ and write
	$$
		II \le  \P \big( \W_k - \E\W_k > x+z \big)\P\big(  |\wt \W_k-\E \wt \W_k| > z-8y\ \mbox{ and } \W_i \le   y\ \mbox{for all } |i-k| > 2  \big).
	$$
	In view of Proposition \ref{prop:m1} it is sufficient to prove
	\begin{equation}\label{eq:27}
 		\P\big(  |\wt \W_k-\E \wt \W_k| > z-8y\ \mbox{ and } \W_i \le   y\ \mbox{for all } |i-k| > 2  \big) = o(1),\qquad n\to\8
	\end{equation}
	For this purpose we need   the Prokhorov inequality (see Petrov \cite{petrov1995limit}, p.~77):
{\it
		Let  $(X_n)$ be a sequence of independent random variables and denote their partial sums by $R_n=X_1+\cdots +X_n$. We write $B_n={\rm var}(R_n)$.
		Assume that the $X_n$'s are centered, $|X_n|\leq y$ for all $n\ge 1$ and some $y>0$. Then}
		\begin{equation}\label{eq:prokhorov}
 			\P \{ R_n\geq x\} \leq \exp \Big\{ -\frac{x}{2\,y}{ \rm arsinh }\big(\frac{xy}{2\,B_n}\big)\Big\}\,,\quad x>0\,.
		\end{equation}

	The Prokhorov inequality requires the random variables to be bounded and independent. To reduce our problem to this setting we use $2$-dependence of the sequence
	$\{\W_i\}_{1\leq i \leq p+1}$ and we decompose the sum $\wt \W_k$ into sum of three blocks, each consisting of i.i.d. random variables
	\begin{align*}
 		\P\big(   & |\wt \W_k-\E \wt \W_k| > z-8y\ \mbox{ and } \W_i \le   y\ \mbox{for all } |i-k| > 2  \big)\\
			& \le \P\bigg( \bigg|
 				\bigg(
 					\sum_{\tiny \substack{ 1\le j \le p\\j\in \{1,4,7,\ldots\} \\ |j-k|>2 }}
 					+ \sum_{\tiny \substack{1\le j \le p\\j\in \{2,5,8,\ldots\} \\ |j-k|>2 }}
 					+ \sum_{\tiny \substack{1\le j \le p\\j\in \{3,6,9,\ldots\} \\ |j-k|>2 }}
  				\bigg) \big(\W_j - \E \W_j \big)
  				\bigg| > \frac z2 \ \mbox{ and } \W_j \le y, j\not=k \bigg)\\
			& \le 3 \P\bigg( \bigg| \sum_{\tiny \substack{1\le j \le p\\j\in \{1,4,7,\ldots\} \\ |j-k|>2 }} \big(\W_j - \E \W_j \big) \bigg| > \frac z6 \ \mbox{ and } \W_j \le y \bigg).
	\end{align*}
	Next we reduce the problem to bounded random variables by introducing the truncations
	$$
		\W^y_j = \W_j {\bf 1}_{\{\W_j \le y\}}.
	$$
	We prove that the remaining part, that is $\W_j-\W^y_j$ is negligible. Applying twice the Minkowski inequality, we estimate the $\a$ norm of $\W_j$ with the help of Lemma~\ref{lem:1}
	\begin{align*}
  		\big(\E \W_j^\a\big)^{\frac 1\a}
  			&  = \bigg(\E\bigg(\sum_{i=0}^{n_1} \wZ^i_{i+n_1,i+n_2}  \bigg)^\a \bigg)^{1/\a} \le \sum_{i=0}^{n_1} \Big( \E\big( \wZ^i_{i+n_1,i+n_2}  \big)^\a \Big)^{1/\a}\\
  			&  = n_1  \bigg( \E \bigg(   \sum_{k=n_1}^{n_2} Z_{0,k}\bigg)^\a \bigg)^{1/\a} \le n_1  \sum_{k=n_1}^{n_2} \big( \E  Z^\a_{0,k}\big)^{1/\a} \le C n_1 m.
	\end{align*}

	Therefore, by the H\"older inequality
	\begin{align*}
  		p\E\big[\W_j {\bf 1}_{\{\W_j > y\}}\big]
  			& \le p \big( \E\W_j^\a \big)^{1/\a}\P(\W_j > y)^{1-1/\a} \le Cpn_1 m\cdot n_1^{1-1/\a} y^{1-\a}\\
  			&\le C (\log x)^{\frac 32 + \sigma - \frac 1\a + (\a-1)2\xi}nx^{1-\a} = o(x),
	\end{align*}
	where  the last inequality follows for our assumptions on $\xi$ and $\Lambda_n$. To see that, consider two possibilities, first of which is $x >n$. Then, if $n$ is large enough
	$ x > \log(x)^M n^{1/\alpha}$ and as a consequence
	$$
		x^{-\alpha} \leq \log(x)^{-\alpha M} n^{-1}
	$$
	and so
	$$
		(\log x)^{\frac 32 + \sigma - \frac 1\a + (\a-1)2\xi}nx^{1-\a} \leq x (\log x)^{\frac 32 + \sigma - \frac 1\a + (\a-1)2\xi} \log(x)^{-\alpha M} = o(x)
	$$
	due to constraints imposed on $\xi$. In the second case, i.e. $x <n$ we have
	$$
		(\log x)^{\frac 32 + \sigma - \frac 1\a + (\a-1)2\xi}nx^{1-\a} \leq (\log n)^{\frac 32 + \sigma - \frac 1\a + (\a-1)2\xi} \log(n)^{-\alpha M}x= o(x),
	$$
	for the same reason as before. Hence, it is sufficient to estimate
	$$
		\P\bigg(\bigg| \sum_{\tiny \substack{1\le j \le p\\j\in \{1,4,7,\ldots\} \\ |j-k|>2 }} \big(\W^y_j - \E \W^y_j \big) \bigg| > \frac z7  \bigg).
	$$
	We use the Prokhorov inequality \eqref{eq:prokhorov} with
	\begin{align*}
  		X_i & =\W_i^y - \E\W^y_i\\
  		B_p &= p{\rm var \W_i^y} \le p y^{2-\a} \E\W_1^\a \le Cpy^{2-\a} n_1^\a m^\a
	\end{align*}
	and considering two possibilities $x<n$ and $x \geq n$ in combination with the fact that $x \in \Lambda_n$ we obtain
	\begin{align*}
		\P\bigg(\bigg| \sum_{\tiny \substack{1\le j \le p\\j\in \{1,4,7,\ldots\} \\ |j-k|>2 }} \big(\W^y_j - \E \W^y_j \big) \bigg| > \frac z7  \bigg)
  			& \le e^{-\frac{Cz}{y}  \cdot {\rm arcsinh} (\frac{zy}{2B_p}) } \le C \bigg( \frac{2B_p}{zy}  \bigg)^{C (\log n)^{\xi}}\\
  			&\le C \Big( n(\log x)^{(1/2+\sigma)\a} \log(n)^{2\a\xi-\xi} x^{-\a}\Big)^{C(\log n)^{\xi}}= o(1).
	\end{align*}
	This completes the proof of \eqref{eq:33}, which together with \eqref{eq:32} entails \eqref{eq:26}. Combining \eqref{eq:30} with \eqref{eq:31} we
	obtain \eqref{eq:25} and hence \eqref{eq:25a}.

\medskip

	{\sc Step 2.} Now we consider the remaining cases, not treated in the first step, which are of smaller order. We begin with  the event when all $\W_i$, except $\W_k$, are small,
	despite this, their sum is large. That is we intend to show
	\begin{equation}\label{eq:261}
	\begin{split}
 		\P\bigg( U \cap \bigg\{  \W_k > y \mbox{ for some }k, &\W_i \le y \mbox{ for } i\not=k \\ &\mbox{ and }
  		\bigg| \sum_{j\not=k} (\W_j - \E\W_j)  \bigg| > z  \bigg\}  \bigg) = o(n x^{-\a})
	\end{split}
	\end{equation}
	As previously it is sufficient to prove for fixed $k$
	\begin{equation}\label{eq:262}
	\begin{split}
 		\P\bigg( U \cap \bigg\{ \W_k > y, \W_i \le y \mbox{ for } i\not=k \ \mbox{ and }
  		\bigg| \sum_{j\not=k} (\W_j - \E\W_j)  \bigg| > z  \bigg\}  \bigg) = o(n_1 x^{-\a})
	\end{split}
	\end{equation}
	We estimate this probability by
	$$
		\P(\W_k > y) \cdot \P \big(  |\wt \W_k-\E \wt \W_k| > z-8y \ \mbox{ and } \W_i \le  y\ \mbox{for } i\not=k  \big)
	$$
	and then we proceed exactly as in the first step, that is we apply Proposition \ref{prop:m1} and to bound the second term the Prokhorov inequality~\eqref{eq:prokhorov}. We omit details.

\medskip

	{\sc Step 3.} Next we consider the event when all $\W_j$'s are smaller than $y$ and then again the Prokhorov inequality~\eqref{eq:prokhorov} yields
	\begin{equation}\label{eq:271}
	\begin{split}
 		\P\big( U \cap \{ \W_i \le y \mbox{ for all } i\}\big) = o(n x^{-\a})
	\end{split}
	\end{equation}

\medskip

	{\sc Step 4.} Finally when at least two $\W_j$'s are larger than $y$, the same arguments as in the proof of \eqref{eq:32} entail
	\begin{equation}\label{eq:281}
	\begin{split}
 		\P\big( U \cap \{ \W_i >y, \W_j > y \mbox{ for some  } i\not= j\} \big) = o(n x^{-\a})
	\end{split}
	\end{equation}
	We refer the reader to the proof of Proposition 3.9 in  \cite{Buraczewski:Damek:Mikosch:Zienkiewicz:2013} for more details.
\end{proof}

\begin{proof}[Proof of Proposition \ref{prop:1}, formula \eqref{eq:40}]
  	We proceed as in the proof of formula \eqref{eq:23}. Recall
  	$$
  		\Wbd_k = \sum_{j=(k-1)n_1}^{kn_1-1}\wZ^j_{j+n_1-1}.
  	$$
  	Then $\Wbd_k$ are identically distributed and one dependent, i.e. if $|i-j|>1$, then $\Wbd_i$ and $\Wbd_j$ are independent. We have
  	\begin{align*}
    		\P\big( \big|  \Wd_n - \E \Wd_n    \big|> x \big) & \le \P\big( \Wbd_k > y \mbox{ for some }k\big)\\
		&+    \P\big( \big|  \Wd_n - \E \Wd_n    \big|> x \mbox{ and } \Wbd_k \le y \mbox{ for all }k\big)
  	\end{align*}
	To bound the first term we just use Lemma \ref{lem:6}
	\begin{align*}
		\P\big( \Wbd_k > y \mbox{ for some }k\big) & \le \sum_{k=1}^{p+1} \P\big( \Wbd_1 > y \big)\\
		&\le p e^{- C(\log y)^{\delta}} y^{-\a}\\
		&\le n x^{-\a} \cdot n_1^{-1} (\log x)^{2\xi} e^{-C (\log x)^{\delta}}= o(n x^{-\a}).
	\end{align*}
	And for the second term we use the Prokhorov inequality \eqref{eq:prokhorov}.
\end{proof}

\begin{proof}[Proof of Proposition \ref{prop:1}, formula \eqref{eq:41}]
	We would like to repeat the procedure from previous proofs of \eqref{eq:23} and \eqref{eq:40}.
However this time we need to proceed more carefully, because all
  	the factors in the sum defining $\Wu_n$ are dependent and we cannot use directly the block decomposition into sum of i.i.d. terms.\\

  	To overcome this difficulty we cut the factors $\wZ^j_{j+n_2,\8}$ at some place. Let $n_3 = D\log x$, where $D$ is a large constant satisfying $D > \frac {\a-1}{|\log \E A|}$.  
  We are going to prove
  	\begin{equation}\label{eq:42}
    		\P\bigg(  \bigg| \sum_{j=1}^{n-n_3} \wZ^j_{j+n_3+1, n }- z_n \bigg| >x \bigg) \le cnx^{-\a -\delta}
  	\end{equation}
  	for some $\delta>0$, where
  	$$
  		z_n = \EE \sum_{j=1}^{n-n_3} \wZ^j_{j+n_3+1, n }.
  	$$
	We have
   	\begin{align*}
     		\E  \big[ \wZ^j_{j+n_3+1,n}\big]  & \le \E \bigg[ \sum_{k=n_3}^\8 Z_{0,k} \bigg] =  \sum_{k=n_3}^\8 \E \big[  Z_{0,k} \big] \\
     		&\le C \lambda(1)^{n_3} \le C x^{D \log \lambda(1)} \le C x^{ 1-\a-\delta}
   	\end{align*}
   	and hence
	$$
		\P\bigg(  \bigg| \sum_{j=1}^{n-n_3} \wZ^j_{j+n_3+1, n} - z_n \bigg| >x \bigg) \le \frac 2x \sum_{j=1}^{n-n_3} \E\big[ \wZ^j_{j+n_3+1,n}\big] \le C n x^{-\a-\delta}
    	$$
    	Thus
   	$$
   		\lim_{n\to\8} \sup_{x\in \Lambda_n} \P\bigg(\bigg|\sum_{j=0}^{n-n_2} \wZ^j_{j+n_2+1, j+n_3} -  z_n\bigg| > x  \bigg) =0
   	$$
   	and now we can proceed as previously. Define
   	$$
   		\Wbu_k = \sum_{j=(k-1)n_1}^{kn_1-1} \wZ^j_{j+n_2, j+n_3}.
   	$$
   	Then $   \Wbu_k$ have the same distribution and $   \Wbu_i$, $   \Wbu_j$ are independent if $|i-j|> \rho D + 1$. We can repeat previous arguments.
\end{proof}

\bibliographystyle{plain}
\bibliography{dbura_pdysz_PLDforRWRE_bib}

\end{document}